\documentclass[onside,11pt,a4paper,reqno]{amsart}
\usepackage[left=1.4in, right=1.4in, top=1.5in, bottom=1.5in]{geometry}
\newtheorem{theorem}{Theorem}[section] 
\newtheorem{lemma}[theorem]{Lemma}
\newtheorem{proposition}[theorem]{Proposition} 
\newtheorem{remark}[theorem]{Remark} 
%%%%%%%%%%%%%%%%%%%%%%%%%%%%%%%%%%%%%%%%%%%%%%%%%%%%%%%%%%%%%%%%%%%%%%%%%%%%%%%%%%%%%%%%%%%%%%%%%%%%%%%%%%%%%%%% Shortcuts
\usepackage{amsmath,amssymb,mathrsfs,indentfirst,cases,enumerate,bbm,upgreek,bm} 
\usepackage{xcolor,ulem}  
\usepackage[linkcolor=blue,citecolor=cyan,colorlinks]{hyperref}
\setcounter{tocdepth}{2}
\numberwithin{equation}{section}
\allowdisplaybreaks
\renewcommand{\ge}{\geqslant}
\renewcommand{\le}{\leqslant}
\newcommand{\identity}{\mathbf{I}} % identity matrix 
\newcommand{\tangentdiv}{\overline{\nabla} \cdot} % tangent divergence
\newcommand{\Laplace}{\Delta} % laplace operator
\newcommand{\Divergence}{\operatorname{div}} % divergence
\newcommand{\parl}{\partial} % partial derivative 
\newcommand{\norm}[1]{\left\lVert #1 \right\rVert} % norm
\newcommand{\module}[1]{\left| #1 \right|} % module (absolute)
\newcommand{\Brace}[1]{\left\{ #1 \right\}} % { }
\newcommand{\Paren}[1]{\left( #1 \right)} % ( )
\newcommand{\Bracket}[1]{\left[ #1 \right]} % [ ]
\newcommand{\Anglebracket}[1]{\left< #1 \right>} % < >
\newcommand{\Zero}{\mathbf{0}} % bar zero
\newcommand{\vare}{\varepsilon}
\newcommand{\bPsi}{\boldsymbol{\Psi}}
\newcommand{\bPi}{\boldsymbol{\Pi}}
\newcommand{\bPhi}{\boldsymbol{\Phi}}
\newcommand{\Up}{{\Upsilon}}  
\newcommand{\eigv}{\zeta} 
\newcommand{\coeff}{\xi} 
\newcommand{\SSS}{\mathbf{S}} 
\newcommand{\DDD}{\mathbf{D}}
\newcommand{\VV}{\mathbf{v}}

\newcommand{\zz}{\mathbf{z}}
\newcommand{\ww}{\mathbf{w}}
\newcommand{\UU}{\mathbf{U}}
\newcommand{\xx}{\mathbf{x}}
\newcommand{\yy}{\mathbf{y}} 
\newcommand{\normal}{\boldsymbol{\nu}} % unit outer normal vector 
\newcommand{\bR}{\overline{R}} % outer radius 
\newcommand{\barsigma}{\bar{\sigma}} % outer surface tension
\newcommand{\surftenratio}{\sigma_{\operatorname{ratio}}} % surface tension ratio
\newcommand{\bV}{\overline{V}} % modified liquid volume
\newcommand{\Omp}{\Omega_{l}} % liquid region
\newcommand{\Tconst}{T_c} % liquid constant temperature
\newcommand{\Omn}{\Omega_{g}} % gas region
\newcommand{\ldens}{\rho_l} % liquid density
\newcommand{\gdens}{\rho} % gas density
\newcommand{\brho}{\bar{\gdens}} % gas density in unit ball   
\newcommand{\GASMASS}{\operatorname{M}} % gas mass
\newcommand{\GASCONST}{\mathfrak{R}} % ideal gas constant
\newcommand{\Heatcapa}{\mathfrak{c}} % specific heat capacity of the gas
\newcommand{\Thermal}{\kappa} % gas thermal conductivity
\newcommand{\Adi}{\gamma} % gas adiabatic constant
\newcommand{\meancurv}{H} % mean curvature 
\newcommand{\Errorrho}{\varrho} % error of density
\newcommand{\ErrorR}{\mathcal{R}} % error of radius 
\newcommand{\ptErrorR}{\dot{\ErrorR}} % partial t error of radius 
\newcommand{\ptR}{\dot{R}} % partial t inner radius
\newcommand{\pttR}{\ddot{R}} % partial tt inner radius 
\newcommand{\ptbR}{\dot{\bR}} % partial t outer radius 
\newcommand{\Energy}{E} % energy
\newcommand{\Linearpart}{\mathcal{L}} % linear part in ODE  
\newcommand{\Nonlpart}{\mathcal{N}} % nonlinear part in ODE  
\newcommand{\bkappa}{\chi} % a constant
\newcommand{\ssd}{\gdens_{\dagger}} % equilibrium density
\newcommand{\ssr}{R_{\dagger}} % equilibrium inner radius
\newcommand{\ssor}{\overline{R}_{\dagger}} % equilibrium outer radius
\newcommand{\tR}{\tilde{R}_{\dagger}} % a quantitiy equals \ssr-{\ssr^2}/{\ssor}
\newcommand{\hatd}{\hat{\gdens}} % local center manifold density
\newcommand{\hatr}{\hat{R}} % local center manifold radius
%%%%%%%%%%%%%%%%%%%%%%%%%%%%%%%%%%%%%%%%%%%%%%%%%%%%%%%%%%%%%%%%%%%%%%%%%%%%%%%%%%%%%%%%%%%%%%%%%%%%%%%%%%%%%%%% Shortcuts
\begin{document}
\title[Nonlinear and exponential stability of a gas-liquid problem]{Exponential stability of a free boundary problem with spherical symmetry for a gas bubble immersed in a bounded incompressible liquid}
\author{Chengchun Hao, Tao Luo, \and Siqi Yang}
\address{Chengchun Hao and Siqi Yang\newline
	HLM, Institute of Mathematics, Academy of Mathematics and Systems Science, Chinese Academy of Sciences, Beijing 100190, China\newline
	\and
	School of Mathematical Sciences, University of Chinese Academy of Sciences, Beijing 100049, China
	}
\address{Tao Luo\newline
	Department of Mathematics, City University of Hong Kong, Hong Kong, China}

\email{hcc@amss.ac.cn}
\email{taoluo@cityu.edu.hk}
\email{yangsiqi@amss.ac.cn}
\begin{abstract}
This paper is mainly concerned with the free boundary problem for an approximate model (for example, arising from the study of sonoluminescence) of a gas bubble of finite mass enclosed within a bounded incompressible viscous liquid, accounting for surface tensions at both the gas-liquid interface and the external free surface of the entire gas-liquid region. It is found that any regular spherically symmetric steady-state solution is characterized by a positive root of a ninth-degree polynomial for which the existence and uniqueness are proved and a one-to-one correspondence between equilibria and pairs of gas mass and liquid volume is established. We prove that these equilibria exhibit nonlinear and exponential asymptotic stability under small perturbations that conserve gas mass and liquid volume, and an equilibrium solution acts as a local minimizer of the energy functional, even under relatively large perturbations, with the proportionality constant determined by the adiabatic constant. Moreover, we construct a global center manifold to apply the center manifold theory. Our results apply to gases and liquids of all sizes. Furthermore, we derive the optimal exponential decay rate for small liquid volumes by analyzing the spectrum bounds of the associated linear operator and show that decreasing the gas mass or increasing the temperature can accelerate the convergence rate, a behavior not seen in unbounded liquid scenarios.
\end{abstract}
\maketitle
\tableofcontents

\section{Introduction}   

Consider the free boundary problem of a gas bubble immersed in an incompressible viscous liquid with finite gas mass and liquid volume. The external liquid dynamics are governed by the following incompressible Navier-Stokes equations: 
\begin{subnumcases}
{\label{eq-l-full}} 
\parl_t(\ldens\VV_l)+\Divergence(\ldens\VV_l\otimes\VV_l)=\Divergence(-p_l\identity+\SSS_l(\VV_l)),&\text{in $\Omp(t)$,}\label{eq-l-full-1}\\
\Divergence\VV_l=0,&\text{in $\Omp(t)$,}\label{eq-l-full-2}\\
\ldens c_l\Bracket{\parl_tT_l+\Paren{\VV_l\cdot\nabla}T_l}=\Divergence(\kappa_l\nabla T_l)+\SSS_l(\VV_l):\nabla\VV_l,&\text{in $\Omp(t)$,}\label{eq-l-full-3}
\end{subnumcases} 
where $\Omp(t)\subset\mathbb{R}^3$ is a bounded connected domain that varies with time. The constant $\ldens>0$ represents the density of the liquid. The variables $\VV_l,p_l$, and $T_l$ denote the velocity field, pressure, and temperature of the liquid, respectively. The tensor product is denoted by $\otimes$, and $\identity$ represents the identity matrix. The viscous stress tensor is given by $\SSS_l(\VV_l)=2\mu_l \DDD(\VV_l)$, where $\mu_l>0$ is the dynamic viscosity and $\DDD(\VV)$ is the symmetric part of the gradient of the velocity field, defined as $(\nabla\VV+(\nabla\VV)^\top)/2$. The constants $c_l$ and $\kappa_l$ denote the specific heat capacity and thermal conductivity of the liquid, respectively. Additionally, $\VV_l\cdot\nabla$ represents the directional derivative, and $\mathbf{A}:\mathbf{B}$ denotes the trace of the matrix product $\mathbf{A}\mathbf{B}^\top$.
%\eqref{eq-l-full-1}--\eqref{eq-l-full-3} express the momentum equation, conservation of mass, and temperature evolution, respectively. 
In the following, vectors, matrices, and tensors will be represented using bold typefaces. 

The internal gas is governed by the following compressible Navier-Stokes equations in a simply connected domain (see, e.g., \cite{Lai2023}):
\begin{subnumcases}{} 
\parl_t\gdens+\Divergence(\gdens\VV_g)=0,&\text{in $\Omn(t)$,}\label{eq-g-full-1}\\
\parl_t(\gdens\VV_g)+\Divergence(\gdens\VV_g\otimes\VV_g)=\Divergence\Paren{-p_g\identity+\SSS_g(\VV_g)},&\text{in $\Omn(t)$,}\label{eq-g-full-2}\\
\gdens T_g\Paren{\parl_ts+\VV_g\cdot\nabla s}=\Divergence(\Thermal\nabla T_g)+\SSS_g(\VV_g):\nabla\VV_g,&\text{in $\Omn(t)$,}\label{eq-g-full-3}\\
p_g=\GASCONST T_g\gdens,&\text{in $\Omn(t)$,}\label{eq-g-full-4}\\
s=\Heatcapa\log\Paren{{p_g}/{\gdens^\Adi}},&\text{in $\Omn(t)$,}\label{eq-g-full-5} 
\end{subnumcases}
where the variables $\gdens,\VV_g,p_g,T_g$, and $s$ denote the density, velocity field, pressure, temperature, and entropy per unit of mass of the gas, respectively. The viscous tensor 
\begin{equation*}
\SSS_g(\VV_g)=2\mu\Paren{\DDD(\VV_g)-\frac{1}{3}(\Divergence\VV_g)\identity}+\zeta(\Divergence\VV_g)\identity,
\end{equation*} 
where $\mu$ denotes the dynamic viscosity, and $\zeta$ represents the bulk viscosity.  The positive constants $\Thermal,\Heatcapa$ and $\GASCONST$ denote the thermal conductivity, specific heat capacity, and the ratio of the ideal gas constant to the molar mass, respectively. The adiabatic constant $\Adi=1+\GASCONST/\Heatcapa$, which is $5/3$ for monoatomic gases and $7/5$ for diatomic gases (see, e.g., \cite{Biro2000}), is also included. Moreover, \eqref{eq-g-full-4} and \eqref{eq-g-full-5} follow from Boyle's law, Joule's second law, and the second law of thermodynamics for ideal gases. 

We take into account the surface tension acting on the gas-liquid interface $\parl\Omn(t)$ as well as on the free boundary of the entire gas-liquid region, specifically $\parl\Omp(t)\setminus\parl\Omn(t)$. The boundary conditions on the gas-liquid interface read
\begin{subnumcases}
{\label{eq-lg-full}}  
\VV_l\cdot\normal=\VV_g\cdot\normal=v_{-},&\text{on $\parl\Omn(t)$,}\label{eq-lg-full-1}\\
\normal\cdot\Bracket{-p_l\identity+\SSS_l(\VV_l)-\Paren{-p_g\identity+\SSS_g(\VV_g)}}=-\sigma\meancurv\normal,&\text{on $\parl\Omn(t)$,}\label{eq-lg-full-2}\\
T_g=T_l,&\text{on $\parl\Omn(t)$,}\label{eq-lg-full-3} 
\end{subnumcases}
where $\normal$ denotes the unit outer normal to $\parl\Omn(t)$, and $v_{-}$ is the normal velocity of the interface. The mean curvature is given by $\meancurv=-\tangentdiv\normal$, where $\overline{\nabla}$ represents the tangential gradient (see, e.g., \cite{Koehne2013}), and the constant $\sigma>0$ is the surface tension coefficient. Similarly, on the external free surface, it holds
\begin{subnumcases}{\label{eq-lo-full}}  
\VV_l\cdot\normal=v_{+},&\text{on $\parl\Omp(t)\setminus\parl\Omn(t)$,}\label{eq-lo-full-1}\\
\normal\cdot\Paren{-p_l\identity+\SSS_l(\VV_l)}=\barsigma\meancurv\normal,&\text{on $\parl\Omp(t)\setminus\parl\Omn(t)$,}\label{eq-lo-full-2}\\
T_l=\Tconst,&\text{on $\parl\Omp(t)\setminus\parl\Omn(t)$,}\label{eq-lo-full-3}
\end{subnumcases} 
where $\normal$ denotes the unit outer normal, and $v_{+}$ represents the normal velocity of the free surface. The constant $\barsigma>0$ denotes the surface tension coefficient of the external free boundary $\parl\Omp(t)\setminus\parl\Omn(t)$, with $\sigma\neq\barsigma$ in general. Moreover, the temperature outside the gas-liquid region remains constant at the temperature $\Tconst$, which is positive and provides the continuity condition \eqref{eq-lo-full-3}. % for the temperature on the external free boundary. %\eqref{eq-lg-full-1} and \eqref{eq-lo-full-1} are the kinematic boundary condition.  \eqref{eq-lg-full-2} and \eqref{eq-lo-full-2} are the stress balance equation. \eqref{eq-lg-full-3} is the continuity condition for the temperature.  
Finally, we assume that the compatibility conditions hold for the initial data $\VV_l(\cdot,0), T_l(\cdot,0),\gdens(\cdot,0),\VV_g(\cdot,0)$ and $p_g(\cdot,0)$ in the initial gas-liquid domain $\overline{\Omn(0)\cup\Omp(0)}$.

Since we are considering the scenario where the liquid volume is finite, the divergence-free condition \eqref{eq-l-full-2} ensures that the liquid volume remains constant
\begin{equation*}
\module{\Omp(t)}\equiv V\in (0,\infty),\quad t\ge 0.
\end{equation*} 
Moreover, by the conservation of mass \eqref{eq-g-full-1}, we have 
\begin{equation}\label{eq-conserve}
\int_{\Omn(t)}\gdens(x,t)dx=\int_{\Omn(0)}\gdens_0(x)dx,\quad t>0,
\end{equation} 
where $\gdens_0(\cdot)=\gdens(\cdot,0)$ is the initial gas density.
\subsection*{The  approximate model of the full free boundary problem \eqref{eq-l-full}--\eqref{eq-lo-full}}

This paper is primarily concerned with the stability of equilibrium (time-independent) solutions for the following approximate model \eqref{eq-l-main}--\eqref{eq-b-main}, which arises, for instance, in the study of sonoluminescence \cite{Barber1992,Barber1994}. %Barber1991, 
This model reads
\begin{subnumcases}{\label{eq-l-main}}  
\ldens\Paren{\parl_t\VV_l+\VV_l\cdot\nabla\VV_l}-\mu_l\Laplace\VV_l +\nabla p_l=0,&\text{in $\Omp(t)$,}\label{eq-l-main-1}\\
\Divergence\VV_l=0,&\text{in $\Omp(t)$,}\label{eq-l-main-2}\\
T_l\equiv\Tconst,&\text{in $\Omp(t)$,}\label{eq-l-main-3} 
\end{subnumcases} 
where $\Tconst$ is a prescribed temperature consistent with the boundary condition \eqref{eq-lo-full-3},
\begin{subnumcases}{\label{eq-g-main}}  
\parl_t\gdens+\Divergence(\gdens\VV_g)=0,&\text{in $\Omn(t)$,}\label{eq-g-main-1}\\
p_g=p_g(t),&\text{in $\Omn(t)$,}\label{eq-g-main-2}\\
\gdens T_g\Paren{\parl_ts+\VV_g\cdot\nabla s}=\Divergence(\Thermal\nabla T_g),&\text{in $\Omn(t)$,}\label{eq-g-main-3}\\
p_g=\GASCONST T_g\gdens,&\text{in $\Omn(t)$,} \label{eq-g-main-4}\\
s=\Heatcapa\log\Paren{{p_g}/{\gdens^\Adi}},&\text{in $\Omn(t)$,}\label{eq-g-main-5}  
\end{subnumcases} 
and the boundary conditions
\begin{subnumcases}{\label{eq-b-main}}  
\VV_l\cdot\normal=\VV_g\cdot\normal=v_{-},&\text{on $\parl\Omn(t)$,}\label{eq-b-main-1}\\
\Paren{p_g-p_l}\normal+2\mu_l\normal\cdot\DDD(\VV_l)=-\sigma\meancurv\normal,&\text{on $\parl\Omn(t)$,}\label{eq-b-main-2}\\
T_g=T_l,&\text{on $\parl\Omn(t)$,}\label{eq-b-main-3}\\
\VV_l\cdot\normal=v_{+},&\text{on $\parl\Omp(t)\setminus\parl\Omn(t)$,}\label{eq-b-main-4}\\
-p_l\normal+2\mu_l\normal\cdot\DDD(\VV_l)=\barsigma\meancurv\normal,&\text{on $\parl\Omp(t)\setminus\parl\Omn(t)$.}\label{eq-b-main-5} 
\end{subnumcases}   

The above approximation gas system \eqref{eq-g-main} can be found in \cite{Biro2000,Lai2023}. As noted in \cite[Section 2]{Biro2000}, the gas pressure equation \eqref{eq-g-main-2} is derived under the high sound velocity assumption. This implies that disturbances in the fluid propagate quickly through the gas region, allowing the momentum equation \eqref{eq-g-full-2}  to be simplified to $\nabla p_g\equiv\Zero$ in $\Omn(t)$.  

For gas dynamic \eqref{eq-g-main}, if we substitute \eqref{eq-g-main-5} into \eqref{eq-g-main-3} and eliminate the temperature $T_g$ by using \eqref{eq-g-main-4}, it holds
\begin{equation*}
\Paren{\parl_t+\VV_g\cdot\nabla}\Bracket{\Heatcapa\Paren{\log{p_g}-\log{\gdens^\Adi}}}=\Thermal\Laplace\Paren{\gdens^{-1}},\quad \text{in}\ \Omn(t).
\end{equation*}
Upon simplification by applying \eqref{eq-g-main-1}, we obtain 
\begin{equation}\label{eq-gas-velocity}
\dot p_g/p_g=\Thermal\Heatcapa^{-1}\Laplace\Paren{\gdens^{-1}}-\Adi\Divergence\VV_g,\quad \text{in}\ \Omn(t),
\end{equation}
where we have denoted $\dot p_g=\parl_t p_g$. This implies that the divergence of the gas velocity is determined by both the gas density and the pressure at the gas-liquid interface. Therefore, system \eqref{eq-g-main} is equivalent to
\begin{subnumcases}{\label{eq-g-main-equivalent}}  
\parl_t\gdens+\Divergence(\gdens\VV_g)=0,&\text{in $\Omn(t)$,}\label{eq-g-main-equivalent-1}\\
p_g=p_g(t),&\text{in $\Omn(t)$,}\label{eq-g-main-equivalent-2}\\
\dot p_g/p_g=\Thermal\Heatcapa^{-1}\Laplace\Paren{\gdens^{-1}}-\Adi\Divergence\VV_g,&\text{in $\Omn(t)$.}\label{eq-g-main-equivalent-3} 
\end{subnumcases}
Furthermore, by eliminating the divergence of the gas velocity, one has
\begin{equation}\label{eq-parl-gas density}
\parl_t\gdens=\frac{\Thermal}{\Adi\Heatcapa}\Paren{\Laplace\log\gdens-\frac{\module{\nabla\gdens}^2}{\gdens^2}}-\VV_g\cdot\nabla\gdens+\frac{\dot p_g}{\Adi p_g}\gdens,\quad \text{in}\ \Omn(t).
\end{equation}

From the form of the equivalent gas system \eqref{eq-g-main-equivalent}, it is evident that without imposing the irrotational condition ($\operatorname{curl}\VV_g=0$) or any symmetry assumptions, the approximate model \eqref{eq-l-main}--\eqref{eq-b-main}, in general, may not have a unique solution. Even for steady-state solutions, except for the spherically symmetric equilibria, other solutions involve rigid rotations. Additionally, when considering steady-state solutions, the liquid viscosity and the presence of surface tension require that both the gas bubble and the gas-liquid region exhibit spherical symmetry, as per Alexandrov's theorem. A recent study \cite{Lai2024a} shows that assuming the flows are irrotational, the shape of any steady-state gas is exclusively spherical by the surface tension alone.

Therefore, our focus will be on spherically symmetric solutions to the free boundary problem \eqref{eq-l-main}--\eqref{eq-b-main}. For a spherically symmetric solution, the gas domain $\Omn(t)$ is assumed to be a ball centered at the origin with radius $R(t)$. Given the density $\gdens$ of the gas in the spherical region $B_R$, we define the corresponding mass as 
\begin{equation*} 
\GASMASS[\gdens,R]=\int_{B_R}\gdens(x)dx.
\end{equation*}
\subsection*{Main results} 

We will study the stability of the spherically symmetric equilibria of free boundary problem \eqref{eq-l-main}--\eqref{eq-b-main} under the assumption of spherical symmetry. Under this assumption, system \eqref{eq-l-main}--\eqref{eq-b-main} is equivalent to problem \eqref{eq-main-syst} for the density and radius of the gas bubble, as will be shown in Proposition \ref{p-main-syst}. Based on this, we establish a one-to-one correspondence between the equilibrium gas state $(\gdens,R)\equiv(\ssd,\ssr)$ and the mass-volume pair $(M,V)$ for system \eqref{eq-l-main}--\eqref{eq-b-main}.
\begin{theorem}\label{t-equilibria} 
There exists a smooth bijective mapping $(M,V)\in(0,\infty)\times(0,\infty)\mapsto (\ssr[M,V],\ssd[M,V])$, as defined in \eqref{eq-Rstar-bound} and \eqref{eq-rhostar}, such that any regular spherically symmetric equilibrium solution to system \eqref{eq-l-main}--\eqref{eq-b-main} is uniquely determined by the mass-volume pair $(M,V)$. %as  the formulas in Proposition \ref{p-full}.  
\end{theorem}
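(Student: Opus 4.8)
The plan is to use the equivalence of Proposition~\ref{p-main-syst} to reduce the steady-state problem to a single scalar equation for the equilibrium gas radius $\ssr$, to show that this equation --- equivalently, a ninth-degree polynomial --- has a unique positive root depending smoothly on $(\GASMASS,V)$, and finally to recover $\ssd$ and verify injectivity of $(\GASMASS,V)\mapsto(\ssr,\ssd)$ by the implicit function theorem together with explicit inversion formulas. \textbf{Step 1 (reduction to the equilibrium relations).} For a regular spherically symmetric steady state one has $\ptR=0$ and all time derivatives vanish. Continuity \eqref{eq-g-main-1} gives $\Divergence(\gdens\VV_g)=0$, so $r^{2}\gdens v_g$ is constant in $r$, and regularity at the origin forces $\VV_g\equiv\Zero$; then \eqref{eq-g-main-equivalent-3} (equivalently \eqref{eq-parl-gas density}) reduces to $\Laplace(\gdens^{-1})=0$, and a bounded radial harmonic function is constant, so $\gdens\equiv\ssd>0$. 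Since $p_g$ is spatially constant, \eqref{eq-g-main-4} with \eqref{eq-b-main-3} and \eqref{eq-l-main-3} give $p_g=\GASCONST\Tconst\ssd$. In the liquid, \eqref{eq-l-main-2} with $\VV_l\cdot\normal=v_-=\ptR=0$ on $\parl\Omn$ forces $\VV_l\equiv\Zero$, hence $\DDD(\VV_l)=\Zero$ and $\nabla p_l=0$. The mass and volume constraints read $\GASMASS=\tfrac43\pi\ssr^{3}\ssd$ and $\tfrac43\pi(\ssor^{3}-\ssr^{3})=V$, so $\ssor=(\ssr^{3}+\tfrac{3V}{4\pi})^{1/3}$.

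\textbf{Step 2 (the scalar equilibrium equation and the ninth-degree polynomial).} With $\DDD(\VV_l)=\Zero$ and $\meancurv=-2/R$ for a sphere of radius $R$ (outer normal), the traction conditions \eqref{eq-b-main-2} on $\parl\Omn$ and \eqref{eq-b-main-5} on the external sphere give $p_g-p_l=2\sigma/\ssr$ and $p_l=2\barsigma/\ssor$. Adding these and inserting $p_g=\GASCONST\Tconst\ssd$, $\ssd=3\GASMASS/(4\pi\ssr^{3})$, $\ssor=(\ssr^{3}+\tfrac{3V}{4\pi})^{1/3}$ yields
\[ \frac{3\GASCONST\Tconst\GASMASS}{4\pi\,\ssr^{3}}=\frac{2\sigma}{\ssr}+\frac{2\barsigma}{(\ssr^{3}+\tfrac{3V}{4\pi})^{1/3}}. \]
Multiplying by $\ssr^{3}$, isolating the $\barsigma$-term, cubing, and clearing $(\ssr^{3}+\tfrac{3V}{4\pi})$ converts this into a polynomial identity $P(\ssr;\GASMASS,V)=0$ of degree nine with leading coefficient $-8(\sigma^{3}+\barsigma^{3})$. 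Cubing is reversible precisely when $2\sigma\ssr^{2}<\tfrac{3\GASCONST\Tconst\GASMASS}{4\pi}$; a positive root of $P$ violating this would have a negative cube root equal to a positive quantity, which is impossible, so the positive roots of $P$ are exactly the positive solutions of the displayed equation.

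\textbf{Steps 3--4 (existence, uniqueness, bounds, smoothness, injectivity).} Set $c:=\tfrac{3V}{4\pi}$ and, for $R>0$,
\[ \psi(R):=\frac{3\GASCONST\Tconst\GASMASS}{4\pi}-2\sigma R^{2}-\frac{2\barsigma R^{3}}{(R^{3}+c)^{1/3}}. \]
Then $\psi(0^{+})=\tfrac{3\GASCONST\Tconst\GASMASS}{4\pi}>0$, $\psi(R)\to-\infty$ as $R\to\infty$, and the identity $\tfrac{d}{dR}\big[R^{3}(R^{3}+c)^{-1/3}\big]=(2R^{5}+3cR^{2})(R^{3}+c)^{-4/3}>0$ shows $\psi'(R)<0$ on $(0,\infty)$; hence $\psi$ has a unique zero $\ssr=\ssr[\GASMASS,V]$. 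From $(R^{3}+c)^{1/3}>R$ and $R^{3}(R^{3}+c)^{-1/3}<R^{2}$ one obtains the bracket
\[ \sqrt{\frac{3\GASCONST\Tconst\GASMASS}{8\pi(\sigma+\barsigma)}}<\ssr<\sqrt{\frac{3\GASCONST\Tconst\GASMASS}{8\pi\sigma}}, \]
which is \eqref{eq-Rstar-bound} and in particular supplies the sign condition of Step 2. Define $\ssd[\GASMASS,V]:=3\GASMASS/(4\pi\ssr^{3})$, giving \eqref{eq-rhostar}. As $\psi\in C^{\infty}$ in $(R,\GASMASS,V)$ with $\parl_R\psi(\ssr;\GASMASS,V)<0$, the implicit function theorem yields $\ssr\in C^{\infty}((0,\infty)^{2})$, hence $\ssd\in C^{\infty}$. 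For injectivity, from a pair $(\ssr,\ssd)$ one recovers $\GASMASS=\tfrac43\pi\ssr^{3}\ssd$ and, from the balance relation of Step 2, $\ssor=2\barsigma\big(\GASCONST\Tconst\ssd-2\sigma/\ssr\big)^{-1}$ and $V=\tfrac43\pi(\ssor^{3}-\ssr^{3})$, smoothly on the image $\{(\ssr,\ssd):2\sigma<\GASCONST\Tconst\ssd\,\ssr<2\sigma+2\barsigma\}$. Combined with Step 1, every regular spherically symmetric equilibrium of \eqref{eq-l-main}--\eqref{eq-b-main} has gas radius $\ssr[\GASMASS,V]$ and gas density $\ssd[\GASMASS,V]$, which is the claimed bijection.

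\textbf{Expected main obstacle.} The two delicate points are: (i) the rigorous reduction in Step 1 --- verifying that regularity at the origin and the steady-state structure force the gas velocity and density to be constant, which requires handling the genuinely nonlinear density equation \eqref{eq-parl-gas density} rather than merely a linearization; and (ii) the monotonicity $\psi'<0$ of Step 3, the heart of uniqueness, resting on the explicit derivative computation above. Pinning down the exact image of $(\GASMASS,V)\mapsto(\ssr,\ssd)$, so that ``bijective'' is a precise statement, is routine bookkeeping from the relations of Step 2.
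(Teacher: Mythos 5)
Your proposal is correct and follows the same overall architecture as the paper: reduce the steady problem to the algebraic relation \eqref{eq-equi-eq} (mass constraint plus the Young--Laplace balance $p_g=2\sigma/\ssr+2\barsigma/\ssor$ with $p_g=\GASCONST\Tconst\ssd$), prove that this scalar equation has a unique positive root depending smoothly on $(M,V)$, and recover $(M,V)$ from $(\ssr,\ssd)$ by the explicit inversion, exactly as in \eqref{eq-one-to-one}. Where you differ is in how the key root lemma is executed. The paper passes to the ninth-degree polynomial \eqref{eq-9-poly}, rescales $x=w\sqrt{I}$, writes $\mathbb{K}(w)=-\mathbb{L}(w)I^{9/2}w^{9}$, and gets uniqueness from the strict monotonicity of $\mathbb{L}$ on $\bigl(1/\sqrt{1+\surftenratio},1\bigr)$ plus sign changes at the endpoints, with smoothness obtained from the smooth dependence of a simple polynomial root on its coefficients. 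You instead work directly with $\psi(R)=\tfrac{3\GASCONST\Tconst M}{4\pi}-2\sigma R^{2}-2\barsigma R^{3}(R^{3}+\bV)^{-1/3}$, whose strict decrease on $(0,\infty)$ (your derivative identity is correct) gives existence, uniqueness, and the bracket \eqref{eq-Rstar-bound} in one stroke, and then the implicit function theorem with $\partial_R\psi<0$ gives smoothness of $\ssr[M,V]$. This is more elementary and bypasses the polynomial entirely; the cubing/reversibility discussion in your Step 2 is in fact superfluous (the cube is injective on $\mathbb{R}$, and you never use the polynomial afterwards), whereas the paper's polynomial formulation is the form it later quotes in Section \ref{s:proof}. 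Two small points of care, neither a gap: in Step 1 the deduction $\VV_g\equiv\Zero$ from $r^{2}\gdens v_g=\mathrm{const}$ needs positivity of $\gdens$ as part of regularity (as the paper also assumes in Appendix \ref{appendix-1}), and your identification of the image $\{(\ssr,\ssd):2\sigma<\GASCONST\Tconst\ssd\,\ssr<2\sigma+2\barsigma\}$ is the right way to make ``bijective'' precise, matching (and in fact correcting a typo in) \eqref{eq-one-to-one}.
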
  
\begin{remark}
If no confusion arises, we use subscript $\dagger$ to indicate the steady-state solutions or other related quantities. For instance, $\ssr$ represents the equilibrium gas radius and $\Energy_\dagger$ denotes the equilibrium energy.
\end{remark}
\begin{remark}
Based on Theorem \ref{t-equilibria}, we will show in Appendix \ref{appendix-1} that,  for the original full free boundary problem \eqref{eq-l-full}--\eqref{eq-lo-full}, the mass-volume pairs are the sole determinants of the regular spherically symmetric equilibria provided that the liquid temperature remains constant.
\end{remark}

To state our stability result, we introduce the following manifold of equilibria to system \eqref{eq-main-syst}, which is parameterized by the gas mass and liquid volume
\begin{equation}\label{eq-Mstar}
\Sigma=\Brace{\Paren{\gdens(x,t),R(t)}\equiv\Paren{\ssd[M,V],\ssr[M,V]}\mid 0<M,V<\infty},
\end{equation} 
where $\ssd$ and $\ssr$ are smooth functions defined in Theorem \ref{t-equilibria}. 

The main result of this paper is to demonstrate the nonlinear and exponential asymptotic stability of spherically symmetric equilibria $\Sigma$ to small perturbations. This is significant because the spherically symmetric solution to system \eqref{eq-l-main}--\eqref{eq-b-main} can be reconstructed from the density and radius of the gas bubble. Therefore, we can conclude that the equilibria of the free boundary problem \eqref{eq-l-main}--\eqref{eq-b-main} are also nonlinearly and exponentially asymptotically stable. It should be noted that in the following theorem, $B_1$ refers to the unit open ball and the norm $\norm{\cdot}_{C^{2+2\alpha}_y(B_1)}$ is defined in Appendix \ref{appendix-2}, where the global existence and the uniqueness of spherically symmetric solutions are also provided.

\begin{theorem}\label{t-main result}
Given any mass of the gas, $M>0$, and any liquid volume $V>0$. For free boundary problem \eqref{eq-main-syst}  with the liquid volume $V$, there exists a constant $\eta_0>0$ such that the following holds: 
\begin{enumerate}[(i)]
\item For any initial data $(\gdens_0,R_0,\ptR_0)$ such that the mass $\GASMASS[\gdens_0,R_0]=M$ and 
\begin{equation}\label{eq-main result-condition}
\norm{\gdens_0(R_0y)-\ssd[M,V]}_{C^{2+2\alpha}_y(B_1)}+\module{R_0-\ssr[M,V]}+|\ptR_0|\le\eta_0,
\end{equation}
where $\alpha\in(0,1/2)$ and the equilibrium $(\ssd[M,V],\ssr[M,V])\in \Sigma$, the global-in-time solution $(\gdens(r,t),R(t))$ satisfies	\begin{align}\label{eq-main- results-i}
&\norm{\gdens(R(t)y,t)-\ssd[M,V]}_{C^{2+2\alpha}_y(B_1)}\nonumber\\
&\quad+\module{R(t)-\ssr[M,V]}+|\ptR|+|\pttR|+|\dddot R|\rightarrow 0,\quad \text{as}\ t\rightarrow\infty.
\end{align} 
\item The global solution $(\gdens(r,t),R(t))$ converges to the equilibrium at an exponential rate. More precisely, there exists a constant $\varpi_\dagger>0$, such that 
\begin{align*}
&\norm{\gdens(R(t)y,t)-\ssd[M,V]}_{C^{2+2\alpha}_y(B_1)}\\
&\quad +\module{R(t)-\ssr[M,V]}+|\ptR|=O\Paren{e^{-\varpi_\dagger t}},\quad \text{as}\ t\rightarrow\infty.
\end{align*}
\end{enumerate}

Therefore, any spherical equilibrium solution of free boundary problem \eqref{eq-l-main}--\eqref{eq-b-main} is nonlinearly and exponentially asymptotically stable.  
\end{theorem}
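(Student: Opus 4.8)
The plan is to recast the spherically symmetric free boundary problem \eqref{eq-l-main}--\eqref{eq-b-main} as an abstract evolution equation, reduce it to a finite-dimensional manifold via center manifold theory, and then establish exponential attraction of that manifold. First, I would invoke Proposition \ref{p-main-syst} to replace the moving-domain system by the fixed-domain system \eqref{eq-main-syst} for the pair $(\gdens(R(t)y,t),R(t))$, using the Lagrangian-type rescaling $x = R(t) y$ with $y \in B_1$; the liquid velocity, pressure and the interface velocity $v_{\pm}$ are all slaved to $(\gdens, R, \ptR)$ through the incompressibility constraint and the boundary conditions, and the gas temperature is eliminated via \eqref{eq-g-main-4}--\eqref{eq-g-main-5}, reducing everything to the quasilinear parabolic equation for $\gdens$ coupled to the second-order ODE for $R$ encoded in \eqref{eq-parl-gas density} and the Rayleigh--Plesset-type balance at $\parl\Omn(t)$.

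Next, I would set up the linearization about a fixed equilibrium $(\ssd[M,V],\ssr[M,V]) \in \Sigma$ on the mass-constrained affine space (perturbations with $\GASMASS[\gdens_0,R_0] = M$). The key spectral input is that the linearized operator $\Linearpart$ generates an analytic semigroup on the H\"older space $C^{2+2\alpha}_y(B_1) \times \mathbb{R} \times \mathbb{R}$ (or the appropriate little-H\"older closure needed for maximal regularity), with spectrum consisting of a simple eigenvalue $0$ — whose eigenfunction is the tangent vector $\parl_M(\ssd,\ssr)$ or $\parl_V$ to $\Sigma$ along the admissible direction, reflecting that $\Sigma$ is a manifold of equilibria — and the rest of the spectrum lying in a half-plane $\{\operatorname{Re}\eigv \le -\varpi_\dagger\}$ for some $\varpi_\dagger > 0$. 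This spectral gap is exactly what the paper's later spectral analysis (referenced for the optimal rate at small $V$) should supply; here I would need it qualitatively for general $(M,V)$, which likely follows from the self-adjoint structure of the linearized gas diffusion $\Laplace \log \gdens$ part plus a perturbation/compactness argument controlling the coupling with the $R$-ODE, together with an energy/coercivity estimate using that the equilibrium is a local energy minimizer (the statement after Theorem \ref{t-equilibria} alludes to this).

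Then I would apply the generalized principle of linearized stability for quasilinear parabolic equations near a normally hyperbolic manifold of equilibria (the Pr\"uss--Simonett--Zacher framework): in a small $C^{2+2\alpha}$-neighborhood of $\Sigma$, the mass-constrained phase space splits into the center part (tangent to $\Sigma$) and a stable part, one constructs the local center manifold which coincides with $\Sigma$, and shows every solution starting within $\eta_0$ of $\Sigma$ exists globally, stays close, and converges exponentially to a \emph{single} equilibrium on $\Sigma$ at rate $\varpi_\dagger$ (any rate below the spectral gap). Because the mass $M$ is conserved along the flow and $V$ is fixed, the limiting equilibrium is forced to be $(\ssd[M,V],\ssr[M,V])$ itself, which upgrades convergence-to-the-manifold into convergence to the \emph{prescribed} equilibrium and yields both (i) and (ii); the decay of $\ptR, \pttR, \dddot R$ in (i) then follows by differentiating the $R$-ODE and using parabolic smoothing bootstrap on $\gdens$. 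Finally, reconstructing $\VV_l, p_l, \VV_g, p_g$ from $(\gdens,R)$ as in Proposition \ref{p-main-syst} transfers the stability back to the original formulation \eqref{eq-l-main}--\eqref{eq-b-main}.

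The main obstacle I expect is verifying the quasilinear parabolic structure with \emph{maximal $L_p$ (or H\"older) regularity} for the coupled $(\gdens,R)$ system in a way that the abstract center manifold machinery applies: the equation \eqref{eq-parl-gas density} for $\gdens$ is degenerate-looking (it is $\Laplace \log \gdens$, fine where $\gdens$ is bounded below, which holds near the positive equilibrium) but the boundary conditions are \emph{dynamic} — the evolution of $R$ appears in the boundary data for $\gdens$ through $v_{-} = \ptR$ and through the stress balance \eqref{eq-b-main-2} which, after the liquid problem is solved, becomes a nonlocal (but lower-order in $\gdens$) relation involving $\pttR$. Showing this coupled system fits the normally-hyperbolic-equilibria template — in particular that the full linearization has the asserted spectrum with $0$ semisimple of multiplicity exactly one (matching $\dim$ of the admissible tangent directions to $\Sigma$ at fixed $V$) and no spectrum on the imaginary axis otherwise — is the crux; the rest is a careful but standard application of the existing theory.
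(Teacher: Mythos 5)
Your overall architecture (spectral gap plus a center-manifold reduction along the one-dimensional family of equilibria with the mass constraint singling out the limit) is in the right spirit, but two of your load-bearing steps do not fit this problem as stated. First, the coupled system is not a quasilinear \emph{parabolic} problem of the type the Pr\"uss--Simonett--Zacher normally-hyperbolic-manifold theorem handles: because of the liquid inertia in the stress balance \eqref{eq-main-syst-3}, the boundary value $\gdens(R(t),t)$ is tied to $\pttR$, so after reduction the evolution has the implicit form $\dot\zz=\Linearpart\zz+\Nonlpart(\zz,\dot\zz)$ (see \eqref{eq-Lw+Nw}), with the nonlinearity depending on $\dot\zz$ through terms like $\ErrorR\ddot\ErrorR$ and $\ptR^2$, and with a genuinely second-order-in-time, damped-oscillator component for $R$ coupled to the heat-type equation for the density. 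Maximal-regularity/analytic-semigroup machinery for quasilinear parabolic equations does not apply off the shelf here; the paper instead uses an explicit eigenfunction reduction to $\ell^2$, proves only $C_0$-semigroup decay on the stable complement via Gearhart--Pr\"uss (Proposition \ref{p-invariant space}), and invokes a center-manifold lemma tailored to implicit equations (Lemma \ref{l-e-converge}), whose hypotheses --- convergence of $\zz(t)$ to the manifold and smallness of $\sup_t\norm{\dot\zz(t)}$ --- are themselves supplied by part (i), which is proved \emph{separately} by the energy dissipation law \eqref{eq-energy-dissipation-sph}, the local-minimizer coercivity of Theorem \ref{t-minimizer}, Lyapunov stability, and Barbalat's lemma. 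In your scheme (i) and (ii) are both supposed to fall out of the abstract theorem, so you have no substitute for this input, and you would need to either verify the abstract framework for the implicit/mixed-type structure or supply an independent argument for global boundedness and convergence.

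Second, your spectral-gap step is asserted rather than proved, and the suggested mechanism is not adequate: the full linearization \eqref{eq-matrix-L} is not self-adjoint (the density-diffusion block is coupled non-symmetrically to the $(\Errorrho_2,\ErrorR,\ptErrorR)$ oscillator block), so ``self-adjointness of $\Laplace\log\gdens$ plus compact perturbation plus energy coercivity'' does not rule out complex eigenvalues creeping up to the imaginary axis. In the paper this is exactly where the real work lies: the spectrum is characterized as $\{0\}$ together with the zero set of the meromorphic function \eqref{eq-meromorphic-function} obtained by Laplace transform, and the uniform bound $\sup\{\operatorname{Re}\lambda:\lambda\in\operatorname{sp}(\Linearpart)\setminus\{0\}\}\le-2\varpi$ requires the case-by-case analysis of Lemma \ref{l-real-part} (including showing $0$ is simple, which also underlies your claim that the kernel matches the tangent direction of $\Sigma$). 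Likewise, the global center manifold is not abstractly ``constructed''; it is written down explicitly from the equilibrium relations \eqref{eq-center-density}, and the triviality of the reduced dynamics is verified by computation in Proposition \ref{p-center}. Without these two ingredients --- a correct functional-analytic framework for the implicit, non-parabolic coupling, and an actual proof of the spectral gap for the non-self-adjoint linearization --- your proposal remains a plausible outline rather than a proof.
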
   
\subsection*{Background and history}

The dynamics of gas bubbles immersed in a liquid with finite volume is a multifaceted subject that intersects fluid dynamics, thermodynamics, and material science. This topic has significant implications in various fields such as industrial engineering \cite{Hashmi2012}, environmental science \cite{Leifer2003}, and biomedical applications  \cite{Chowdhury2020,Xiong2021}. Other discussion of bubble phenomena and applications can be found in the review article \cite{Prosperetti2004}. %Gao2020,Brenner1995,Putterman1998

Gas bubbles in liquids exhibit complex behavior due to the interplay between pressure, surface tension, and the surrounding fluid's viscosity. When a gas bubble is immersed in a liquid, the dynamics of the bubble are influenced by both the properties of the gas and the liquid, as well as the interactions at their interface. 
The gas-liquid interface plays a crucial role in bubble dynamics. The behavior at this interface is influenced by surface tension and the pressure difference between the gas inside the bubble and the surrounding liquid. The movement and deformation of this interface can affect the stability and motion of the bubble.
The finite volume constraint adds an additional layer of complexity, as it implies that the bubble dynamics are affected by altering the pressure distribution and potential interactions with surfaces within the liquid. 

In the unbounded liquid scenario, where $\Omp(t)=\mathbb{R}^3\setminus \overline{\Omn(t)}$, Prosperetti \cite{Prosperetti1991} first considered the approximate model \eqref{eq-l-main}, \eqref{eq-g-main}, and \eqref{eq-b-main-1}--\eqref{eq-b-main-3} for the original problem \eqref{eq-l-full}--\eqref{eq-lg-full} (without external boundary conditions \eqref{eq-lo-full}). Later, Biro and Vel\'azquez \cite{Biro2000} proved the global existence of solutions in H\"{o}lder space for initial data near the spherically symmetric equilibria, as well as the Lyapunov stability of the equilibria under small mass-preserving perturbations, assuming that the liquid is inviscid on the gas-liquid free interface (i.e., $p_g \normal - p_l \normal=- \sigma \meancurv\normal$ in \eqref{eq-b-main-2}). They also considered the liquid pressure far away from the gas bubble, represented by an external forcing term $p_{\infty}(t)=\lim_{x\rightarrow\infty}p_{l}(x,t)$. Subsequently, Lai and Weinstein \cite{Lai2023} proved the exponentially asymptotic stability of the manifold of spherically symmetric equilibria, taking into account the liquid viscosity on the gas-liquid interface. They also demonstrated the existence and uniqueness of an exponentially asymptotically stable periodic spherically symmetric pulsating solution \cite{Lai2024}, given a small-amplitude, time-periodic $p_{\infty}(t)$.

When considering compressible liquids, Shapiro and Weinstein focused on the linearized problem of a gas bubble immersed in an inviscid compressible liquid. They demonstrated that the system exhibits exponential point-wise decay towards a family of equilibria \cite{Shapiro2011}. In the case of an unbounded liquid governed by compressible Navier-Stokes equations and a homogeneous spherical gas bubble following the polytropic gas law, Zhao and Zou proved the existence of global solutions and the asymptotic stability of the spherical equilibria in this free boundary problem \cite{Zhao2022}. 
%Additionally, Evje studied a one-dimensional two-phase model of the drift-flux model for gas-liquid flow dynamics, specifically focusing on the presence of a free interface separating the gas-liquid mixture from a pure gas region \cite{Evje2013}. 

We are not aware of any relevant rigorous theoretical studies for the situation in which the gas-liquid region is bounded, although there are some numerical studies available. Siegel conducted simulations in two dimensions, assuming that the gas motion is governed by the Stokes equations with a free boundary \cite{Siegel1999}. Lozinski and Romerio presented numerical results for the case that both the gas and the liquid are incompressible, and the entire gas-liquid system occupies a bounded, time-independent domain \cite{Lozinski2007}.   
\subsection*{Novelties and structure of the paper} 

The novelties of this study are as follows.
\begin{enumerate}[(i)]
\item We consider the scenario of a gas bubble immersed in a liquid with a finite volume. Unlike in an unbounded liquid, the presence of distinct boundaries in a finite liquid volume imposes additional constraints on the bubble's movement and interactions. Specifically, the bubble dynamics are influenced by a well-defined free boundary where surface tension plays a crucial role. In this bounded gas-liquid system, the steady-state gas-liquid region must assume a spherical shape.
For general surface tension coefficients $\sigma\neq\barsigma$, we analyze the unique positive real root of a ninth-order polynomial to establish that spherical equilibrium solutions correspond uniquely to specific pairs of gas mass and liquid volume. Additionally, we determine the upper and lower bounds for the steady-state radius of the gas bubble.

\item We analyze the energy dissipation for general solutions (not necessarily symmetric) to the free boundary problem \eqref{eq-l-main}--\eqref{eq-b-main}, which is important to the asymptotic stability analysis in general and extends the results from the unbounded liquid case presented in \cite[Proposition 7.4]{Lai2023}. Our results demonstrate that the equilibrium solution serves as a local minimizer of the energy functional even when subjected to relatively large perturbations. Specifically, this means that perturbations in the gas density can be accommodated up to a certain proportion of the equilibrium state, with the proportionality constant depending solely on the adiabatic constant $\Adi$. This result is notable because it shows that the extent of acceptable disturbance is independent of the liquid volume, making it particularly relevant for scenarios involving small liquid volumes.

\item When applying the central manifold theory, we construct general global central manifolds based on the algebraic equations of the steady-state solution parameterized by the mass-volume pair. This approach generalizes the local central manifold identified in \cite[Lemma 9.6]{Lai2023}.

\item For sufficiently small liquid volumes, we nearly achieve the optimal exponential decay rate. Additionally, we demonstrate that reducing the gas mass or increasing the temperature can accelerate the convergence --- an effect not observed with an infinite liquid volume. For spherically symmetric solutions, the convergence rate is solely determined by the spectrum of a linear operator, specifically  $\sup\{\operatorname{Re}(\lambda):\lambda\in\operatorname{sp}(\Linearpart)\setminus\{0\}\}$ (see Section \ref{s-exponential convergence} for details). By analyzing its upper bound and estimating the lower bound of a specific negative eigenvalue, we establish the existence of a constant $\Theta_0\in(0,1)$ such that
\begin{equation*}
-\pi^2\bkappa<\sup\{\operatorname{Re}(\lambda):\lambda\in\operatorname{sp}(\Linearpart)\setminus\{0\}\}\le-\Theta_0\pi^2\bkappa,
\end{equation*}
where $\pi^2\bkappa\sim \sqrt{\Tconst}/\sqrt{M}$, revealing a clear correlation between the exponential decay index, gas mass $M$, and external temperature $\Tconst$.
\end{enumerate}

The rest of the paper is structured as follows. In Section \ref{s:spherical}, we reduce free boundary problem \eqref{eq-l-main}--\eqref{eq-b-main} to system \eqref{eq-main-syst} and specify the equilibria under the spherical symmetry assumption. Section \ref{s:energy}  demonstrates the energy dissipation and shows that the steady-state solution acts as a local minimizer of the energy functional under proportional perturbations. In Section \ref{s:proof}, we establish the nonlinear and exponential asymptotic stability of the equilibria using center manifold theory, as detailed in Appendix \ref{appendix-4}. The calculation of equilibria for problem \eqref{eq-l-full}--\eqref{eq-lo-full} is provided in Appendix \ref{appendix-1}, while Appendix \ref{appendix-2} covers the well-posedness and Lyapunov stability of system \eqref{eq-main-syst}. Appendix \ref{appendix-3} includes necessary verifications for applying the center manifold theory. 
\section{Spherically symmetric solutions and equilibria}\label{s:spherical} 

In this section, we will show that, under the assumption of spherical symmetry, the free boundary problem \eqref{eq-l-main}--\eqref{eq-b-main} can be simplified to a system involving the gas density and the bubble radius, provided that the liquid volume and other parameters are fixed. Furthermore, we will demonstrate that the regular spherically symmetric equilibrium solution is uniquely determined by the gas mass and liquid volume. 

We assume that the velocity fields of the gas and the liquid are spherically symmetric (e.g., $\VV_g(x,t)=v_g(|x|,t)x/|x|,x\neq 0$), and the other scalar variables are radial (e.g., $\gdens(x,t)=\gdens(\module{x},t)$), to rewrite problem \eqref{eq-l-main}--\eqref{eq-b-main} as follows. 

\begin{proposition}\label{p-main-syst}
Given the liquid volume $V$, solving the regular spherically symmetric solution to system \eqref{eq-l-main}--\eqref{eq-b-main} reduces to an initial boundary value problem for the bubble radius $R(t)$ and the gas density $\gdens(r,t)=\gdens(x,t)$, where $r=\module{x}\le R(t)$. More precisely, for $r\le R(t)$ and $t>0$, we have 
\begin{subnumcases}{\label{eq-main-syst}}
\parl_t\gdens(r,t)=\frac{\Thermal}{\Adi\Heatcapa}\Laplace_r\log\gdens(r,t)+\frac{\dot{p}}{\Adi p(t)}\Paren{\frac{r\parl_r\gdens(r,t)}{3}+\gdens(r,t)},\label{eq-main-syst-1}\\
\ptR=-\frac\Thermal{\Adi\Heatcapa}\frac{\parl_r\gdens(R(t),t)}{\gdens(R(t),t)^2}-\frac{\dot{p}}{3\Adi p(t)}R(t),\label{eq-main-syst-2}\\
p(t)=\GASCONST\Tconst\gdens(R(t),t),\label{eq-main-syst-4}\\
\gdens(R(t),t)=\frac1{\GASCONST\Tconst}\left\lbrace4\mu_l\Paren{\frac{\ptR}{R(t)}-\frac{\ptbR}{\bR(t)}}+\frac{2\sigma}{R(t)}+\frac{2\barsigma}{\bR(t)}\right.\nonumber\\
\quad\quad\quad\quad\quad\quad\quad\quad  +\ldens\Bracket{\Paren{R(t)-\frac{R(t)^2}{\bR(t)}}\pttR+\Paren{\frac32-\frac{2R(t)}{\bR(t)}+\frac{R(t)^4}{2\bR(t)^4}} \ptR^2}\Bigg\rbrace,\label{eq-main-syst-3}  
\end{subnumcases} 
where $\bR=\sqrt[3]{R^3+(3V/4\pi)}$ represents the external radius of the entire gas-liquid region. The initial condition reads $(\gdens(\cdot,0),R(0),\ptR (0))=(\gdens_0,R_0,\ptR_0)$. 
Above, we omit the subscript for the gas pressure $p_g$, and denote the operator $\Laplace_r(\cdot)=r^{-2}\parl_r(r^2\parl_r(\cdot))$ with $r=\module{x}$. 
\end{proposition}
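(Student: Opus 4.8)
The plan is to substitute the spherically symmetric ansatz into \eqref{eq-l-main}--\eqref{eq-b-main}, peel off a closed system for the pair $(\gdens(r,t),R(t))$, and then check that the reduction is reversible. First I would handle the liquid. On the shell $\Omp(t)=\{R(t)<|x|<\bR(t)\}$ write $\VV_l=v_l(|x|,t)\,x/|x|$; incompressibility \eqref{eq-l-main-2} gives $\parl_r(r^2v_l)=0$, so $r^2v_l(r,t)=c(t)$, and the kinematic condition \eqref{eq-b-main-1} forces $v_l(R,t)=v_-=\ptR$, whence $c(t)=R^2\ptR$. Differentiating the volume constraint $\bR^3=R^3+3V/(4\pi)$ gives $\bR^2\ptbR=R^2\ptR$, which is exactly the value $v_l(\bR,t)$ must take in \eqref{eq-b-main-4}, so the ansatz is self-consistent. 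The decisive observation is that $\VV_l=c(t)\,\nabla(-1/r)$ is the gradient of a harmonic function off the origin, so $\Laplace\VV_l\equiv\Zero$ on $\Omp(t)$ and, $\VV_l$ being curl-free, \eqref{eq-l-main-1} collapses to $\nabla p_l=-\ldens\nabla\big(-\dot c/r+c^2/(2r^4)\big)$; integrating yields $p_l(r,t)=\ldens\dot c(t)/r-\ldens c(t)^2/(2r^4)+g(t)$ with a single unknown function $g(t)$.

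Next I would reduce the gas equations via the equivalent form \eqref{eq-g-main-equivalent}. In radial variables \eqref{eq-g-main-equivalent-3} reads $\dot p/p=-(\Thermal/\Heatcapa)\,r^{-2}\parl_r(r^2\gdens^{-2}\parl_r\gdens)-\Adi\,r^{-2}\parl_r(r^2v_g)$. Since $\dot p/p$ depends only on $t$, multiplying by $r^2$ and integrating from $0$ to $r$ --- the boundary contributions at the origin vanish by the regularity of the radial profiles --- gives the explicit formula $v_g(r,t)=-(\Thermal/\Adi\Heatcapa)\,\parl_r\gdens/\gdens^2-(\dot p/3\Adi p)\,r$. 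Inserting it into \eqref{eq-parl-gas density} (equivalently, into $\parl_t\gdens=-\Divergence(\gdens\VV_g)$) makes the $\gdens^{-2}(\parl_r\gdens)^2$ terms cancel, exactly as in the passage from \eqref{eq-gas-velocity} to \eqref{eq-parl-gas density}, and yields \eqref{eq-main-syst-1}; evaluating the same formula at $r=R$, where $v_g(R,t)=v_-=\ptR$ by \eqref{eq-b-main-1}, yields \eqref{eq-main-syst-2}. Equation \eqref{eq-main-syst-4} is then immediate from the equation of state \eqref{eq-g-main-4}, the temperature continuity \eqref{eq-b-main-3}, the prescription $T_l\equiv\Tconst$ in \eqref{eq-l-main-3}, and $p_g=p_g(t)$.

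It remains to close the system using the two dynamic boundary conditions. For $\VV_l=c(t)\,x/r^3$ one computes $\DDD(\VV_l)=(c/r^3)(\identity-3\,\hat x\otimes\hat x)$ with $\hat x=x/|x|$, hence $\normal\cdot\DDD(\VV_l)=-(2c/r^3)\,\hat x$, while the mean curvature of the sphere $\{|x|=a\}$ with outward normal is $\meancurv=-2/a$ in the convention $\meancurv=-\tangentdiv\normal$. Substituting these, the pressure profile from the first step, and $c=R^2\ptR$ into the normal component of \eqref{eq-b-main-2} at $r=R$ and of \eqref{eq-b-main-5} at $r=\bR$ produces two scalar identities; subtracting them eliminates $g(t)$ and expresses $p_g$ through $R,\bR,\ptR,\pttR$. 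Using $\dot c=2R\ptR^2+R^2\pttR$ and $c^2=R^4\ptR^2$, collecting the coefficients of $\pttR$ and $\ptR^2$, and invoking $p_g=\GASCONST\Tconst\gdens(R(t),t)$ then produces \eqref{eq-main-syst-3}.

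For the converse I would start from a solution $(\gdens,R)$ of \eqref{eq-main-syst}, define $\VV_l,p_l$ by the formulas of the first step, $p_g$ by \eqref{eq-main-syst-4}, $v_g$ by the formula of the second step (and $T_g,s$ from \eqref{eq-g-main-4}, \eqref{eq-g-main-5}), and verify term by term that \eqref{eq-l-main}--\eqref{eq-b-main} hold, the initial data matching by construction. I expect the main obstacle to be the bookkeeping of the third step: getting the viscous stress tensor and the sign of the mean curvature exactly right and carrying the algebra through to the precise coefficients $R-R^2/\bR$ and $3/2-2R/\bR+R^4/(2\bR^4)$. Conceptually, the crucial point is the single radial integration in the second step, which converts the reduced gas system into an \emph{explicit} expression for $v_g$ and is precisely what allows a self-contained system for $(\gdens,R)$.
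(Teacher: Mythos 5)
Your proposal is correct and follows essentially the same route as the paper's proof: the radial ansatz gives $v_l=R^2\ptR/r^2$, integration of the liquid momentum equation plus the two stress conditions at $r=R$ and $r=\bR$ (eliminating the constant of integration) yields \eqref{eq-main-syst-3}, and a single radial integration of the equivalent gas system \eqref{eq-g-main-equivalent} gives the explicit $v_g$, hence \eqref{eq-main-syst-1} and, via the kinematic condition at $r=R$, \eqref{eq-main-syst-2}. The only cosmetic difference is that you carry an unknown $g(t)$ in $p_l$ and subtract the two boundary identities, whereas the paper integrates $p_l$ from $R$ to $r$ and evaluates at $\bR$; the computations and resulting coefficients coincide.
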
  

\begin{proof} 
Recalling that the gas and gas-liquid regions are spheres $B_{R(t)}$ and $B_{\bR(t)}$ centered at the origin, respectively, we can assume that the spherically symmetric velocities are $ \VV_l(x,t)=v_l(r,t)x/r$ and $ \VV_g(x,t)=v_g(r,t)x/r$ for $x\neq0$, where $v_l$ and $v_g$ are scalar functions.  Additionally, $p_l,\gdens ,T_g$ and $s$ are radial functions.  Direct calculations yield
\begin{subnumcases}{} 
\ldens\parl_tv_l=\mu_l\Paren{\Laplace_rv_l-2v_lr^{-2}}-\ldens v_l\parl_rv_l-\parl_rp_l,&$R(t)\le r\le \bR(t)$,\label{eq-pro2-1-1}\\
r^{-2}\parl_r(r^2v_l)=0,&$R(t)\le r\le\bR(t)$,\label{eq-pro2-1-2}
\end{subnumcases}
\begin{subnumcases}{}  
\parl_t\gdens+r^{-2}\parl_r(\gdens r^2 v_g)=0,&$r\le R(t)$,\\
\dot{p}p^{-1}=\Thermal\Heatcapa^{-1}r^{-2}\parl_r\Paren{r^2\parl_r\Paren{\gdens^{-1}}}-\Adi r^{-2}\parl_r(r^2v_g),&$r\le R(t)$, 
\end{subnumcases}  
and the boundary conditions \eqref{eq-b-main} become
\begin{subnumcases}{\label{eq-pro2-2}} 
v_l(R(t),t)=v_g(R(t),t)=\ptR(t),\quad v_l(\bR(t),t)=\ptbR(t),\label{eq-pro2-2-1}\\
p(t)-p_l(R(t),t)+2\mu_l\parl_rv_l(R(t),t)=2\sigma R(t)^{-1},\\  
-p_l(\bR(t),t)+2\mu_l\parl_rv_l(\bR(t),t)=-2\barsigma\bR(t)^{-1},\\
T(R(t),t)=\Tconst,\label{eq-pro2-2-4} 
\end{subnumcases} 
where we have used the fact that the curvature of a sphere $B_R$ is $-2R^{-1}$. 

Clearly that \eqref{eq-main-syst-4} follows from \eqref{eq-l-main-3}, \eqref{eq-g-main-2}, \eqref{eq-g-main-5} and \eqref{eq-b-main-3}. Then, the divergence-free condition \eqref{eq-pro2-1-2} and  kinematic boundary condition \eqref{eq-pro2-2-1} imply that for $t>0$, 
\begin{equation}\label{eq-pro2-3}
v_l(r,t)=R(t)^2\ptR(t)r^{-2}=\bR(t)^2\ptbR(t)r^{-2},\quad R(t)\le r\le\bR(t).
\end{equation} 
We also note that $R^2\ptR=\bR^2\ptbR$.  
Omitting the variable $t$ and substituting \eqref{eq-pro2-3} into  \eqref{eq-pro2-1-1}, we have $(2R\ptR^2+R^2\pttR)r^{-2}=2R^4\ptR^2r^{-5}-\ldens^{-1}\parl_rp_l$.
By integrating, we deduce that for $R(t)\le r\le\bR(t)$ and $t>0$,
\begin{equation*}
p_l(r,t)-p_l(R,t)
%&=\ldens 	\int_{R}^{r}\Paren{2 \frac{R^4\ptR^2}{z^5}-\frac{2R\ptR^2 + R^2 \pttR}{z^2}} dz\\
%=\ldens \Paren{2R^4\ptR^2\int_{R}^{r} \frac{1}{z^5}dz-\Paren{2R\ptR^2 + R^2 \pttR}\int_{R}^{r}\frac{1}{z^2}dz}
=\ldens\Bracket{R^4\ptR^2\Paren{R^{-4}-r^{-4}}/2-\Paren{2R\ptR^2+R^2\pttR}\Paren{R^{-1}-r^{-1}}}.
\end{equation*}
Moreover, \eqref{eq-pro2-3} implies $\parl_rv_l(r,t) = -2R^2\ptR r^{-3}=-2\bR^2\ptbR r^{-3}$. As a result,  $\parl_rv_l(R(t),t)=-2\ptR R^{-1}$ and $\parl_rv_l(\bR(t),t)=-2\bR^{-1}\ptbR$. Therefore, we obtain
$p(t)-p_l(R(t),t)-4\mu_l\ptR R^{-1}=2\sigma R^{-1}$ and $-p_l(\bR(t),t)-4\mu_l\bR^{-1}\ptbR=-2\barsigma\bR^{-1}$. Combining these calculations, it follows that
\begin{equation*} 
p_l(r,t)=p(t)-4\mu_l\ptR R^{-1}-2\sigma R^{-1}+\ldens\Bracket{\frac{1}{2}\ptR^2-\frac{1}{2}R^4\ptR^2r^{-4}-\frac{r-R}{r}\Paren{2\ptR^2+R\pttR}},
\end{equation*}
where $R\le r\le\bR$. Setting $r=\bR$, one has
\begin{equation}\label{eq-pro2-4}
p(t)=4\mu_l \Paren{\frac{\ptR }{R}- \frac{\ptbR }{\bR}}+\frac{2\sigma}{R}+\frac{2\barsigma}{\bR}-\ldens \Bracket{\frac {\ptR^2}{2}-\frac {R^4\ptR^2}{2\bR^4}-\frac{\bR-R}{\bR}\Paren{2\ptR^2 + R \pttR}}.
\end{equation}

From the gas dynamics in \eqref {eq-g-main-equivalent} and \eqref{eq-parl-gas density}, for $0\le r\le R(t)$, we have 
\begin{equation}\label{eq-pro2-5} 
v_g=\frac{\Thermal}{\Adi\Heatcapa}\parl_r\Paren{\frac{1}{\gdens}}-\frac{r}{3\Adi}\frac{\dot{p}}{p}\ \text{and}\ \parl_t\gdens=\frac\Thermal{\Adi\Heatcapa}\Laplace_r\log\gdens+\frac{\dot{p}}{3\Adi p}r\parl_r\gdens+\frac{\dot{p}}{\Adi p}\gdens.
\end{equation}
Thus, equation \eqref{eq-main-syst-1} follows. Taking the time derivative of both sides of \eqref{eq-main-syst-4}, we obtain $\dot{p}p^{-1}=\parl_t\gdens(R,t)\gdens(R,t)^{-1}+\ptR\parl_r\gdens(R,t)\gdens(R,t)^{-1}$. Evaluating \eqref{eq-pro2-5} at $r=R(t)$ and using the kinematic boundary condition \eqref{eq-pro2-2-1}, we derive \eqref{eq-main-syst-2}. Finally, combining \eqref{eq-main-syst-4}  and \eqref{eq-pro2-4} leads to the boundary condition \eqref{eq-main-syst-3}. This completes the proof.   
\end{proof}  

We note that a regular solution $(\gdens(r,t),R(t))$ to system \eqref{eq-main-syst} corresponds a spherically symmetric solution $(v_{l},p_l,\gdens,v_{g},p_g,T_g,s)$ to system \eqref{eq-l-main}--\eqref{eq-b-main}, where the velocity $v_{l}$ is given by formula \eqref{eq-pro2-3}.
%\begin{align*}
%\Omn(t) &= B_{R(t)},\quad \bR(t)=\Paren{R(t)^3+\bV}^{1/3},\quad  \overline{\Omn(t)}\cup\Omp(t) = B_{\bR(t)},\quad p_g(t)= \GASCONST \Tconst \gdens(R(t),t),\\
%\gdens (r,t) &= \gdens(r,t),\quad 
%v_{g}(r,t)= \frac{\Thermal}{\Adi \Heatcapa} \parl_r\Paren{\frac1{\gdens(r,t)}} - \frac{\dot{p}_g(t)}{p_g(t)}\frac{r}{3\Adi},&\quad 0\le r\le R(t),\\
%T_g(r,t) &= \frac{p_g(t)}{\GASCONST  \gdens(r,t)},\quad 
%s(r,t)= \Heatcapa \log\Paren{\frac{p_g(t)}{(\gdens(r,t))^\Adi}},&\quad 0\le r\le R(t),\\
%v_{l}(r,t) &= \frac{(R(t))^2\ptR (t)}{r^2},&\quad R(t)\le r\le \bR(t),\\
%p_l(r,t) &= p_g(t)-4\mu_l  \frac{\ptR }{R}-\frac{2\sigma}{R}+\ldens \Paren{\frac {\ptR^2}{2}-\frac {R^4\ptR^2}{2r^4}-\frac{r-R}{r}\Paren{2\ptR^2 + R \pttR}},&\quad R(t)\le r\le \bR(t).
%\end{align*}    

In the following  proof and the rest of the paper, the external radius will always be denoted by 
\begin{equation*} 
\bR=\sqrt[3]{R^3+\bV},
\end{equation*} 
where $\bV=3V/4\pi$ represents the modified liquid volume and $R$ denotes a generic radius of the gas bubble. 

\begin{proof}[Proof of Theorem \ref {t-equilibria}]
According to Proposition \ref{p-main-syst}, it suffices to compute the steady solutions to system \eqref{eq-main-syst}. Setting $\parl_t\gdens=\ptR=0$,
we obtain from \eqref{eq-main-syst-1} and \eqref{eq-main-syst-2} that $R(t)\equiv\ssr$ (equilibrium gas radius), $\Laplace\log\gdens=0$ in $B_{\ssr}$, and $\parl_r\gdens(\ssr)=0$.
Then it follows that $\gdens(r)\equiv\ssd$ (equilibrium gas density). 
This, combined with  \eqref{eq-main-syst-3} yields $\ssd=\frac{2}{\GASCONST\Tconst}\Paren{\frac{\sigma}{\ssr}+\frac{\barsigma}{\ssor}}$.
Moreover, the conservative mass of the gas can be expressed by the pair $(\ssd,\ssr)$, i.e., $M=\frac{4\pi}{3}\ssd\ssr^3$.
Therefore, for any mass-volume pair $(M,V)\in(0,\infty)^2$, the equilibrium $(\ssd,\ssr)$ is determined by the following
algebraic equations: 
\begin{equation}\label{eq-equi-eq}
\frac{4\pi}{3}\ssd\ssr^3=M\ \text{and}\
\ssd=\frac{2}{\GASCONST\Tconst}\Paren{\frac{\sigma}{\ssr}+\frac{\barsigma}{\ssor}}. 
\end{equation} 
We denote 
\begin{equation*}
I=\frac{3\GASCONST\Tconst M}{8\pi\sigma}\ \text{and}\ \surftenratio=\frac{\barsigma}{\sigma}>0.
\end{equation*} 
Then, \eqref{eq-equi-eq} is equivalent to
\begin{equation}\label{eq-equi-eq-2} 
\frac{1}{\ssr}+\frac{\surftenratio}{\ssor}=\frac{I}{\ssr^3}\ \text{or}\ \frac{\surftenratio^3}{\ssr^3+\bV}=\Paren{\frac{I-\ssr^2}{\ssr^3}}^3,
\end{equation}  
where $\bV=3V/4\pi$.
The equilibrium radius $\ssr$ (if it exists) is exactly a positive real root to the polynomial
\begin{equation}\label{eq-9-poly}
\mathbb{P}(x)=(\surftenratio^{3}+1)x^{9}-3Ix^{7}+\bV x^{6}+3I^{2}x^{5}-3I\bV x^{4}-I^{3}x^{3}+3I^{2}\bV x^{2}-I^{3}\bV.
\end{equation}
At the same time, $\ssr$  satisfies $\ssr\in(\sqrt{I}/\sqrt{1+\surftenratio},\sqrt{I})$ by using \eqref{eq-equi-eq-2}. 

From these observations, we introduce a function $\mathbb{K}(w)=\mathbb{P}(w\sqrt{I})$, where the variable $w\in [1/\sqrt{1+\surftenratio},1]$, and a direct calculation shows
\begin{equation*}
\mathbb{K}(w)=-\Bracket{\Paren{I^{-\frac 12} w^{-3}-I^{-\frac 12}w^{-1}}^{3}\Paren{I^{\frac{3}{2}}w^{3}+\bV}-\surftenratio^{3}}I^{\frac{9}{2}}w^{9}\triangleq-\mathbb{L}(w)I^{\frac{9}{2}}w^{9}.
\end{equation*}
We note that the function $\mathbb{L}(w)$ is strictly decreasing, since for $w\in \Paren{1/\sqrt{1+\surftenratio},1}$, it holds
\begin{equation*}
\mathbb{L}^\prime(w)=3I^{-\frac{3}{2}}\Bracket{-2I^{\frac{3}{2}}w^{3}-\bV(3-w^2)}\Paren{w^{2}-1}^{2}w^{-10}<0. 
\end{equation*}
Furthermore, we have 
\begin{equation*} \mathbb{L}(1/\sqrt{1+\surftenratio})=\bV I^{-\frac{3}{2}}(1+\surftenratio)^{\frac 32}\surftenratio^3>0\ \text{and}\ \mathbb{L}(1)=-\surftenratio^3<0.
\end{equation*}
In the above, we emphasize that $\mathbb{L}(1/\sqrt{1+\surftenratio})$ is positive regardless of the modified volume $\bV$. 

We conclude that $\mathbb{L}$ has a unique positive solution in $(1/\sqrt{1+\surftenratio},1)$. Therefore, the same result holds for $\mathbb{K}(w)$. Coming back to $\mathbb{P}(x)$, the ninth-degree polynomial, it possesses a unique positive root $\ssr$ within $(\sqrt{I}/\sqrt{1+\surftenratio},\sqrt{I})$, which is uniquely determined by  $M$ and $V$, with the other parameters being constants. Thus, the map $\ssr[M,V]$ is well-defined and satisfies
\begin{equation}\label{eq-Rstar-bound}
\sqrt{\frac{3\GASCONST\Tconst M}{8\pi\sigma}}\bigg/\sqrt{1+\frac{\barsigma}{\sigma}}<\ssr[M,V]<\sqrt{\frac{3\GASCONST\Tconst M}{8\pi\sigma}}.
\end{equation}
The smoothness of the map $\ssr[M,V]$ is the consequence of the smooth dependence of a simple root to the polynomial $\mathbb{P}$ on its coefficients.

Once we determine the equilibrium gas radius $\ssr[M,V]$, the equilibrium density is expressed as
\begin{equation}\label{eq-rhostar}
\ssd[M,V]=\frac{3M}{4\pi\ssr[M,V]^3}=\frac2{\GASCONST\Tconst}\Paren{\frac{\sigma}{\ssr[M,V]}+\frac{\barsigma}{\ssor[M,V]}},
\end{equation} 
and the equilibrium gas pressure $p$ follows by using \eqref{eq-main-syst-4}. Then, we can recover the corresponding steady state solution to system \eqref{eq-l-main}--\eqref{eq-b-main}. That is, $\VV_{g}=\VV_{l}\equiv\Zero$, $T_{g}\equiv\Tconst$, $s\equiv\Heatcapa\log (p/\ssd^\Adi)$, and $p_{l}\equiv2\barsigma/\ssor$. %(see also Proposition \ref{p-full} for similar calculations)

In turn, given any equilibrium state $(\ssd,\ssr)$, we can specify the mass of the gas and liquid volume by \eqref{eq-equi-eq-2} since the coefficients of both $\bV$ and $(\ssr^2-I)^3$ are nonzero. More precisely, there exists a one-to-one correspondence between $(\ssd,\ssr)$ and $(M,V)$
\begin{equation}\label{eq-one-to-one}
\Paren{M,V}=\frac{4\pi\ssr[M,V]^3}{3}\Paren{\ssd[M,V],\Bracket{\frac{2\sigma^2}{\Paren{\GASCONST\Tconst \ssd[M,V]\ssr[M,V]-2\sigma}\barsigma}}^3-1}.  
\end{equation} 
This completes the proof.  
\end{proof}

\begin{remark}
If there is no surface tension on the external free boundary (i.e., $\barsigma=0$), equations \eqref{eq-equi-eq} reduce to a quadratic function. In this case, the equilibrium radius simplifies to $\ssr=\sqrt{\frac{3\GASCONST\Tconst M}{8\pi\sigma}}$, which is independent of the liquid volume. Consequently, \eqref{eq-one-to-one} no longer holds. Nevertheless, $\ssr$ remains the unique positive root of \eqref{eq-9-poly} and corresponds to the critical case of \eqref{eq-Rstar-bound}.
\end{remark}

The local well-posedness, global well-posedness, and the Lyapunov stability for system \eqref{eq-main-syst} are detailed in Appendix \ref{appendix-2}. In the next section, after introducing the physical energy, we will prove the energy dissipation and show that any equilibrium solution acts as a local minimizer of the energy functional.
\section{Energy dissipation and local minimizers of the energy functional}\label{s:energy}

The physical energy of a general solution (without any symmetric assumption) to the approximate model \eqref{eq-l-main}--\eqref{eq-b-main} is given by $\Energy(t)=\Energy_1(t)+\Energy_2(t)$. The energy $\Energy_1(t)$ denotes the Helmholtz free energy (see, e.g., \cite{Landau2013})
\begin{equation*}
\Energy_1(t)=\Heatcapa\int_{\Omn(t)}\gdens T_gdx-\Tconst\int_{\Omn(t)}\gdens sdx,
\end{equation*} 
The energy $\Energy_2(t)$ includes both the kinetic energy of the liquid and the surface energy, encompassing the gas-liquid interface as well as the external free boundary of the gas-liquid region  
\begin{equation*}
\Energy_2(t)=\frac12\int_{\Omp(t)}\ldens|\VV_l|^2dx+\sigma\int_{\parl\Omn(t)}dS+\barsigma\int_{\parl\Omp(t)\setminus\parl\Omn(t)}dS.
\end{equation*} 
\subsection{Energy dissipation law for general solutions to problem \eqref{eq-l-main}--\eqref{eq-b-main}}\label{s:energy dissipation} 

Recalling that the liquid volume is finite and $\overline{\Omn(t)\cup\Omp(t)}$ is bounded, we compute the associated energy dissipation.
\begin{proposition} 
\label{p-energy-dissipation}
For  free boundary problem \eqref{eq-l-main}--\eqref{eq-b-main}, we have the following energy dissipation law:
\begin{equation}\label{eq-energy-dissipation}
\frac{d}{dt}\Energy(t)=-\frac{\mu_l}{2}\int_{\Omp(t)}\module{\nabla\VV_l+\nabla\VV_l^\top}^2dx-\Thermal\Tconst\int_{\Omn(t)}\frac{\module{\nabla\gdens}^2}{\gdens^2}dx.
\end{equation} 
\end{proposition}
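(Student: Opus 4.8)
The plan is to differentiate each of the two energy contributions $\Energy_1$ and $\Energy_2$ in time, using the transport theorem over the time-dependent domains $\Omn(t)$ and $\Omp(t)$, and then combine all boundary terms so that those on $\parl\Omn(t)$ cancel by the interface conditions \eqref{eq-b-main-1}--\eqref{eq-b-main-3} and those on the external boundary $\parl\Omp(t)\setminus\parl\Omn(t)$ cancel by \eqref{eq-b-main-4}--\eqref{eq-b-main-5}, leaving only the two manifestly nonpositive bulk integrals. Throughout I would write the normal velocity of each interface as $v_-$ (on $\parl\Omn$) and $v_+$ (on $\parl\Omp\setminus\parl\Omn$), matching $\VV_l\cdot\normal$ and $\VV_g\cdot\normal$ there, and use Reynolds transport $\frac{d}{dt}\int_{\Omega(t)}f\,dx=\int_{\Omega(t)}\parl_t f\,dx+\int_{\parl\Omega(t)}f\,v_\Gamma\,dS$.

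First I would handle $\Energy_2$. The kinetic term: $\frac{d}{dt}\frac12\int_{\Omp(t)}\ldens|\VV_l|^2\,dx=\int_{\Omp(t)}\ldens\VV_l\cdot\parl_t\VV_l\,dx+\frac12\int_{\parl\Omp(t)}\ldens|\VV_l|^2 v_\Gamma\,dS$; substitute the momentum equation \eqref{eq-l-main-1} for $\ldens\parl_t\VV_l$, move the convective term to combine with the transport boundary term, and integrate by parts the viscous and pressure terms to produce $-\mu_l\int_{\Omp(t)}|\nabla\VV_l|^2\,dx$ (which after using $\Divergence\VV_l=0$ becomes $-\frac{\mu_l}{2}\int|\nabla\VV_l+\nabla\VV_l^\top|^2$, exactly as in the symmetric reduction; one uses $\int|\DDD(\VV_l)|^2=\frac12\int|\nabla\VV_l|^2$ for divergence-free fields with the boundary term kept) together with boundary integrals of $\normal\cdot(-p_l\identity+2\mu_l\DDD(\VV_l))\cdot\VV_l$ over $\parl\Omn(t)$ and $\parl\Omp(t)\setminus\parl\Omn(t)$. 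The surface-energy terms contribute $\frac{d}{dt}\sigma|\parl\Omn(t)|=-\sigma\int_{\parl\Omn(t)}\meancurv\,v_-\,dS$ and similarly $-\barsigma\int_{\parl\Omp\setminus\parl\Omn}\meancurv\,v_+\,dS$ (the standard first variation of area, with sign dictated by $\meancurv=-\tangentdiv\normal$ and the orientation conventions fixed above). Then \eqref{eq-b-main-2} and \eqref{eq-b-main-5} are precisely what is needed to pair the stress boundary integrals with these curvature integrals; the $p_g$ appearing in \eqref{eq-b-main-2} leaves over a term $-\int_{\parl\Omn(t)}p_g\,v_-\,dS$ that must be absorbed by $\Energy_1$.

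Next I would differentiate $\Energy_1$. Using \eqref{eq-g-main-1}, \eqref{eq-g-main-3}--\eqref{eq-g-main-5} and $p_g=p_g(t)$, rewrite $\Heatcapa\gdens T_g-\Tconst\gdens s$ and differentiate; the clean way is to recall the thermodynamic identity that for this gas the Helmholtz free energy density has the property that its material derivative combines into $\parl_t(\text{density})+\Divergence(\VV_g\cdot(\text{density}))$ plus a dissipative piece. Concretely, from \eqref{eq-g-main-3} with $s=\Heatcapa\log(p_g/\gdens^\Adi)$ one gets the entropy production identity, and after using the pressure equation \eqref{eq-gas-velocity}/\eqref{eq-main-syst-1}-type relation and $\Divergence(\Thermal\nabla T_g)$, integration by parts over $\Omn(t)$ yields $-\Thermal\Tconst\int_{\Omn(t)}|\nabla\gdens|^2/\gdens^2\,dx$ (using $T_g=T_l=\Tconst$ on $\parl\Omn(t)$ to kill the boundary term from the heat flux, and $\nabla(1/\gdens)\cdot\normal$ type manipulations), together with a boundary term $+\int_{\parl\Omn(t)}p_g\,v_-\,dS$ that exactly cancels the leftover from $\Energy_2$ — plus possibly a term proportional to $\dot p_g$ times $\frac{d}{dt}|\Omn(t)|$ which vanishes since total gas mass is conserved and one can check $\frac{d}{dt}\int_{\Omn}\gdens\,dx=0$ forces the $\dot p_g$ contributions to telescope. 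The main obstacle is exactly this $\Energy_1$ computation: correctly tracking the thermodynamics so that everything collapses to the single Fisher-information-type integral $-\Thermal\Tconst\int|\nabla\gdens|^2/\gdens^2$ and the boundary term matches $\Energy_2$; the heart of it is the algebraic identity that $\Heatcapa T_g - \Tconst s$ differentiated against the continuity and heat equations produces a perfect divergence plus the stated dissipation, which I would verify by expanding $\log$ derivatives and repeatedly using \eqref{eq-g-main-4}. Once the two $p_g v_-$ boundary terms cancel and the external-boundary stress/curvature terms cancel via \eqref{eq-b-main-5}, summing gives \eqref{eq-energy-dissipation}.
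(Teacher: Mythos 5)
Your overall strategy (differentiate $\Energy_1$ and $\Energy_2$ with the transport theorem, integrate by parts, cancel interface and external-boundary terms via \eqref{eq-b-main-1}--\eqref{eq-b-main-5}) is the same as the paper's, and your treatment of $\Energy_2$ — including keeping the boundary correction that turns $\int\module{\nabla\VV_l}^2$ into $\tfrac12\int\module{\nabla\VV_l+\nabla\VV_l^\top}^2$ and pairing the stress integrals with the first variation of area — is essentially the paper's computation. The gap is in the $\Energy_1$ step, exactly the place you flag as the main obstacle, and two of your specific claims there are wrong. First, the heat-flux boundary term is \emph{not} killed by $T_g=\Tconst$ on $\parl\Omn(t)$: integrating $-\Tconst\Thermal\int_{\Omn(t)}\Laplace T_g/T_g\,dx$ by parts leaves $-\Tconst\Thermal\int_{\parl\Omn(t)}\frac{\nabla T_g\cdot\normal}{T_g}dS$, and the boundary condition only lets you replace $1/T_g$ by $1/\Tconst$; the flux $-\Thermal\int_{\parl\Omn(t)}\nabla T_g\cdot\normal\,dS$ survives and is in fact the essential term. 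Second, the $\dot p_g$ contribution is $\frac{\dot p_g}{\Adi-1}\module{\Omn(t)}$ (coming from $\Heatcapa\int\gdens T_g\,dx=\frac{\Heatcapa}{\GASCONST}p_g\module{\Omn(t)}$), and it does not vanish by mass conservation — $\frac{d}{dt}\int_{\Omn}\gdens\,dx=0$ gives no relation involving $\dot p_g$.

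The correct mechanism, which your sketch is missing, is to convert the surviving flux term back to a bulk integral, $-\Thermal\int_{\parl\Omn(t)}\nabla T_g\cdot\normal\,dS=-\Thermal\int_{\Omn(t)}\Laplace T_g\,dx$, write $T_g=p_g/(\GASCONST\gdens)$ (legitimate since $p_g$ is spatially constant), and invoke the pressure equation \eqref{eq-gas-velocity}; this evaluates the flux as $-\frac{\dot p_g}{\Adi-1}\module{\Omn(t)}-\frac{\Adi}{\Adi-1}p_g\int_{\parl\Omn(t)}\VV_l\cdot\normal\,dS$. The $\dot p_g$ pieces then cancel against $\frac{\dot p_g}{\Adi-1}\module{\Omn(t)}$, and the net boundary contribution of $\dot\Energy_1$ is $-p_g\int_{\parl\Omn(t)}\VV_l\cdot\normal\,dS$, which cancels the $+p_g\int_{\parl\Omn(t)}\VV_l\cdot\normal\,dS$ left over in $\dot\Energy_2$ (note your sign allocation of these two terms is also reversed: the gas does work $+p_g v_-$ on the liquid side). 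If instead the flux term were discarded as you propose, the only $p_g$ boundary term available from $\Energy_1$ would carry the coefficient $\frac{1}{\Adi-1}$ rather than $1$, the $\dot p_g\module{\Omn(t)}$ term would remain, and the cancellation with $\Energy_2$ would fail; so as written the argument does not close, even though the intended identity \eqref{eq-energy-dissipation} and the surrounding framework are right.
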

%It gives me the energy bound by integrating the time $t$ and the bound of the symmetric gradient of the velocity and $\nabla\gdens/\gdens$.
\begin{proof} 
Given that boundary conditions \eqref{eq-b-main-1} and \eqref{eq-b-main-4} are satisfied, we recall the transport formulas for the time-dependent regions
\begin{align*}
\frac{d}{dt}\int_{\Omn(t)}fdx&=\int_{\Omn(t)}\Paren{\parl_t+\VV_g\cdot\nabla}f+\Divergence\VV_g fdx,\\
\frac{d}{dt}\int_{\Omp(t)}fdx&=\int_{\Omp(t)}\Paren{\parl_t+\VV_l\cdot\nabla}fdx,
\end{align*} 
and the analogous formula on the liquid moving surface $\parl\Omp(t)$, which encompasses both the  gas-liquid interface $\parl\Omn(t)$ and the external free boundary $\parl\Omp(t)\setminus\parl\Omn(t)$ (see, e.g., \cite{Shatah2008a})
\begin{equation*}
\frac{d}{dt}\int_{\parl\Omp(t)}fdS=\int_{\parl\Omp(t)}\Paren{\parl_t+\VV_l\cdot\nabla}f+\tangentdiv \VV_lfdS.
\end{equation*} 
These,  along with \eqref{eq-l-main-1}, \eqref{eq-b-main-2}, \eqref{eq-b-main-5}, integration by parts formula, and the divergence theorem on the moving surface, yield
\begin{align*}
\dot\Energy_2(t)
={}&\ldens\int_{\Omp(t)}\Paren{\parl_t\VV_l+\VV_l\cdot\nabla\VV_l}\cdot\VV_ldx+\sigma\int_{\parl\Omn(t)}\tangentdiv \VV_ldS+\barsigma\int_{\parl\Omp(t)\setminus\parl\Omn(t)}\tangentdiv \VV_ldS\\
%={}&\int_{\Omp(t)\setminus \Omn(t)}(\mu_l\Laplace\VV_l-\nabla p_l)\cdot\VV_ldx-\sigma\int_{\parl\Omn(t)}\meancurv \Paren{\VV_l\cdot\normal}dS+\barsigma\int_{\parl\Omp(t)\setminus\parl\Omn(t)}\meancurv \Paren{\VV_l\cdot\normal}dS\\
={}&\mu_l\int_{\Omp(t)}\Laplace\VV_{l,i}\VV_l^idx-\int_{\Omp(t)}\parl_ip_l\VV_l^idx+\sigma\int_{\parl\Omn(t)}\meancurv\Bracket{\VV_l\cdot\Paren{-\normal}}dS\\
&+\barsigma\int_{\parl\Omp(t)\setminus\parl\Omn(t)}\meancurv \Paren{\VV_l\cdot\normal}dS\\
={}&-\mu_l\int_{\Omp(t)}|\nabla\VV_l|^2dx+\mu_l\int_{\parl\Omn(t)}(-\normal^j)\parl_j\VV_{l,i}\VV_l^idS\\
&+\mu_l\int_{\parl\Omp(t)\setminus\parl\Omn(t)}\normal^j\parl_j\VV_{l,i}\VV_l^idS-\int_{\parl\Omn(t)}-\normal_ip_l\VV_l^idS-\int_{\parl\Omp(t)\setminus\parl\Omn(t)}\normal_ip_l\VV_l^idS\\
&-\sigma\int_{\parl\Omn(t)}\meancurv \Paren{\VV_l\cdot\normal}dS+\barsigma\int_{\parl\Omp(t)\setminus\parl\Omn(t)}\meancurv\Paren{\VV_l\cdot\normal}dS\\
={}&-\mu_l\int_{\Omp(t)}|\nabla\VV_l|^2dx+\int_{\parl\Omn(t)}\Paren{p_l\normal-\mu_l\normal^j\parl_j\VV_{l}-\sigma\meancurv\normal}\cdot\VV_l dS\\
&-\int_{\parl\Omp(t)\setminus\parl\Omn(t)} \Paren{p_l\normal-\mu_l \normal^j\parl_j\VV_{l}+\barsigma\meancurv}\cdot\normal dS\\
={}&-\mu_l\int_{\Omp(t)}|\nabla\VV_l|^2dx+p_g\int_{\parl\Omn(t)}\VV_l\cdot\normal dS+\mu_l\int_{\parl\Omn(t)}\parl_j\VV_l^k\VV_{l}^j\normal_kdS\\
&-\mu_l\int_{\parl\Omp(t)\setminus\parl\Omn(t)}\parl_j\VV_l^k\VV_{l}^j\normal_kdS\\
%={}&-\mu_l\int_{\Omp(t)}\Paren{|\nabla\VV_l|^2+\parl_j\VV_l^k\parl_k\VV_{l}^j}dx+p_g\int_{\parl\Omn(t)}\VV_l\cdot\normal dS\\  
={}&-\mu_l\int_{\Omp(t)}\nabla\VV_l:\Paren{\nabla\VV_l+\nabla\VV_l^\top}dx+p_g\int_{\parl\Omn(t)}\VV_l\cdot\normal dS.
\end{align*}
For the energy $\Energy_1(t)$, we apply the transport formula, gas system \eqref{eq-g-main}, boundary conditions \eqref{eq-b-main-1} and \eqref{eq-b-main-3} to obtain
\begin{align*}
\dot\Energy_1(t)
%={}&\frac{d}{dt}\Paren{\frac{\Heatcapa}{\GASCONST}p_g\int_{\Omn(t)}1dx-\Tconst\int_{\Omn(t)}\gdens sdx}\\
={}&\frac{\Heatcapa}{\GASCONST}\dot p_g\int_{\Omn(t)}1dx+\frac{\Heatcapa}{\GASCONST}p_g\int_{\Omn(t)}\Divergence\VV_gdx-\Tconst\int_{\Omn(t)}\gdens \Paren{\parl_ts+\VV_g\cdot\nabla s}dx\\
={}&\frac{\dot p_g}{\Adi-1}\int_{\Omn(t)}1dx+\frac{p_g}{\Adi-1}\int_{\Omn(t)}\Divergence\VV_gdx-\Tconst\Thermal\int_{\Omn(t)}\frac{\Laplace T_g}{T_g}dx\\
={}&\frac{\dot p_g}{\Adi-1}\int_{\Omn(t)}1dx+\frac{p_g}{\Adi-1}\int_{\parl\Omn(t)}\VV_g\cdot\normal dS\\
&-(-1)\Tconst\Thermal\int_{\Omn(t)}-\frac{\module{\nabla T_g}^2}{T_g^2}dx-\Tconst\Thermal\int_{\parl\Omn(t)}\frac{\nabla T_g\cdot\normal}{T_g}dS\\
={}&\frac{\dot p_g}{\Adi-1}\int_{\Omn(t)}1dx-\Tconst\Thermal\int_{\Omn(t)}\frac{\module{\nabla\gdens}^2}{\gdens^2}dx+\frac{p_g}{\Adi-1}\int_{\parl\Omn(t)}\VV_l\cdot\normal dS\\
&-\Thermal\int_{\parl\Omn(t)}\nabla T_g\cdot\normal dS,
\end{align*}
where we have used the relation $\Adi=1 + {\GASCONST}/{\Heatcapa}$.
Noting that
\begin{equation*}
\Thermal\int_{\parl\Omn(t)}\nabla T_g\cdot\normal dS=\Thermal\int_{\Omn(t)}\Laplace T_gdx=\frac{p_g}{\GASCONST}\int_{\Omn(t)}\Thermal\Laplace\Paren{\gdens^{-1}}dx,
\end{equation*}
and recalling \eqref{eq-gas-velocity}, we deduce
\begin{align*}
\Thermal\int_{\parl\Omn(t)}\nabla T_g\cdot\normal dS&=\frac{\Heatcapa}{\GASCONST}\dot p_g\int_{\Omn(t)}1dx+\frac{\Heatcapa\Adi}{\GASCONST}p_g\int_{\Omn(t)}\Divergence\VV_gdx\\
&=\frac{\dot p_g}{\Adi-1}\int_{\Omn(t)}1dx+\frac{\Adi}{\Adi-1}p_g\int_{\parl\Omn(t)}\VV_l\cdot\normal dS.
\end{align*}
Collecting the above calculations, energy dissipation \eqref{eq-energy-dissipation} follows since the algebraic identity $\nabla\VV_l:\Paren{\nabla\VV_l+\nabla\VV_l^\top}=\module{\nabla\VV_l+\nabla\VV_l^\top}^2/2$ holds. 
\end{proof}

\subsection{Equilibria as local minimizers of the energy functional relative to proportional perturbations}\label{s:local minimizer} 

To establish the nonlinear stability, we will show that any equilibrium density-radius pair $(\ssd[M,V],\ssr[M,V])$, determined by the mass-volume pair $(M,V)$, serves as a local minimizer of the energy functional $E(t)$ with respect to mass-persevering and volume-invariant perturbations.

For this problem, in the case of an infinite liquid volume and an external forcing term $p_{\infty}(t)$,  Biro and Vel\'azquez \cite[Lemma 4.2]{Biro2000} used Taylor's theorem with Peano's remainder form to expand the total energy at the equilibrium state up to quadratic terms and derived the coercivity energy estimate. Based on this estimate, the Lyapunov stability was established \cite[Theorem 4.1]{Biro2000}. More recently, Lai and Weinstein \cite[Theorem 7.5]{Lai2023} extended these results to scenarios with more general far-field pressure conditions. The key coercive energy estimate \cite[(7.17)]{Lai2023} was obtained under sufficiently small perturbations, which depend on the mass of the gas bubble.

We extend the results to the case where the liquid volume $V$ is finite. This is a significant improvement as we have removed the assumptions, regarding the upper and lower bounds of the gas density and the bubble radius, made in previous studies (such as $C^{-1}\le \gdens,R\le C$ in \cite{Biro2000,Lai2023}), and now allow for relatively larger perturbations, as long as they are within a certain proportion of the equilibrium density (see \eqref{eq-minimizer-condition}). It should also be noted that the constant $\delta_0$ in \eqref{eq-minimizer-condition} is independent of the constants $\Tconst,\ldens$, the equilibrium state $(\ssd,\ssr)$, the mass $M$, and the volume $V$. This is a crucial result that enables us to prove the main stability results, specifically \eqref{eq-main- results-i} in Theorem \ref{t-main result}. 

The strategy we have adopted is as follows. We first consider a specific proportion of disturbance to the equilibrium density. Since the mass of the gas and the volume of the liquid remain constant, the amplitude of the disturbance in the bubble radius will be determined by the change in gas density. Next, we utilize Taylor's theorem to approximate the energy functional near the equilibrium state. We then apply the Lagrangian form of the remainder, which provides the most precise expression for the nonlinear terms. This is because the bulk terms $I_7, I_8$, and $I_9$ (in the second-order derivative below) are related to the liquid volume. Peano's form of the remainder cannot provide the necessary control, as the liquid volume can be arbitrarily small.

\begin{theorem}\label{t-minimizer}
There exists a constant $\delta_0>0$ depending only on $\Adi$, such that the following holds: given any equilibrium $(\ssd,\ssr)=(\ssd[M,V],\ssr[M,V])$ with mass $M$ and liquid volume $V$, for any mass-persevering and volume-invariant spherically symmetric state $(\gdens(r,t),R(t),\bR(t)=\Paren{R(t)^3+\bV}^{1/3})$, such that 
\begin{equation}\label{eq-minimizer-condition}
\norm{\frac{\gdens(\cdot,t)-\ssd}{\ssd}\Paren{1+\module{\log\ssd}}}_{L^\infty(B_{R(t)})}\le\delta_0,
\end{equation}
then, we have
\begin{align}
\Energy(t)\ge{}&\Energy_{\dagger}+\frac{1}{4}\left\lbrace M\Heatcapa\Tconst\Paren{\frac{\brho(1,t)-\ssd}{\ssd}-\frac{3}{4\pi\ssd}\int_{B_{1}}\brho(y,t)-\ssd dy}^2\right.\nonumber\\
&\qquad\quad\ \ +\frac{\ldens\ssr^5}{4\pi\ssd^2}\Paren{1-\frac{\ssr}{\ssor}}\Paren{\int_{B_{1}}\dot\brho(y,t)dy}^2+\frac{\GASCONST\Tconst\ssr^3}{3\ssd}\int_{B_{1}}\module{\brho(y,t)-\ssd}^2dy\nonumber\\
&\qquad\quad\ \ +\left.\frac{\ssr^3}{\pi\ssd^2}\Bracket{\frac{\sigma}{2\ssr}+\frac{\barsigma}{\ssor}\Paren{1-\frac{\ssr^3}{2\ssor^3}}}\Paren{\int_{B_{1}}\brho(y,t)-\ssd dy}^2\right\rbrace,\label{eq-minimizer}
\end{align}
where $\Energy(t)=\Energy[\brho(\cdot,t),R(t)]$ with $\brho(y,t)=\gdens(R(t)|y|,t)$, $\Energy_{\dagger}=\Energy[\ssd,\ssr]$, and $t>0$ is arbitrary. Moreover, $(\gdens(r,t),R(t))$ is not restricted to be a solution of system \eqref{eq-main-syst}.
\end{theorem}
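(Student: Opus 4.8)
\textbf{Proof strategy for Theorem \ref{t-minimizer}.} The plan is to expand the energy functional $\Energy[\brho,R]$ around the equilibrium $(\ssd,\ssr)$ along the one-parameter path determined by the mass-preserving and volume-invariant constraints, then use the Lagrange form of Taylor's remainder to absorb all cubic and higher-order contributions into the quadratic terms. First I would rewrite $\Energy$ in terms of $\brho(y,t)=\gdens(R(t)|y|,t)$ on the fixed ball $B_1$: using $p_g=\GASCONST T_g\gdens$, $s=\Heatcapa\log(p_g/\gdens^\Adi)$, and the fact that in the approximate model $p_g$ is spatially constant while $T_g$ satisfies $p_g=\GASCONST T_g\gdens$, the free energy $\Energy_1$ becomes an explicit functional of $\brho$ alone (modulo the constant $p_g=p_g(t)$ pinned by the interface value $\brho(1,t)$ through \eqref{eq-main-syst-4}); the kinetic term $\frac12\int_{\Omp}\ldens|\VV_l|^2$ reduces, via the spherically symmetric profile \eqref{eq-pro2-3}, to an expression in $R,\ptR,\bR,\ptbR$, and hence — using mass conservation $M=\frac{4\pi}{3}\int_{B_1}\brho(y,t)R^3 dy$ to relate $\ptR$ to $\int_{B_1}\dot\brho\,dy$ — into a functional of $\brho$ and $\dot\brho$; the two surface terms are elementary functions of $R$ and $\bR=(R^3+\bV)^{1/3}$.

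The key algebraic reduction is the mass constraint: since $M$ and $V=\frac{4\pi}{3}\bV$ are fixed, $R=R(t)$ is a (smooth, monotone) function of the scalar $m(t):=\frac{3}{4\pi\ssd}\int_{B_1}\brho(y,t)\,dy$, with $R=\ssr$ exactly when $m=1$; I would introduce the normalized fluctuation and its interface trace, write everything as a function $F$ of $m$ and of the profile $\brho/\ssd$, and observe that by construction the first-order term in the Taylor expansion of $\Energy$ at the equilibrium vanishes — this is where Theorem \ref{t-equilibria}'s equilibrium equations \eqref{eq-equi-eq} enter, forcing the linear functional to be zero on mass-preserving, volume-invariant variations. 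The second-order term is then computed directly; it splits into the four manifestly nonnegative quadratic forms on the right of \eqref{eq-minimizer} times a factor $1/2$, plus the nine bulk remainder pieces $I_1,\dots,I_9$ alluded to in the text (those arising from differentiating the $\log$-type integrands in $\Energy_1$ and the $r^{-4}$-type integrands in the kinetic energy twice at an intermediate point). The factor $1/4$ rather than $1/2$ in the statement is precisely the slack left over after bounding these remainders.

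The main obstacle — and the reason Peano's remainder is insufficient — is controlling the Lagrange remainder uniformly in the liquid volume $V$, which may be arbitrarily small. The dangerous terms are those coupling to $\bV$: in $\Energy_1$ the entropy/temperature integrals contain $\log\brho$, whose second variation at an intermediate density $\brho_\theta=\ssd+\theta(\brho-\ssd)$ produces factors $\brho_\theta^{-1}$ and $\brho_\theta^{-2}$ that could blow up if $\brho$ drops near zero; in the kinetic term, $\bR^{-4}$-type factors are bounded (since $\bR\geq\bV^{1/3}$) but the ratio $R/\bR\to1$ as $\bV\to0$ must be tracked. The resolution is exactly the hypothesis \eqref{eq-minimizer-condition}: the weight $1+|\log\ssd|$ is engineered so that $\delta_0$ small (depending \emph{only} on $\Adi$) forces $\brho_\theta$ to stay in a fixed ratio band around $\ssd$ and makes $|\log\brho_\theta-\log\ssd|$ comparably small after the weight is divided out, so that each remainder piece $|I_j|$ is bounded by (const$(\Adi)\cdot\delta_0$) times the corresponding positive quadratic form — the scale-invariant structure of the functional (everything homogeneous in $\ssd$ once one normalizes by $\ssd$, $\ssr^3$, etc.) guarantees no hidden dependence on $\Tconst,\ldens,M,V$. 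Absorbing these remainders into the quadratic forms, with the constant chosen so that $1/2-C(\Adi)\delta_0\geq1/4$, yields \eqref{eq-minimizer}. I would carry out the computation of the second variation and the remainder bounds term by term, keeping the nine $I_j$'s organized by which integrand they come from, since bookkeeping — not any single estimate — is the real difficulty.
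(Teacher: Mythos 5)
Your proposal is correct and follows essentially the same route as the paper's proof: rewrite $\Energy$ on the fixed ball $B_1$, move along the one-parameter mass-constrained family joining $(\ssd,\ssr)$ to $(\brho,R)$, kill the first variation with the equilibrium identities \eqref{eq-equi-eq}, identify the second variation with the four quadratic forms in \eqref{eq-minimizer}, and absorb the Lagrange (third-order) remainder under the weighted smallness condition \eqref{eq-minimizer-condition}, uniformly in the liquid volume. The only points your sketch leaves implicit --- that the weight $1+\module{\log\ssd}$ is needed because uncancelled coefficients of size $\module{\log\ssd}$ survive in the cubic remainder (not because the logarithm of the intermediate density must be close to $\log\ssd$, which already follows from the ratio band), and that the kinetic-energy remainder must itself be shown to carry the degenerating factor $1-\ssr/\ssor$ before it can be absorbed into the corresponding quadratic form --- are precisely the explicit computations the paper performs, and they do not change your approach.
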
  
\begin{proof}    
Setting $x=R(t)y$ for $y\in \overline{B_1}$, the domain $\overline{B_{R(t)}}$ is fixed
to $\overline{B_1}$. Defining  $\brho(|y|,t)=\gdens(R(t)|y|,t)$ and using \eqref{eq-g-main-4}, we have $p=\GASCONST\Tconst\brho(1,t)$. Then, the energy $\Energy_1(t)$ becomes 
\begin{align*}
\Energy_1(t)&=\frac{4\pi\Heatcapa}{3\GASCONST}p_gR^3-\Heatcapa\Tconst\GASMASS[\gdens,R]\log p_g+\Heatcapa\Adi\Tconst\int_{B_R}\gdens\log\gdens dx\\
&=\frac{4\pi\Heatcapa\Tconst}3\brho(1,t)R(t)^3-\Heatcapa\Tconst M\log\Bracket{\GASCONST\Tconst\brho(1,t)}+\Heatcapa\Adi\Tconst R(t)^3\!\!\int_{B_1}\!\!\brho(x,t)\log\brho(x,t)dx.
\end{align*}
Combined with the energy $\Energy_2(t)$ in the spherically symmetric scenario, i.e.,
\begin{equation*}
\Energy_2(t)=2\pi\ldens R^3\ptR^2-2\pi\ldens R^4\ptR^2/\bR+4\pi\sigma R(t)^2+4\pi\barsigma\bR(t)^2,
\end{equation*} 
we obtain that $\Energy(t)=\Energy[\brho(x,t),R(t)]$ is a functional of $(\brho(x,t),R(t))$ since $\bR(t)$ can be expressed by $R(t)$ and $V$
\begin{align*}
\Energy(t)={}&\frac{4\pi\Heatcapa\Tconst}3\brho(1,t)R^3-\Heatcapa\Tconst M\log(\GASCONST\Tconst)-\Heatcapa\Tconst M\log\brho(1,t)+\Heatcapa\Adi\Tconst R^3\int_{B_1}\brho\log\brho\\
&+2\pi\ldens R^3\ptR^2-2\pi\ldens R^4\ptR^2/\bR+4\pi(\sigma R^2+\barsigma\bR^2),
\end{align*} 
where we omit $dx$ and abbreviate the notations $R=R(t),\brho=\brho(x,t)$, etc.

Fix any $t>0$, we set $\brho_{\delta}(\cdot,t)=\ssd+\delta\Errorrho(\cdot,t)$ where $0\le\delta\le 1$ and $\norm{\Errorrho(\cdot,t)}_{L^\infty(B_1)}\le\ssd/2$. Then, there exists a unique $R_\delta=R_{\delta}(t)>0$ such that $R_{\delta}(t)^3\int_{B_{1}}\brho_{\delta}(y,t)=M$, where $M=\frac{4\pi}{3}\ssd\ssr^3$.  Define $\gdens_\delta(x,t)=\brho_\delta(x/R_\delta,t)$, and we have
\begin{equation}\label{eq-mini-0}
\norm{\gdens_\delta(\cdot,t)-\ssd}_{L^\infty(B_{R_\delta(t)})}\le{\delta\ssd}/{2}.
\end{equation}  
Also, $(\gdens_\delta,R_\delta)$ satisfies $\GASMASS[\gdens_\delta(\cdot,t),R_\delta(t)]=M$, i.e.,
\begin{equation*}
\frac{4\pi\ssd}{3}\ssr^3=M=R_{\delta}(t)^3\int_{B_{1}}\Paren{\ssd+\delta\Errorrho(y,t)}=R_{\delta}(t)^3\Paren{\frac{4\pi\ssd}{3}+\delta\int_{B_{1}}\Errorrho(y,t)}.
\end{equation*}
Then, we have
\begin{equation}\label{eq-mini-1}
R_{\delta}(t)^3%=\frac{\ssr^3}{1+\frac{3\delta}{4\pi\ssd}\int_{B_{1}}\Errorrho(y,t)}
=\frac{\ssr^3}{1+\delta\phi(t)}\ \text{where}\ \phi(t)=\frac{3}{4\pi\ssd}\int_{B_{1}}\Errorrho(y,t).
\end{equation}
Also, it holds  
\begin{equation*}
\dot R_\delta(t)=\frac{-\delta\ssr\dot\phi(t)}{3\Paren{1+\delta\phi(t)}^{\frac 43}}\ \text{and}\ \bar{R}_\delta(t)=\Paren{\frac{\ssr^3}{1+\delta \phi(t)}+\bV}^{\frac 13}.
\end{equation*}
Next, we define $f(\delta)=f(\delta,t)=\Energy[\ssd+\delta\Errorrho(\cdot,t),R_\delta(t),]$ for $\delta\in[0,1]$ and it holds 
\begin{align*}
f(\delta)%={}& \frac{4\pi \Heatcapa \Tconst}3  (\ssd + \delta\Errorrho(1,t))R_\delta^3 - \Heatcapa\Tconst M\log(\GASCONST \Tconst) - \Heatcapa\Tconst M \log (\ssd + \delta\Errorrho(1,t)) \\
%&+ \Heatcapa\Adi \Tconst R_\delta^3 \int_{B_1} (\ssd + \delta\Errorrho) \log(\ssd + \delta\Errorrho)+ 2\pi\ldensR_\delta^3 \dot R_\delta^2-2\pi \ldens R_\delta^4 \dot R_\delta^2/\bar{R}_\delta + 4\pi (\sigma R_\delta^2+\barsigma \bar{R}_\delta^2)\\
={}&\frac{4\pi \Heatcapa\Tconst}3(\ssd+\delta\Errorrho(1,t))\frac{\ssr^3}{1+\delta \phi(t)}-\Heatcapa\Tconst M\log(\GASCONST\Tconst)-\Heatcapa\Tconst M\log(\ssd+\delta\Errorrho(1,t))\\
&+\frac{\Heatcapa\Adi\Tconst\ssr^3}{1+\delta\phi(t)}\int_{B_1}(\ssd+\delta\Errorrho)\log(\ssd+\delta\Errorrho)+2\pi\ldens\frac{\ssr^3}{1+\delta\phi(t)}\Paren{\frac{-\delta\ssr\dot\phi(t)}{3\Paren{1+\delta\phi(t)}^{\frac 43}}}^2\\
&-2\pi\ldens\Paren{\frac{\ssr}{\Paren{1+\delta\phi(t)}^{\frac 13}}}^4 \Paren{\frac{-\delta\ssr\dot\phi(t)}{3\Paren{1+\delta\phi(t)}^{\frac 43}}}^2\Paren{\frac{\ssr^3}{1+\delta\phi(t)}+\bV}^{-\frac 13}\\
&+4\pi\Bracket{\sigma\Paren{\frac{\ssr}{\Paren{1+\delta\phi(t)}^{\frac13}}}^2+\barsigma\Paren{\frac{\ssr^3}{1+\delta\phi(t)}+\bV}^{\frac23}}.
\end{align*} 

From Taylor's theorem with Lagrangian remainder, we have $f(\delta)=f(0)+f^\prime(0)\delta+{f^{''}(0)}\delta^2/2+{f^{'''}(\xi(\delta))\delta^3}/{6}$, where $0\le\xi(\delta)\le\delta$.  Direct calculations show that
%\crmk{comment out $f^\prime(\delta)$ below}
%\begin{align*}
%&f^\prime(\delta)\\
%={}& \frac{4\pi \ssr^3 }{3}\frac{\Heatcapa \Tconst}{1+\delta \phi}\Errorrho(1,t)-\frac{4\pi \ssr^3 }3  \frac{\Heatcapa \Tconst(\ssd + \delta\Errorrho(1,t))\phi}{\Paren{1+\delta \phi}^2}- \frac{4\pi \ssr^3}{3}\ssd\Heatcapa\Tconst \frac{\Errorrho(1,t)}{\ssd + \delta\Errorrho(1,t)} \\
%&- \Heatcapa\Adi \Tconst \frac{\ssr^3\phi}{\Paren{1+\delta \phi}^2} \int_{B_1} (\ssd + \delta\Errorrho) \log(\ssd + \delta\Errorrho)+ \Heatcapa\Adi \Tconst \frac{\ssr^3}{1+\delta \phi} \int_{B_1}\Paren{\Errorrho\log(\ssd + \delta\Errorrho)+\Errorrho}\\
%&+ 4\pi\ldens\delta \frac{\ssr^3}{1+\delta \phi}\Paren{\frac{\ssr\dot\phi}{3\Paren{1+\delta \phi}^{\frac 43}}}^2+ 2\pi\ldens \delta^2 \frac{d}{d\delta}\Bracket{\frac{\ssr^3}{1+\delta \phi}\Paren{\frac{\ssr\dot\phi}{3\Paren{1+\delta \phi}^{\frac 43}}}^2}\\
%&-4\pi \ldens\delta \Paren{\frac{\ssr}{\Paren{1+\delta \phi}^{\frac 13}}}^4 \Paren{\frac{\ssr\dot\phi}{3\Paren{1+\delta \phi}^{\frac 43}}}^2\Paren{\frac{\ssr^3}{1+\delta \phi}+\bV}^{-\frac 13} \\
%&-2\pi \ldens \delta^2  \frac{d}{d\delta}\Bracket{\Paren{\frac{\ssr}{\Paren{1+\delta \phi}^{\frac 13}}}^4 \Paren{\frac{\ssr\dot\phi}{3\Paren{1+\delta \phi}^{\frac 43}}}^2\Paren{\frac{\ssr^3}{1+\delta \phi}+\bV}^{-\frac 13}}\\
%&-\frac{2}{3}4\pi \Bracket{\sigma \ssr^2 \frac{\phi}{\Paren{1+\delta \phi}^{\frac 53}}+\barsigma\frac{\ssr^3\phi}{(1+\delta \phi)^2} \Paren{\frac{\ssr^3}{1+\delta \phi}+\bV}^{-\frac 13}}.
%\end{align*}
%Therefore,
\begin{align*}
f^\prime(0)={}&\frac{4\pi\ssr^3}{3}\Heatcapa\Tconst\Errorrho(1,t)-\frac{4\pi\ssr^3}3\Heatcapa \Tconst\ssd\phi-\frac{4\pi\ssr^3}{3}\Heatcapa\Tconst\Errorrho(1,t)-\Heatcapa\Adi\Tconst\ssr^3\phi\int_{B_1}\ssd\log\ssd\\
&+\Heatcapa\Adi\Tconst\ssr^3\int_{B_1}\Paren{\Errorrho\log(\ssd)+\Errorrho}-\frac{8}{3}\pi\Bracket{\sigma\ssr^2{\phi}+\barsigma{\ssr^3\phi}\Paren{{\ssr^3}+\bV}^{-\frac 13}}\\
={}&-\frac{4\pi\ssr^3}3\Heatcapa \Tconst\ssd\frac{3}{4\pi\ssd}\int_{B_{1}}\Errorrho-\Heatcapa\Adi\Tconst\frac{4\pi\ssr^3}{3}\frac{3}{4\pi\ssd}\int_{B_{1}}\Errorrho\ssd\log\ssd+\Heatcapa\Adi\Tconst\ssr^3\int_{B_1}\Errorrho\\
&+\Heatcapa\Adi\Tconst\ssr^3\log\ssd\int_{B_1}\Errorrho-\frac{4\pi}{3}\ssr^3\Paren{2\sigma/\ssr+2\barsigma/\ssor}\frac{3}{4\pi\ssd}\int_{B_{1}}\Errorrho\\
={}&(\Adi-1)\Heatcapa\Tconst\ssr^3\int_{B_{1}}\Errorrho-\ssr^3\ssd^{-1}\Paren{2\sigma/\ssr+2\barsigma/\ssor}\int_{B_{1}}\Errorrho\\
={}&\Bracket{\GASCONST\Tconst-\ssd^{-1}\Paren{2\sigma/\ssr+2\barsigma/\ssor}}\ssr^3\int_{B_{1}}\Errorrho=0,
\end{align*}
where we have used $\Adi=1+\GASCONST/\Heatcapa$ and \eqref{eq-equi-eq}. For $f^{''}(0)$, one has
\begin{align*}
f^{''}(\delta)={}&-\frac{2M\Heatcapa\Tconst\Errorrho(1,t)}{\ssd(1+\delta\phi)^2}\phi+\frac{2M\Heatcapa\Tconst}{\ssd}\frac{\ssd+\delta\Errorrho(1,t)}{\Paren{1+\delta\phi}^3}\phi^2+\frac{M\Heatcapa\Tconst\Errorrho(1,t)^2}{\Paren{\ssd+\delta\Errorrho(1,t)}^2} \\
&+\Paren{\frac{2\Heatcapa\Adi\Tconst\ssr^3}{\Paren{1+\delta\phi}^3}\int_{B_1}(\ssd+\delta\Errorrho)\log(\ssd+\delta\Errorrho)}\phi^2\\
&-\Paren{\frac{2\Heatcapa\Adi\Tconst\ssr^3}{\Paren{1+\delta\phi}^2}\int_{B_1}\Paren{\Errorrho\log(\ssd+\delta\Errorrho)+\Errorrho}}\phi+\frac{\Heatcapa\Adi\Tconst\ssr^3}{1+\delta\phi}\int_{B_1}\Paren{\frac{\Errorrho^2}{\ssd+\delta\Errorrho}}\\
&+4\pi\ldens\Bracket{\frac{\ssr^5}{9\Paren{1+\delta\phi}^{\frac{11}{3}}}-\frac{\ssr^6}{9\Paren{1+\delta\phi}^{4}}\Paren{\frac{\ssr^3}{1+\delta\phi}+\bV}^{-\frac13}}\dot\phi^2 \\
&+8\pi\ldens\delta\frac{d}{d\delta}\Bracket{\frac{\ssr^5}{9\Paren{1+\delta\phi}^{\frac{11}{3}}}-\frac{\ssr^6}{9\Paren{1+\delta\phi}^{4}}\Paren{\frac{\ssr^3}{1+\delta\phi}+\bV}^{-\frac 13}}\dot\phi^2\\
&+2\pi\ldens\delta^2\frac{d^2}{d^2\delta}\Bracket{\frac{\ssr^5}{9\Paren{1+\delta\phi}^{\frac{11}{3}}}-\frac{\ssr^6}{9\Paren{1+\delta\phi}^{4}}\Paren{\frac{\ssr^3}{1+\delta\phi}+\bV}^{-\frac13}}\dot\phi^2\\
&+\frac{8\pi}{3}\bigg[\frac{5\sigma\ssr^2}{3\Paren{1+\delta\phi}^{\frac83}}+\frac{2\barsigma\ssr^3}{(1+\delta\phi)^3}\Paren{\frac{\ssr^3}{1+\delta \phi}+\bV}^{-\frac 13}\\
&\qquad\quad-\frac{\barsigma\ssr^6}{3(1+\delta\phi)^4}\Paren{\frac{\ssr^3}{1+\delta\phi}+\bV}^{-\frac43}\bigg]\phi^2\triangleq\sum_{i=1}^{10}I_i.
\end{align*}
Therefore, setting $\delta=0$ yields
\begin{align*}
f^{''}(0)
%={}& -2{M}{\Heatcapa \Tconst \frac{\Errorrho(1,t)}{\ssd}\phi}+2{M}\Heatcapa \Tconst\phi^2+ M\Heatcapa\Tconst \frac{\Errorrho(1,t)^2}{\ssd^2}+2\Heatcapa\Adi \Tconst {\ssr^3}\Paren{\int_{B_1} \ssd \log\ssd} \phi^2 \\
%&- 2\Heatcapa\Adi \Tconst {\ssr^3}\Paren{\int_{B_1}\Paren{\Errorrho\log(\ssd)+\Errorrho}}\phi+ \Heatcapa\Adi \Tconst {\ssr^3}\int_{B_1}\frac{\Errorrho^2}{\ssd}\\
%&+ \frac{4\pi\ldens\ssr^5}{9}\Paren{1-\frac{\ssr}{\ssor}}\dot\phi^2-\frac{8\pi}{3} \Bracket{-\frac{5}{3}\sigma \ssr^2-2\barsigma{\ssr^3}/\ssor+\frac{1}{3}\barsigma{\ssr^6}/\ssor^4}{\phi^2}\\
={}&-2{M}{\Heatcapa\Tconst\frac{\Errorrho(1,t)}{\ssd}\phi}+2M\Heatcapa\Tconst\phi^2+M\Heatcapa\Tconst\frac{\Errorrho(1,t)^2}{\ssd^2}+2\Adi M\Heatcapa\Tconst\log\ssd{\phi^2}  \\
&-2\Adi M\Heatcapa\Tconst\log\ssd{\phi^2}-2M\Heatcapa\Adi\Tconst{\phi^2}+\Heatcapa\Adi\Tconst{\ssr^3}{\ssd}^{-1}\int_{B_1}{\Errorrho^2}\\
&+\frac{4\pi\ldens\ssr^5}{9}\Paren{1-\frac{\ssr}{\ssor}}\dot\phi^2+\frac{M}{\ssd}\Bracket{\frac{10\sigma}{3\ssr}+4\frac{\barsigma}{\ssor}-\frac{2}{3}\frac{\barsigma}{\ssor}{\frac{\ssr^3}{\ssor^3}}}\phi^2\\
={}&M\Heatcapa\Tconst\Paren{\frac{\Errorrho(1,t)}{\ssd}-\phi}^2+\frac{4\pi\ldens\ssr^5}{9}\Paren{1-\frac{\ssr}{\ssor}}\dot\phi^2\\
&-M\Paren{\Heatcapa+2\GASCONST}\Tconst{\phi^2}+\Heatcapa\Adi\Tconst{\ssr^3}{\ssd}^{-1}\int_{B_1}{\Errorrho^2}+\frac{M}{\ssd}\Bracket{\frac{10\sigma}{3\ssr}+4\frac{\barsigma}{\ssor}-\frac{2}{3}\frac{\barsigma}{\ssor}{\frac{\ssr^3}{\ssor^3}}}\phi^2\\
\ge{}&M\Heatcapa\Tconst\Paren{\frac{\Errorrho(1,t)}{\ssd}-\frac{3}{4\pi\ssd}\int_{B_{1}}\Errorrho}^2+\frac{4\pi\ldens\ssr^5}{9}\Paren{1-\frac{\ssr}{\ssor}}\dot\phi^2\\
&-\Paren{\Heatcapa+\GASCONST-\vare\GASCONST}\Tconst{\ssr^3}{\ssd}^{-1}\int_{B_{1}}\Errorrho^2+\Paren{\Heatcapa+\GASCONST}\Tconst{\ssr^3}{\ssd}^{-1}\int_{B_1}{\Errorrho^2}\\
&-M(\GASCONST+\vare\GASCONST)\Tconst{\phi^2}+(1+\vare)\frac{M}{\ssd}\Bracket{\frac{2\sigma}{\ssr}+\frac{2\barsigma}{\ssor}}\phi^2\\
&+\frac{M}{\ssd}\Bracket{\frac{4\sigma}{3\ssr}\Paren{1-\frac{3}{2}\vare}+\frac{2\barsigma}{\ssor}\Paren{1-\frac{\ssr^3}{3\ssor^3}-\vare}}\phi^2\\
\ge{}&M\Heatcapa\Tconst\Paren{\frac{\Errorrho(1,t)}{\ssd}-\frac{3}{4\pi\ssd}\int_{B_{1}}\Errorrho}^2+\frac{\ldens\ssr^5}{4\pi\ssd^2}\Paren{1-\frac{\ssr}{\ssor}}\Paren{\int_{B_{1}}\dot\Errorrho}^2+\vare\frac{\GASCONST\Tconst\ssr^3}{\ssd}\int_{B_{1}}\Errorrho^2\\
&+\frac{\ssr^3}{\pi\ssd^2}\Bracket{\frac{\sigma}{\ssr}\Paren{1-\frac{3}{2}\vare}+\frac{\barsigma}{\ssor}\Paren{\frac32-\frac{\ssr^3}{2\ssor^3}-\frac{3}{2}\vare}}\Paren{\int_{B_{1}}\Errorrho}^2,
\end{align*} 
where $\vare\in \Paren{0,2/3}$ and we have used $\Adi=1+\GASCONST/\Heatcapa$,  \eqref{eq-equi-eq}, \eqref{eq-mini-1}, and Cauchy's inequality $\phi^2\le\frac{3}{4\pi\ssd^2}\int_{B_{1}}\Errorrho^2$. Chosen $\vare=1/3$, it holds
\begin{align*}
\frac{1}{2}f^{''}(0)\delta^2\ge{}& \frac{M\Heatcapa\Tconst}{2}\Paren{\frac{\Errorrho(1,t)}{\ssd}-\frac{3}{4\pi\ssd}\int_{B_{1}}\Errorrho}^2\delta^2+\frac{\ldens\ssr^5}{8\pi\ssd^2}\Paren{1-\frac{\ssr}{\ssor}}\Paren{\int_{B_{1}}\dot\Errorrho}^2\delta^2\\
&+\frac{\GASCONST\Tconst{\ssr^3}}{6\ssd}\int_{B_{1}}\Errorrho^2\delta^2+\frac{\ssr^3}{2\pi\ssd^2}\Bracket{\frac{\sigma}{2\ssr}+\frac{\barsigma}{\ssor}\Paren{1-\frac{\ssr^3}{2\ssor^3}}}\Paren{\int_{B_{1}}\Errorrho}^2\delta^2\\
\triangleq&J_1\delta^2+J_2\delta^2+J_3\delta^2+J_4\delta^2. 
\end{align*}

To deal with $f^{'''}$, using $\delta\le1$ and $\norm{\Errorrho(\cdot,t)}_{L^\infty(B_1)}\le\ssd/2$, we note that all the denominators of $I_i$ are bounded, i.e.,  $1/2\le1+\xi(\delta)\phi\le3/2$, $\ssd/2\le\ssd+\xi(\delta)\Errorrho(r,t)\le3\ssd/2$. 
With this lower bound, all the terms in  $f^{'''}(\xi(\delta))\delta^3/6$ coming from $I_1$, $I_2$, $I_4$, $I_5$, $I_6$ and $I_{10}$ can be absorbed into $J_3\delta^2$ and $J_4\delta^2\ge \frac{\ssr^3}{4\pi\ssd^2}\Paren{\frac{\sigma}{\ssr}+\frac{\barsigma}{\ssor}}\Paren{\int_{B_{1}}\Errorrho}^2\delta^2$ for $\delta\le\delta_0^1/\Paren{1+\module{\log\ssd}}$, where $\delta_0^1>0$ depends only on $\Heatcapa/\GASCONST$. We show the calculation of $I_1,I_5,I_{10}$ and the others are similar or easier. The derivatives of the first two terms can be absorbed by using \eqref{eq-equi-eq}
\begin{align*}
\module{\frac{dI_1}{d\delta}(\xi(\delta))\delta^3}
%&=\module{\frac{4M \Heatcapa \Tconst \Errorrho(1,t)}{\ssd(1+\xi(\delta) \phi)^3} \Paren{\frac{3}{4\pi\ssd}}^2 \Paren{\int_{B_{1}}\Errorrho}^2}\delta^3\\
={}&\module{\frac{\GASCONST\Tconst\ssd}{\GASCONST}\frac{\Heatcapa\Errorrho(1,t)}{(1+\xi(\delta)\phi)^3\ssd}\frac{3\ssr^3}{\pi\ssd^2}\Paren{\int_{B_{1}}\Errorrho}^2}\delta^3
%\\
%\le{}&\frac{72\Heatcapa}{\GASCONST}\frac{\ssr^3}{4\pi\ssd^2} \Bracket{\frac{\sigma}{\ssr}+\frac{\barsigma}{\ssor}}\Paren{\int_{B_{1}}\Errorrho}^2\delta^3,\\
\le \Paren{\frac{72\Heatcapa}{\GASCONST}\delta}J_4\delta^2,\\
\module{\frac{dI_5}{d\delta}(\xi(\delta))\delta^3}={}&\left|-2\Paren{\frac{2\Heatcapa\Adi\Tconst\ssr^3}{\Paren{1+\xi(\delta)\phi}^3}\int_{B_1}\Paren{\Errorrho\log(\ssd+\xi(\delta)\Errorrho)+\Errorrho}}\phi^2\right.\\
&\left.+\Paren{\frac{2\Heatcapa\Adi\Tconst\ssr^3}{\Paren{1+\xi(\delta)\phi}^2}\int_{B_1}\Paren{\frac{\Errorrho^2}{\ssd+\xi(\delta)\Errorrho}}}\phi\right|\delta^3\\
\le{}&\frac{12\Heatcapa\Adi}{\GASCONST}\GASCONST\Tconst\ssd\frac{\ssr^3}{\pi\ssd^2}\max\Brace{\module{\log\frac{3\ssd}{2}},\module{\log\frac{\ssd}{2}}}\Paren{\int_{B_{1}}\Errorrho}^2\delta^3\\
&+\frac{12\Heatcapa\Adi}{\GASCONST}\GASCONST\Tconst\ssd\frac{\ssr^3}{\pi\ssd^2}\Paren{\int_{B_{1}}\Errorrho}^2\delta^3+\frac{12\Heatcapa\Adi}{\GASCONST}\frac{\GASCONST\Tconst \ssr^3}{3\ssd}\int_{B_1}\Errorrho^2\delta^3\\
%\le{}&\frac{200\Heatcapa\Adi  }{\GASCONST}\Paren{1+\module{\log\ssd}}\frac{\ssr^3}{4\pi\ssd^2} \Bracket{\frac{\sigma}{\ssr}+\frac{\barsigma}{\ssor}}\Paren{\int_{B_{1}}\Errorrho}^2\delta^3\\
%&+\frac{24\Heatcapa\Adi}{\GASCONST}\frac{\GASCONST \Tconst \ssr^3}{6\ssd}\int_{B_1}\Errorrho^2\delta^3\\
\le{}&\Bracket{\frac{200\Paren{\Heatcapa+\GASCONST}}{\GASCONST}\Paren{1+\module{\log\ssd}}\delta}J_4\delta^2+\Paren{\frac{24\Paren{\Heatcapa+\GASCONST}}{\GASCONST}\delta}J_4\delta^2.
\end{align*} 
For the derivative of $I_{10}$, it is sufficient to notice that $|\phi|\le 1/2$ from \eqref{eq-mini-1}, and   
\begin{align*}
\module{\frac{d}{d\delta}\Bracket{\Paren{\frac{\ssr^3}{1+\delta\phi}+\bV}^{-\frac13}}(\xi(\delta))}&=\module{\frac13\Paren{\frac{\ssr^3}{1+\xi(\delta)\phi}+\bV}^{-\frac43}\frac{\ssr^3\phi}{\Paren{1+\xi(\delta) \phi}^2}}\le\frac{100}{\ssor},\\
\module{\frac{d}{d\delta}\Bracket{\Paren{\frac{\ssr^3}{1+\delta\phi}+\bV}^{-\frac43}}(\xi(\delta))}&=\module{\frac43\Paren{\frac{\ssr^3}{1+\xi(\delta)\phi}+\bV}^{-\frac73}\frac{\ssr^3\phi}{\Paren{1+\xi(\delta) \phi}^2}}\le\frac{100}{\ssor^4},
\end{align*} 
where we have used 
\begin{equation*}
\Paren{\frac{\ssr^3}{1+\xi(\delta)\phi}+\bV}^{-\frac13}\le\Paren{\frac{2}{3}\ssr^3+\frac{2}{3}\bV}^{-\frac13}\le\Paren{\frac{3}{2}}^{\frac13}\ssor^{-1},
\end{equation*}
which is independent of the liquid volume $V\in (0,\infty)$. Then, $\frac{d}{d\delta}I_{10}(\xi(\delta))\delta^3/6$ can be absorbed into $J_4\delta^2$. 

For the derivative of $I_3$ which contains the term $\Errorrho(1,t)^3$, we have 
\begin{equation*}
\module{\frac{dI_3}{d\delta}(\xi(\delta))\delta^3}=\module{\frac{2M\Heatcapa\Tconst\Errorrho(1,t)^3}{\Paren{\ssd+\xi(\delta)\Errorrho(1,t)}^3}}\delta^3\le\Paren{32\delta}J_1\delta^2+\Paren{\frac{16\Heatcapa}{\GASCONST}\delta}J_3\delta^2,
%\\
%&\le \Bracket{32\frac{M\Heatcapa\Tconst}{2}\Paren{ \frac{\Errorrho(1,t)}{\ssd} - \frac3{4\pi\ssd} \int_{B_1} \Errorrho  }^2 + \frac{16\Heatcapa}{\GASCONST}\frac{\GASCONST\Tconst \ssr^3}{6\ssd}\int_{B_1} \Errorrho^2 }\delta^3,
\end{equation*}
where we have used
\begin{align*}
|\Errorrho(1,t)||\Errorrho(1,t)|^2
&\le\frac{\ssd}{2}\Bracket{\module{\frac{\Errorrho(1,t)}{\ssd}-\frac1{|B_1|\ssd}\int_{B_1}\Errorrho}+|B_1|^{-\frac12}\ssd^{-1}\Paren{\int_{B_1}\Errorrho^2}^{\frac12}}^2\ssd^2\\
&\le\ssd^3\Bracket{\Paren{\frac{\Errorrho(1,t)}{\ssd}-\frac1{|B_1|\ssd}\int_{B_1}\Errorrho}^2+|B_1|^{-1}\ssd^{-2}\Paren{\int_{B_1}\Errorrho^2}}.
\end{align*}
Therefore, this term can also be absorbed into $f^{''}(0)\delta^2/2$ for all $0<\delta<\delta_0^2$, where $\delta_0^2>0$ depends on $\Heatcapa/\GASCONST$. 

To deal with the derivatives of $I_7,I_8$ and $I_9$ containing $(\int_{B_{1}}\dot\Errorrho)^2$, we need to calculate the first three order derivative of $\ldens \ssr^5\psi(\delta)\dot\phi^2$ where
\begin{equation*}
\psi(\delta)=\Paren{1+\delta\phi}^{-\frac{11}{3}}-\ssr\Paren{1+\delta\phi}^{-4}\Paren{\ssr^3\Paren{1+\delta\phi}^{-1}+\bV}^{-\frac 13}.
\end{equation*} 
We only control $d\psi/d\delta $ since $d^2\psi/d^2\delta,d^3\psi/d^3\delta$ can be calculated similarly. By direct calculation, we obtain
\begin{equation*}
\frac{d\psi}{d\delta}(\delta)=\frac{\phi}{3(1+\delta\phi)^{\frac{14}{3}}}\Bracket{12\Paren{1+\bV\ssr^{-3}\Paren{1+\delta\phi}}^{-\frac13}-\Paren{1+\bV\ssr^{-3}\Paren{1+\delta\phi}}^{-\frac43}-11}.
\end{equation*}
%\begin{align*}
%&\frac{d\psi}{d\delta}(\delta)\\
%={}&-\frac{11}{3}\frac{\phi}{\Paren{1+\delta \phi}^{\frac {14}{3}}}+\frac{4\ssr\phi}{\Paren{1+\delta \phi}^{5}}\Paren{\frac{\ssr^3}{1+\delta \phi}+\bV}^{-\frac 13}-\frac{\ssr^4\phi}{3\Paren{1+\delta \phi}^{6}}\Paren{\frac{\ssr^3}{1+\delta \phi}+\bV}^{-\frac 43}\\
%={}&\frac{\phi}{3(1+\delta\phi)^{\frac{14}{3}}}\Paren{-11+12\Paren{1+\frac{\bV}{\ssr^3}\Paren{1+\delta \phi}}^{-\frac 13}-\Paren{1+\frac{\bV}{\ssr^3}\Paren{1+\delta \phi}}^{-\frac 43}}.
%\end{align*}
Therefore, from $|\phi|\le 1/2$ and $\frac{\ssor-\ssr}{\ssor}=1-\Paren{1+\bV\ssr^{-3}}^{-\frac{1}{3}}$, it follows that
\begin{align*}
\module{\frac{d\psi}{d\delta}(\xi(\delta))}
%\le{}&\module{\frac{\phi}{3(1-1/2)^{\frac{14}{3}}}}\left( \module{-12+12\Paren{1+\bV\ssr^{-3}\Paren{1+\xi(\delta) \phi}}^{-\frac 13}}\right. \\
%&\qquad\qquad\qquad\quad\left. +\module{1-\Paren{1+\frac{\bV}{\ssr^3}\Paren{1+\xi(\delta) \phi}}^{-\frac 43}}\right) \\
\le{}&6\Bracket{12\module{1-\Paren{1+\bV\ssr^{-3}\Paren{1+1/2}}^{-\frac13}}+\module{1-\Paren{1+\bV\ssr^{-3}\Paren{1+1/2}}^{-\frac43}}}\\
\le{}&6\Bracket{19\module{1-\Paren{1+\bV\ssr^{-3}}^{-\frac13}}+7\module{1-\Paren{1+\bV\ssr^{-3}}^{-\frac13}}}\\
\le{}&200\Paren{\frac{\ssor-\ssr}{\ssor}},
\end{align*}
and $\bV\ssr^{-3}\le \delta_0^3$
where $\delta_0^3>0$ is a small constant independent of $\Tconst,\ldens,\ssd,\ssr,M$ and $V$. Moreover, for all $\alpha>0$, we have 
\begin{equation*}
1-\Paren{1+\bV\ssr^{-3}\Paren{1+1/2}}^{-\frac\alpha3}\le1\le\frac{1}{1-\Paren{1+\delta_0^3}^{-\frac{1}{3}}}\frac{\ssor-\ssr}{\ssor},\quad\bV\ssr^{-3}\ge\delta_0^3.
\end{equation*}
We conclude that
\begin{equation*}
\module{\frac{d\psi}{d\delta}(\xi(\delta))}\le100\Bracket{\Paren{1-\Paren{1+\delta_0^3}^{-\frac{1}{3}}}^{-1}+1}\frac{\ssor-\ssr}{\ssor},
\end{equation*} 
for all $V$ and $\ssr$.  
Therefore, all the terms $\frac{d}{d\delta}(I_7+I_8+I_9)(\xi(\delta))\delta^3$  containing  $\dot\phi^2$ can be absorbed into $J_2\delta^2$, for all $\delta<\delta_0^4$ with $\delta_0^4$ independent of $\Tconst,\ldens,\ssd,\ssr,M$ and $V$. 

Summarizing the above results and recalling that we choose any $(\gdens_\delta(\cdot,t),R_\delta(t))$ satisfying \eqref{eq-mini-0}. We conclude that for all $(\gdens(\cdot,t),R(t))$ satisfying 
\begin{equation*}
\norm{\gdens(\cdot,t)-\ssd}_{L^\infty(B_{R(t)})}\le{\delta_0\ssd}/\Paren{1+\module{\log\ssd}},
\end{equation*} 
or equivalently, \eqref{eq-minimizer-condition}, 
where $\delta_0>0$ is a constant that depends on $\delta_0^i$ and hence only on $\Heatcapa/\GASCONST$, or $\Adi$, it holds
\begin{align*}
\Energy[\brho,R]\ge{}&\Energy_{\dagger}+\frac{1}{4}\left\lbrace M\Heatcapa\Tconst\Paren{\frac{\brho(1,t)-\ssd}{\ssd}-\frac{3}{4\pi\ssd}\int_{B_{1}}\Paren{\brho(y,t)-\ssd}}^2\right.\\
&\qquad\quad\ \ +\frac{\ldens\ssr^5}{4\pi\ssd^2}\Paren{1-\frac{\ssr}{\ssor}}\Paren{\int_{B_{1}}\dot\brho}^2+\frac{\GASCONST\Tconst{\ssr^3}}{3\ssd}\int_{B_{1}}\Paren{\brho(y,t)-\ssd}^2\\
&\qquad\quad\ \ +\left.\frac{\ssr^3}{\pi\ssd^2}\Bracket{\frac{\sigma}{2\ssr}+\frac{\barsigma}{\ssor}\Paren{1-\frac{\ssr^3}{2\ssor^3}}}\Paren{\int_{B_{1}}\Paren{\brho(y,t)-\ssd}}^2\right\rbrace,
\end{align*}
where we have used  $\brho(\cdot,t)=\brho_{\delta}(\cdot,t)=\ssd+\delta\Errorrho(\cdot,t)$. This completes the proof.
\end{proof}  
\section{Nonlinear and exponential asymptotic stability} \label{s:proof} 

\subsection{Proof of the nonlinear asymptotic stability} 

In this subsection, we prove the first part of main Theorem \ref{t-main result}, especially the asymptotic stability in \eqref{eq-main- results-i}. 

Given the equilibrium state $(\ssd[M,V],\ssr[M,V])$ determined by the mass-volume pair $(M,V)$ and any initial data $(\gdens_0,R_0,\ptR_0)$, since free boundary problem \eqref{eq-l-main}--\eqref{eq-b-main} is equivalent to system \eqref{eq-main-syst} in the spherical case, the energy dissipation \eqref{eq-energy-dissipation} reduces to
\begin{equation}\label{eq-energy-dissipation-sph}
\dot\Energy(\tau)=-\Thermal\Tconst\int_{B_{R(\tau)}}\frac{|\nabla_r\gdens(|x|,\tau)|^2}{\gdens(|x|,\tau)^2}dx-16\pi\mu_l\frac{\bV R(\tau)}{R(\tau)^3+\bV}\ptR(\tau)^2,
\end{equation} 
where $\nabla_r$ denotes the radial gradient. In the above, we have used the velocity formula \eqref{eq-pro2-3}, $\Omn(t)=B_{R(t)}$ and $\Omp(t)=B_{\bR(t)}\setminus\overline{B_{R(t)}}$. %with $\bR(t)=\sqrt[3]{R^3+\bV}$ and $\bV=3V/4\pi$. 

Integrating from $0$ to $t$ and applying the local minimizer \eqref{eq-minimizer} together with the Lyapunov stability \eqref{eq-2000-1}--\eqref{eq-2000-3}, we obtain
\begin{align*}
&\Thermal\Tconst\int_0^t\int_{B_{R(\tau)}}\frac{|\nabla_r\gdens(|x|,\tau)|^2}{\gdens(|x|,\tau)^2}dxd\tau+16\pi\mu_l\int_0^t\frac{\bV R(\tau)\ptR(\tau)^2}{R(\tau)^3+\bV}d\tau\\
\le{}&\Energy_0-\Energy_{\dagger}-\Paren{\Energy(t)-\Energy_{\dagger}}\\
\le{}&\Energy_0-\Energy_{\dagger},
\end{align*}
provided the constant $\eta_0>0$ is small enough, which is independent of the initial data. Also, we have denoted $\Energy_0=\Energy[\gdens_0,R_0]$ and $\Energy_{\dagger}=\Energy[\ssd,\ssr]$ in the above. By the regularity of $(\gdens(x,t),R(t))$ in \eqref{eq-2000-3}, we deduce that the time-dependent functions
\begin{equation*}
\int_{B_{R(\tau)}}\frac{|\nabla_r\gdens(|x|,\tau)|^2}{\gdens(|x|,\tau)^2}dx,\ \text{and}\ R(\tau)\ptR(\tau)^2\ge 0
\end{equation*} 
are uniformly continuous. This, combined with the bound $0\le\bV/\Paren{R(\tau)^3+\bV}\le1$, follows that the function  $\bV R(\tau)(\ptR(\tau))^2/\Paren{R(\tau)^3+\bV}$ 
is also uniformly continuous independent of the liquid volume $V$. 

The remaining proof of \eqref{eq-main- results-i} is similar to that of  \cite[Proposition 8.1]{Lai2023}, as we can apply Theorem \ref{t-equilibria}, Barbalat's lemmas in stability theory, and interpolations. This proof is valid because we have  shown that  the algebraic system \eqref{eq-equi-eq}, or equivalently, the equation
\begin{equation*}
(\surftenratio^{3}+1)R^{9}-3IR^{7}+\bV R^{6}+3I^{2}R^{5}-3I\bV R^{4}-I^{3}R^{3}+3I^{2}\bV R^{2}-I^{3}\bV=0,
\end{equation*}
has a unique positive root for any fixed liquid volume.  
\subsection{Proof of the exponential convergence rate}\label{s-exponential convergence}  

The exponential convergence in  Theorem \ref{t-main result} is established through the utilization of the center manifold theory in Appendix \ref{appendix-4}, where the relevant definitions are provided.

\vspace{6pt} 
\noindent \textbf{Step 1:} We start with transforming free boundary problem \eqref{eq-main-syst} into an equivalent system within an appropriate Banach space.  Specifically, we select $Z=\ell^2$ as specified in \eqref{eq-app-4-1}.

\begin{proposition}\label{p-infinite-ODE}
Under the assumptions of Theorem \ref{t-main result}, if we abbreviate the equilibrium state $(\ssd[M,V],\ssr[M,V])=(\ssd,\ssr)$, and decompose the global-in-time solution $(\gdens,R)$ as follows
\begin{equation}\label{eq-change-var}
\begin{cases}
\gdens(R(t)y,t)=\ssd+\Errorrho(y,t)=\ssd+\Errorrho_{1}(y,t)+\Errorrho_{2}(t),\ 0\le y\le1,&\ t>0,\\ 
\Errorrho_{2}(t)=\gdens(R(t),t)-\ssd,&\ t>0,\\
R(t)=\ssr+\ErrorR(t),&\ t>0,
\end{cases} 
\end{equation}  
then, system \eqref{eq-main-syst} is converted to the following initial-boundary value problem 
\begin{subnumcases}{\label{eq-IBVP}} 
\parl_t\Errorrho_{1}(y,t)=\bkappa\Laplace\Errorrho_{1}(y,t)-\Paren{1-\frac{1}{\Adi}}\dot\Errorrho_{2}+\Pi,\ 0\le y\le1,\ \Errorrho_{1}(1,\cdot)\equiv0, &$t>0$,\label{eq-uRz-u}\\
\ptErrorR=-\frac{\ssr}{\ssd}\Paren{\bkappa\parl_y\Errorrho_{1}(1,t)+\frac{1}{3\Adi}\dot\Errorrho_{2}}+\Phi,&$t>0$,\label{eq-uRz-R}\\
\Errorrho_{2}=\frac1{\GASCONST\Tconst}\Bracket{-\Paren{\frac{2\sigma}{\ssr^2}+\frac{2\barsigma\ssr^2}{\ssor^{4}}}\ErrorR+\frac{4\mu_l\bV}{\ssr(\ssr^3+\bV)}\dot\ErrorR+\ldens\tR \ddot\ErrorR}+\Psi,&$t>0$,\label{eq-uRz-z}
\end{subnumcases} 
with the initial condition
\begin{equation}\label{eq-uRZ-initial}
(\Errorrho_{1}(y,0),\Errorrho_{2}(0),\ErrorR(0),\dot\ErrorR(0))=(\gdens_0(R_0y)-\gdens_0(R_0),\gdens_0(R_0)-\ssd,R_0-\ssr,\ptR_0),
\end{equation}
for $0\le y\le 1$. Moreover, in system \eqref{eq-IBVP}, we denote $\tR=\ssr-{\ssr^2}/{\ssor}$, $\bkappa=\Thermal/(\ssr^2\ssd\Adi\Heatcapa)$, and the following nonlinear terms 
\begin{subequations}
\label{eq-F-G-H}
\begin{align}
\Pi={}&\frac{\Thermal}{\Adi\Heatcapa}\Bracket{\frac1{(\ssr+\ErrorR)^2(\ssd+\Errorrho(y,t))}-\frac1{\ssr^2\ssd}}\Laplace_y\Errorrho_{1}(y,t)\nonumber\\ 
&-\frac{\Thermal}{\Adi \Heatcapa}  \frac{|\nabla_y\Errorrho_{1}(y,t)|^2}{(\ssr+\ErrorR)^2(\ssd+\Errorrho(y,t))^2}+\frac1\Adi\frac{\dot\Errorrho_{2}}{\ssd+\Errorrho_{2}}\Paren{\frac13y\parl_y\Errorrho_{1}(y,t)+\Errorrho_{1}(y,t)},\label{eq-FGH-F}\\
\Phi={}&-\frac{\Thermal}{\Adi\Heatcapa}\Bracket{\frac1{(\ssr+\ErrorR)(\ssd+\Errorrho_{2})^2}-\frac1{\ssr\ssd^2}}\parl_y\Errorrho_{1}(1,t)-\frac{\ErrorR\dot\Errorrho_{2}}{3\Adi(\ssd+\Errorrho_{2})}\nonumber\\
&+\frac{\ssr}{3\Adi}\frac{\Errorrho_{2}\dot\Errorrho_{2}}{\ssd(\ssd+\Errorrho_{2})},\label{eq-FGH-G}\\
\Psi={}&\frac1{\GASCONST\Tconst}\left[4\mu_l\bV\Paren{\frac{1}{R(R^3+\bV)}-\frac{1}{\ssr(\ssr^3+\bV)}}\dot\ErrorR+\Paren{\frac{2\sigma}{\ssr^2}\ErrorR+\frac{2\sigma}{\ssr+\ErrorR}-\frac{2\sigma}{\ssr}}\right.\nonumber\\
&\qquad\quad\left.+\Paren{\frac{2\barsigma\ssr^2}{\ssor^{4}}\ErrorR+\frac{2\barsigma}{\bR}-\frac{2\barsigma}{\ssor}}\right]\nonumber\\
&+\frac{\ldens}{\GASCONST\Tconst}\Bracket{\ErrorR\ddot\ErrorR+\Paren{\frac{\ssr^2}{\ssor}\ddot\ErrorR-\frac{R^2}{\bR}\ddot\ErrorR}+\Paren{\frac32-\frac{2R}{\bR}+\frac{R^4}{2\bR^4}}\ptR^2}.\label{eq-FGH-H}
\end{align}
\end{subequations} 

We denote the normalized radial Dirichlet eigenfunctions defined on the unit ball $B_1$ by $\Brace{\Xi_j(y)=\frac{\sin(j\pi y)}{\sqrt{2\pi}y}}_{j=1}^{\infty}$, which satisfies $-\Laplace_y\Xi_j=\eigv_j\Xi_j$ with $\eigv_j=(j\pi)^2,\Xi_j|_{y=1}=0$, and $\int_{B_1}\Xi_j^2(|x|)dx=1$  for $j\ge 1$. We expand $\Errorrho_{1}$ as 
\begin{equation}\label{eq-u-expansion}
\Errorrho_{1}(y,t)=\sum_{j=1}^\infty \theta_j(t)\Xi_j(y).
\end{equation} 

Then, system \eqref{eq-IBVP} is further equivalent to the following infinite dimensional dynamical system for  $\zz=\begin{bmatrix}\Errorrho_{2}&\ErrorR&\ptErrorR&\theta_1&\theta_2&\cdots\end{bmatrix}^\top$. That is,
\begin{equation}\label{eq-Lw+Nw}
\dot\zz=\Linearpart\zz+\Nonlpart(\zz,\dot\zz)=\Linearpart\zz+\Nonlpart^1(\zz)\dot\zz+\Nonlpart^0(\zz),
\end{equation}  
where both the linear operator $\Linearpart$ and the nonlinear term $\Nonlpart(\zz,\dot\zz)$ are defined in \eqref{eq-N(w,}. The terms $\Nonlpart^1$ and $\Nonlpart^0$ are given in \eqref{eq-def-N1N0}. Finally, the initial condition of problem \eqref{eq-Lw+Nw} can be deduced from the original condition \eqref{eq-uRZ-initial}.
\end{proposition}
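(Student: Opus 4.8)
The plan is to read the statement as an equivalence-of-formulations assertion and prove it by direct substitution: first insert the ansatz \eqref{eq-change-var} into the reduced system \eqref{eq-main-syst}, accounting carefully for the spatial rescaling induced by the moving boundary; then isolate the part that is affine-linear in $(\Errorrho_1,\Errorrho_2,\ErrorR,\ptErrorR,\ddot\ErrorR)$ with equilibrium coefficients; and finally pass to the Dirichlet eigenbasis and solve the implicit relations for the time derivatives. Most of this is bookkeeping; the one genuinely structural point is invertibility, discussed at the end.

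First I would fix $y\in[0,1]$, set $u(y,t)=\gdens(R(t)y,t)=\ssd+\Errorrho_1(y,t)+\Errorrho_2(t)$, and record the identities forced by $r=R(t)y$: $(\parl_r\gdens)(r,t)|_{r=R(t)y}=R(t)^{-1}\parl_y u$, hence $(r\parl_r\gdens)(r,t)|_{r=R(t)y}=y\parl_y u$ and $(\Laplace_r\log\gdens)(r,t)|_{r=R(t)y}=R(t)^{-2}\Laplace_y(\log u)$, together with the time-change identity $\parl_t u=(\parl_t\gdens)(R(t)y,t)+\frac{\ptR}{R}y\parl_y u$. Substituting these into \eqref{eq-main-syst-1}--\eqref{eq-main-syst-3}, using that $\Errorrho_2$ is spatially constant (so $\Laplace_y u=\Laplace_y\Errorrho_1$, $\nabla_y u=\nabla_y\Errorrho_1$), that $p=\GASCONST\Tconst(\ssd+\Errorrho_2)$ hence $\dot p/p=\dot\Errorrho_2/(\ssd+\Errorrho_2)$ from \eqref{eq-main-syst-4}, and the equilibrium relations \eqref{eq-equi-eq}, one rewrites each equation as (equilibrium-coefficient linear term) plus a remainder that is quadratic or higher. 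The coefficient $\bkappa=\Thermal/(\ssr^2\ssd\Adi\Heatcapa)$ is produced by linearising $\frac{\Thermal}{\Adi\Heatcapa R^2}\Laplace_y(\log u)$; the $-(1-1/\Adi)\dot\Errorrho_2$ term in \eqref{eq-uRz-u} comes from $\frac{\dot p}{\Adi p}u-\dot\Errorrho_2$; equation \eqref{eq-main-syst-3} is Taylor-expanded about $(\ssd,\ssr)$ using $\bR=(R^3+\bV)^{1/3}$, $\ptbR=R^2\ptR\bR^{-2}$, $\frac{\ptR}{R}-\frac{\ptbR}{\bR}=\frac{\bV\ptR}{R(R^3+\bV)}$ and $\GASCONST\Tconst\ssd=2\sigma/\ssr+2\barsigma/\ssor$, producing the three linear coefficients in \eqref{eq-uRz-z}. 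It then remains to check term by term that the leftover higher-order pieces coincide with $\Pi$, $\Phi$, $\Psi$ of \eqref{eq-F-G-H}.

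Next I would expand $\Errorrho_1$ in the orthonormal radial Dirichlet basis $\{\Xi_j\}$ as in \eqref{eq-u-expansion}, so that $\theta_j(t)=\int_{B_1}\Errorrho_1\,\Xi_j\,dx$, and project \eqref{eq-uRz-u} onto $\Xi_j$: using $-\Laplace_y\Xi_j=\eigv_j\Xi_j$ and $\int_{B_1}\Xi_i\Xi_j\,dx=\delta_{ij}$ gives, for each $j$, the scalar equation $\dot\theta_j=-\bkappa\eigv_j\theta_j-(1-1/\Adi)\dot\Errorrho_2\int_{B_1}\Xi_j\,dx+\int_{B_1}\Pi\,\Xi_j\,dx$, while $\parl_y\Errorrho_1(1,t)=\sum_j\theta_j\Xi_j'(1)$ expresses \eqref{eq-uRz-R} in the coordinates. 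Since $\Errorrho_2$ must itself evolve, I would use \eqref{eq-uRz-R} (equivalently \eqref{eq-main-syst-2}) to solve for $\dot\Errorrho_2$ — legitimate because, after collecting every $\dot\Errorrho_2$-contribution, including those sitting inside $\Phi$, the coefficient of $\dot\Errorrho_2$ equals $-\frac{\ssr}{3\Adi\ssd}+O(\module{\zz})\neq0$ near $\zz=\Zero$ — and likewise solve \eqref{eq-uRz-z} for $\ddot\ErrorR$, again gathering the $\ddot\ErrorR$-terms hidden in $\Psi$. Assembling the equations for $(\Errorrho_2,\ErrorR,\ptErrorR,\theta_1,\theta_2,\dots)$ then yields the implicit quasilinear system $\dot\zz=\Linearpart\zz+\Nonlpart^1(\zz)\dot\zz+\Nonlpart^0(\zz)$, with $\Linearpart$ built from the second-order scalar block in $(\Errorrho_2,\ErrorR,\ptErrorR)$ arising from \eqref{eq-uRz-z}--\eqref{eq-uRz-R}, the diagonal entries $-\bkappa\eigv_j$ from the heat part, and the couplings through the constants $\int_{B_1}\Xi_j\,dx$ and $\Xi_j'(1)$; the terms $\Nonlpart^1,\Nonlpart^0$ collect the contributions of $\Pi,\Phi,\Psi$, and the initial data map to \eqref{eq-uRZ-initial} directly from \eqref{eq-change-var}.

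The main obstacle is functional-analytic rather than computational: one must verify, using the regularity supplied by the well-posedness theory of Appendix \ref{appendix-2}, that the coordinate map $(\gdens,R)\mapsto\zz$ takes values in the state space $Z$, that $\Nonlpart^0$ and $\Nonlpart^1$ are well-defined, locally bounded, and sufficiently smooth (at least $C^2$) near $\zz=\Zero$, and — crucially — that $\identity-\Nonlpart^1(\zz)$ is boundedly invertible in a neighbourhood of $\zz=\Zero$, so that \eqref{eq-Lw+Nw} genuinely determines $\dot\zz$. This invertibility, which turns the implicit form into a bona fide dynamical system, is exactly the hinge on which the later application of the center-manifold theory of Appendix \ref{appendix-4} rests, and the corresponding verifications are those carried out in Appendix \ref{appendix-3}.
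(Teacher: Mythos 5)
Your overall route---substitute \eqref{eq-change-var} into \eqref{eq-main-syst}, peel off the equilibrium-coefficient linear part using \eqref{eq-equi-eq} and the Taylor expansions of $1/R$ and $(R^3+\bV)^{-1/3}$ at $\ssr$, project onto the Dirichlet eigenbasis, and assemble an infinite-dimensional system---is exactly the paper's, and your intermediate identities (the origin of $\bkappa$, of the $-(1-1/\Adi)\dot\Errorrho_{2}$ term, of the three linear coefficients in \eqref{eq-uRz-z}, and the constants $\int_{B_1}\Xi_j\,dx$ and $\Xi_j^\prime(1)$ behind $\coeff_j,\omega_j$) are correct. Two points, however, deviate from what the proposition actually asserts. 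First, in the final assembly the paper does not solve for $\dot\Errorrho_{2}$ (or $\ddot\ErrorR$) by collecting the occurrences hidden in $\Phi$ and $\Psi$: it moves only the constant-coefficient terms $\frac{\ssr}{3\Adi\ssd}\dot\Errorrho_{2}$ and $\coeff_k\dot\Errorrho_{2}$ to the left, inverts that explicit constant matrix, and leaves all $\dot\zz$-dependence inside $\Nonlpart(\zz,\dot\zz)$, which is then split as $\Nonlpart^1(\zz)\dot\zz+\Nonlpart^0(\zz)$ via \eqref{eq-def-nonl-term}. Dividing instead by the state-dependent coefficient $-\frac{\ssr}{3\Adi\ssd}+O(\norm{\zz})$ would produce an explicit system whose nonlinearities are not the $\Nonlpart^1,\Nonlpart^0$ of \eqref{eq-def-N1N0}, and it smuggles a smallness restriction into what is an exact change of coordinates. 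Correspondingly, bounded invertibility of $\identity-\Nonlpart^1(\zz)$ is not needed for the stated equivalence, it is not what Appendix \ref{appendix-3} verifies (that appendix contains the $\ell^2$ reformulation and the spectrum analysis of $\Linearpart$), and the framework of Appendix \ref{appendix-4} is set up precisely for the implicit form $\Nonlpart(\zz,\dot\zz)$, the relevant hypotheses on $\Nonlpart$ being checked in Lemma \ref{l-verify-N(W,P)}.

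Second, your (correct) time-change identity $\parl_t u=(\parl_t\gdens)(R(t)y,t)+\frac{\ptR}{R}y\parl_y u$ generates, after subtracting the linear part, the quadratic term $\frac{\ptErrorR}{\ssr+\ErrorR}\,y\,\parl_y\Errorrho_{1}$, which does not appear in the displayed $\Pi$ of \eqref{eq-FGH-F} (the only $y\parl_y\Errorrho_{1}$ contribution there carries the factor $\dot\Errorrho_{2}/\bigl(3\Adi(\ssd+\Errorrho_{2})\bigr)$), nor in the fixed-domain equation \eqref{eq-app-2-1}. So the promised ``term-by-term coincidence'' with \eqref{eq-F-G-H} cannot simply be asserted: you must either carry this term along---since $\ptErrorR$ is a component of $\zz$, it is a quadratic contribution of $\Pi^0$-type, so $\Linearpart$, the splitting \eqref{eq-N1P+N0}, and the vanishing of $\Nonlpart$ and its first derivatives at $\Zero$ are all unaffected---or reconcile its absence with the paper's convention. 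As written, the proposal glosses over this mismatch at exactly the step it declares to be routine bookkeeping.
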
 
\begin{proof}
To begin with, \eqref{eq-uRz-u} and \eqref{eq-uRz-R} are derived by substituting \eqref{eq-change-var} into  \eqref{eq-main-syst}.
We point out that \eqref{eq-uRz-z} is deduced by applying \eqref{eq-equi-eq}, Taylor's theorem $\Paren{\ssr+x}^{-1}=\ssr^{-1}-\ssr^{-2}x+\cdots$ and $((\ssr+x)^3+\bV)^{-\frac 13}=(\ssr^3+\bV)^{-\frac 13}-\ssr^2\ssor^{-4}x+\cdots$. 
More precisely, we calculate as follows
\begin{align*}
\Errorrho_{2}(t)={}&\frac1{\GASCONST \Tconst}\left[4\mu_l\frac{\bV}{R(R^3+\bV)}\dot\ErrorR+\frac{2\sigma}{\ssr+\ErrorR}-\frac{2\sigma}{\ssr}+\frac{2\barsigma}{\bR}-\frac{2\barsigma}{\ssor}\right.\\
&\quad\quad\ \left.+\ldens\Paren{R\pttR-\frac{R^2}{\bR}\pttR+\Paren{\frac32-\frac{2R}{\bR}+\frac{R^4}{2\bR^4}}\ptR^2}\right]\\
={}&\frac1{\GASCONST\Tconst}\Bracket{4\mu_l\frac{\bV}{\ssr(\ssr^3+\bV)}\dot\ErrorR-\frac{2\sigma}{\ssr^2}\ErrorR-\frac{2\barsigma\ssr^2}{\ssor^{4}}\ErrorR+\ldens\tR\ddot\ErrorR}\\
&+\frac1{\GASCONST\Tconst}\left[\Paren{4\mu_l\frac{\bV}{R(R^3+\bV)}\dot\ErrorR-4\mu_l\frac{\bV}{\ssr(\ssr^3+\bV)}\dot\ErrorR}\right.\\
&\qquad\qquad\left.+\Paren{\frac{2\sigma}{\ssr^2}\ErrorR+\frac{2\sigma}{\ssr+\ErrorR}-\frac{2\sigma}{\ssr}}+\Paren{\frac{2\barsigma\ssr^2}{\ssor^{4}}\ErrorR+\frac{2\barsigma}{\bR}-\frac{2\barsigma}{\ssor}}\right] \\
&+\frac{\ldens}{\GASCONST\Tconst}\Bracket{\ErrorR\ddot\ErrorR+\Paren{\frac{\ssr^2}{\ssor}\ddot\ErrorR-\frac{R^2}{\bR}\ddot\ErrorR}+\Paren{\frac32-\frac{2R}{\bR}+\frac{R^4}{2\bR^4}}\ptR^2},
%\\
%={}&\frac1{\GASCONST \Tconst} \Bracket{-\Paren{\frac{2\sigma}{\ssr^2}+\frac{2\barsigma \ssr^2}{\ssor^{4}}}\ErrorR+\frac{4\mu_l\bV }{\ssr(\ssr^3+\bV)}\dot \ErrorR+ \ldens\Paren{\ssr-\frac{\ssr^2}{\ssor}}\ddot \ErrorR }+\Psi,
\end{align*} 
where we have denoted $\tR=\ssr-{\ssr^2}/{\ssor}$. Then, equations \eqref{eq-uRz-z} and \eqref{eq-FGH-H} follow. 	

Next, substituting \eqref{eq-u-expansion} into \eqref{eq-uRz-u} and computing the inner product in $L^2(B_1)$ with $\Xi_k(y)$ yields the following
\begin{equation}\label{eq-dot-ck}
%\sum_{j=1}^\infty\dot\theta_j(t)\Xi_j(y)=-\bkappa\sum_{j=1}^\infty\eigv_j\theta_j(t)\Xi_j(y)-\frac{\Adi-1}\Adi\dot\Errorrho_{2}(t)+\Pi,\quad 
\dot\theta_k=-\bkappa\eigv_k\theta_k-\coeff_k\dot\Errorrho_{2}+\Pi_k,\quad\coeff_k=\frac{(-1)^{k-1}2^{3/2}(\Adi-1)}{\sqrt\pi\Adi k},\quad\Pi_k=\int_{B_1}\Pi\Xi_kdy.
\end{equation} 
Using $\parl_y\Xi_j(1)=\sqrt{\pi}(-1)^jj/\sqrt{2}$, the second equation \eqref{eq-uRz-R} becomes
\begin{equation}\label{eq-dot-R}
\ptErrorR 
%=-\frac{\ssr\bkappa}{\ssd}\sum_{j=1}^\infty\theta_j(t)\parl_y\Xi_j(1)-\frac{\ssr}{3\Adi\ssd}\dot\Errorrho_{2}+\Phi
=\sum_{j=1}^\infty\theta_j\omega_j-\frac{\ssr}{3\Adi\ssd}\dot\Errorrho_{2}+\Phi,\quad\omega_j=-\frac{\ssr\bkappa}{\ssd}\sqrt{\frac{\pi}2}(-1)^jj.
\end{equation} 
Moreover, the third equation \eqref{eq-uRz-z} implies
\begin{equation*}
\frac{\GASCONST\Tconst}{\ldens\tR}\Errorrho_{2}=-\Paren{\frac{2\sigma}{\ldens\tR\ssr^2}+\frac{2\barsigma\ssr^2}{\ldens\tR\ssor^{4}}}\ErrorR+\frac{4\mu_l\bV}{\ldens\tR\ssr(\ssr^3+\bV)}\dot\ErrorR+\ddot\ErrorR+\frac{\GASCONST\Tconst\Psi}{\ldens\tR}.
\end{equation*} 
As a consequence, problem \eqref{eq-IBVP} forms an infinite-dimensional dynamical system 
\begin{align*}
&\begin{bmatrix}
\frac{\ssr}{3\Adi\ssd}&1&0&0&0&\cdots\\
0&1&0&0&0&\cdots\\
0&0&1&0&0&\cdots\\
\coeff_1&0&0&1&0&\cdots\\
\coeff_2&0&0&0&1&\cdots\\
\vdots&\vdots&\vdots&\vdots&\vdots&\ddots
\end{bmatrix}
\begin{bmatrix}
\Errorrho_{2}\\ \ErrorR\\ \ptErrorR\\ \theta_1\\ \theta_2\\ \vdots
\end{bmatrix}^\prime=\\
&
\begin{bmatrix}
0&0&0&\omega_1&\omega_2&\cdots\\
0&0&1&0&0&\cdots\\
\frac{\GASCONST \Tconst}{\ldens\tR}& \frac{2\sigma}{\ldens\tR \ssr^2}+\frac{2\barsigma \ssr^2 }{\ldens\tR \ssor^{4}}& \frac{-4\mu_l\bV }{\ldens\tR \ssr(\ssr^3+\bV)}&0&0&\cdots\\
0&0&0&-\bkappa\eigv_1&0&\cdots\\
0&0&0&0&-\bkappa\eigv_2&\cdots\\
\vdots&\vdots&\vdots&\vdots&\vdots&\ddots
\end{bmatrix}
\begin{bmatrix}
\Errorrho_{2}\\ \ErrorR\\ \ptErrorR\\ \theta_1\\ \theta_2\\ \vdots
\end{bmatrix}
+ 
\begin{bmatrix}
\Phi\\ 0\\ -\frac{\GASCONST \Tconst}{\ldens\tR} \Psi\\ \Pi_1\\ \Pi_2\\ \vdots
\end{bmatrix}.
\end{align*}
Multiplying both sides of the above equation by the inverse of the infinite-dimensional matrix and denoting $\zz=\begin{bmatrix}
\Errorrho_{2}&\ErrorR&\ptErrorR& \theta_1&\theta_2&\cdots
\end{bmatrix}^\top$,  \eqref{eq-Lw+Nw} follows, where the linear operator $\Linearpart$ equals  
\begin{equation}\label{eq-matrix-L}
\begin{bmatrix}
0&\!\!0&\!\!-\frac{3\Adi\ssd}{\ssr}&\!\!\omega_1\frac{3\Adi\ssd}{\ssr}&\!\!\omega_2\frac{3\Adi\ssd}{\ssr}&\!\!\cdots\\
0&\!\!0&\!\!1&\!\!0&\!\!0&\!\!\cdots\\
\frac{\GASCONST\Tconst}{\ldens\tR}&\!\!\frac{2\sigma\ssor^{4}+2\barsigma\ssr^4}{\ldens\tR\ssr^2\ssor^{4}}&\!\! \frac{-4\mu_l\bV }{\ldens\tR \ssr(\ssr^3+\bV)}&\!\!0&\!\!0&\!\!\cdots\\
0&\!\!0&\!\!\coeff_1\frac{3\Adi\ssd}{\ssr}&\!\!-\coeff_1\omega_1\frac{3\Adi\ssd}{\ssr}-\bkappa\eigv_1&\!\!-\coeff_1\omega_2\frac{3\Adi\ssd}{\ssr}&\!\!\cdots\\
0&\!\!0&\!\!\coeff_2\frac{3\Adi\ssd}{\ssr}&\!\!-\coeff_2\omega_1\frac{3\Adi\ssd}{\ssr}&\!\!-\coeff_2\omega_2\frac{3\Adi\ssd}{\ssr}-\bkappa\eigv_2&\!\!\cdots\\
\vdots&\!\!\vdots&\!\!\vdots&\!\!\vdots&\!\!\vdots&\!\!\ddots
\end{bmatrix},
\end{equation} 
and the nonlinear term $\Nonlpart=\Nonlpart(\zz,\dot\zz)$ is equal to
\begin{equation}\label{eq-N(w,}
\begin{bmatrix}
\frac{3\Adi\ssd}{\ssr}\Phi&0&-\frac{\GASCONST\Tconst}{\ldens\tR}\Psi&-\coeff_1\frac{3\Adi\ssd}{\ssr}\Phi+\Pi_1&-\coeff_2\frac{3\Adi\ssd}{\ssr}\Phi+\Pi_2&\cdots
\end{bmatrix}^\top,
\end{equation}
where $\Phi=\Phi(\zz,\dot\zz),\Psi=\Psi(\zz,\dot\zz),\Pi_k=\Pi_k(\zz,\dot\zz),\coeff_k$ and $\omega_k$ are defined in \eqref{eq-FGH-G}, \eqref{eq-FGH-H}, \eqref{eq-dot-ck} and \eqref{eq-dot-R}, respectively. Writing $\Nonlpart(\zz,\dot\zz)=\Nonlpart(\zz,\ww)$ (denoting $\ww=\dot\zz$), we further decompose
\begin{equation}\label{eq-def-nonl-term}
\begin{cases}
\Pi(\zz,\ww)=\Anglebracket{\bPi^1(\zz),\ww}+\Pi^0(\zz),\\ \Phi(\zz,\ww)=\Anglebracket{\bPhi^1(\zz),\ww}+\Phi^0(\zz),\\ \Psi(\zz,\ww)=\Anglebracket{\bPsi^1(\zz),\ww}+\Psi^0(\zz), 
\end{cases}
\end{equation} 
i.e.,  
\begin{align*}
\Pi(\zz,\ww)={}&\Anglebracket{\begin{bmatrix}\dfrac{a}{\Adi(\ssd+\Errorrho_{2})}\sum_{j=1}^\infty\theta_j\Paren{\dfrac{y}{3}\parl_y\Xi_j+\Xi_j}&0&0&\cdots\end{bmatrix}^\top,\ww} \\
&+\frac{\module{\nabla_y\Errorrho_{1}}^2}{(\ssr+\ErrorR)^2\Paren{\ssd+\Errorrho}^2}-\frac{\Thermal}{\Adi\Heatcapa}\Bracket{\frac1{(\ssr+\ErrorR)^2\Paren{\ssd+\Errorrho}}-\frac1{\ssr^2\ssd}}\sum_{j=1}^\infty\eigv_j\theta_j\Xi_j,\\ 
\Phi(\zz,\ww)={}&\Anglebracket{\begin{bmatrix}-\dfrac{\ErrorR}{3\Adi(\ssd+\Errorrho_{2})}+\dfrac{\ssr\Errorrho_{2}}{3\Adi\ssd(\ssd+\Errorrho_{2})}&0&0&\cdots\end{bmatrix}^\top,\ww}\\
&-\frac{\Thermal}{\Adi\Heatcapa}\Bracket{\frac1{(\ssr+\ErrorR)(\ssd+\Errorrho_{2})^2}-\frac1{\ssr\ssd^2}}\sum_{j=1}^\infty\sqrt{\frac\pi2}(-1)^j j\theta_j,\\
\Psi(\zz,\ww)={}&\Anglebracket{\begin{bmatrix}0&0&\dfrac{\ldens}{\GASCONST\Tconst}\Bracket{\ErrorR+\Paren{\dfrac{\ssr^2}{\ssor}-\dfrac{R^2}{\bR}}}&0&0&\cdots\end{bmatrix}^\top,\ww}\\
&+\frac1{\GASCONST\Tconst}\left[4\mu_l\bV\Paren{\frac{1}{R(R^3+\bV)}-\frac{1}{\ssr(\ssr^3+\bV)}}\dot\ErrorR+\Paren{\frac{2\sigma}{\ssr^2}\ErrorR+\frac{2\sigma}{\ssr+\ErrorR}-\frac{2\sigma}{\ssr}}\right.\\
&\qquad\quad\ \  \left.+\Paren{\frac{2\barsigma\ssr^2}{\ssor^{4}}\ErrorR+\frac{2\barsigma}{\bR}-\frac{2\barsigma}{\ssor}}+\ldens\Paren{\frac32-\frac{2R}{\bR}+\frac{R^4}{2\bR^4}}\ptR^2\right].
\end{align*}  
Above, $\Anglebracket{\cdot,\cdot}$ denotes inner product in the Hilbert space $\ell^2$. Therefore, the nonlinear term $\Nonlpart(\zz,\ww)$ can be further decomposed as follows
\begin{equation}\label{eq-N1P+N0}
\Nonlpart(\zz,\ww)=\Nonlpart^1(\zz)\ww+\Nonlpart^0(\zz),
\end{equation} 
where
\begin{equation}\label{eq-def-N1N0}
\begin{cases}
\Nonlpart^1= 
\begin{bmatrix}
\frac{3\Adi\ssd}{\ssr}\bPhi^1&\Zero&
-\frac{\GASCONST\Tconst}{\ldens\tR}\bPsi^1&\bPi_1^1-\coeff_1\frac{3\Adi\ssd}{\ssr}\bPhi^1&\bPi_2^1-\coeff_2\frac{3\Adi\ssd}{\ssr}\bPhi^1&\cdots
\end{bmatrix}^\top\!,\\
\Nonlpart^0=
\begin{bmatrix}
\frac{3\Adi\ssd}{\ssr}\Phi^0&0&-\frac{\GASCONST\Tconst}{\ldens\tR}\Psi^0&\Pi_1^0-\coeff_1\frac{3\Adi\ssd}{\ssr}\Phi^0&\Pi_2^0-\coeff_2\frac{3\Adi\ssd}{\ssr}\Phi^0&\cdots
\end{bmatrix}^\top\!.
\end{cases}
\end{equation} 
Here, $\Nonlpart^1(\zz)\ww$ denotes the point-wise inner product $\begin{bmatrix}
\Anglebracket{\frac{3\Adi\ssd}{\ssr}\bPhi^1(\zz),\ww}&\Anglebracket{\Zero,\ww}&\cdots
\end{bmatrix}^\top$,
$\Pi_j^0(\zz)=\int_{B_1}\Pi^0(\zz) \Xi_jdx$, and
$\bPi_j^1(\zz)=\int_{B_1}\bPi^1(\zz) \Xi_jdx$. This completes the proof of the proposition.
\end{proof}  

\noindent \textbf{Step 2:} The asymptotic stability result (part (i)) in Theorem \ref{t-main result} can now be expressed for system \eqref{eq-Lw+Nw} within the Banach space $\ell^2$ (cf. Lemma \ref{l-reformulate-L^2}). Next, to examine the decay rate, we analyze the spectrum of the linear operator $\Linearpart:\ell^2\rightarrow\ell^2$ in the following proposition, with its proof provided in Appendix \ref{appendix-3}.  

\begin{proposition}\label{p-spectrum}
Denote the spectrum of the linear operator $\Linearpart$ defined in \eqref{eq-matrix-L} by $\operatorname{sp}(\Linearpart)$. Then,  $\operatorname{sp}(\Linearpart)=\{0\}\cup\{\lambda\in\mathbb{C}:\mathbb{M}(\lambda)=0\}$. The eigenvalue $\lambda=0$ has multiplicity one, and $\mathbb{M}(\lambda)$ is a meromorphic function defined by 
\begin{align}
\mathbb{M}(\lambda)={}&\frac{\pi}{\GASCONST\Tconst\Adi}\Paren{\frac{4}{3}+\sum_{k=1}^\infty\frac{8(\Adi-1)\bkappa}{\pi^2\bkappa k^2+\lambda}}\Bracket{\ldens\tR\lambda^2+\frac{4\mu_l\bV}{\ssr(\ssr^3+\bV)}\lambda-\Paren{\frac{2\sigma}{\ssr^2}+\frac{2\barsigma\ssr^2}{\ssor^{4}}}}\nonumber\\
&+4\pi\frac{\ssd}{\ssr},\label{eq-meromorphic-function}
\end{align} 
where $\tR=\ssr-{\ssr^2}/{\ssor}$ and $\bkappa=\Thermal/(\ssr^2\ssd\Adi \Heatcapa)$ as in Proposition \ref{p-infinite-ODE}.

Moreover, there exists a constant $\varpi>0$, such that $\sup\{\operatorname{Re}(\lambda):\lambda\in\operatorname{sp}(\Linearpart)\setminus\{0\}\}\le-2\varpi<0$.
\end{proposition}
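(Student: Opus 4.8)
The plan is to realize $\Linearpart$ as a relatively bounded perturbation of its diagonal part, to solve the eigenvalue problem $\Linearpart\zz=\lambda\zz$ coordinate by coordinate so that it collapses to the single scalar equation $\mathbb{M}(\lambda)=0$, and then to read off the location of $\operatorname{sp}(\Linearpart)$ from the explicit structure of $\mathbb{M}$.

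\emph{Discreteness.} I would write $\Linearpart=\Linearpart_0+\mathcal{B}$, where $\Linearpart_0$ carries the unbounded diagonal entries $-\bkappa\eigv_k$ (together with the bounded upper-left $3\times3$ block) and $\mathcal{B}$ collects the remaining finitely-supported rows and coupling columns. On the domain $D(\Linearpart)=\{\zz\in\ell^2:\sum_k\eigv_k^2|\theta_k|^2<\infty\}$ one splits $\sum_j\omega_j\theta_j$ at an index $N$ and uses $|\omega_j|\sim j$, $|\coeff_k|\sim k^{-1}$, $\sum_j j^{-2}<\infty$ to get, for every $\varepsilon>0$, an estimate $\norm{\mathcal{B}\zz}\le\varepsilon\norm{\Linearpart_0\zz}+C_\varepsilon\norm{\zz}$; hence $\mathcal{B}$ is $\Linearpart_0$-bounded with relative bound $0$, so $\Linearpart$ is closed on $D(\Linearpart)$, and (since $\eigv_k\to\infty$) its resolvent is compact once the resolvent set is shown nonempty. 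Therefore $\operatorname{sp}(\Linearpart)$ consists of isolated eigenvalues of finite algebraic multiplicity, and it suffices to analyze $\Linearpart\zz=\lambda\zz$ (while solving the inhomogeneous equation $(\lambda-\Linearpart)\zz=\mathbf{f}$ by the same coordinate scheme shows $\{\mathbb{M}\ne0\}\setminus\{-\bkappa\eigv_k\}\subseteq\rho(\Linearpart)$).

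\emph{Reduction to $\mathbb{M}$ and the eigenvalue $0$.} For $\lambda\ne0$, $\lambda\ne-\bkappa\eigv_k$, rows $2$ and $3$ of \eqref{eq-matrix-L} give $\ptErrorR=\lambda\ErrorR$ and $\GASCONST\Tconst\Errorrho_{2}=\big[\ldens\tR\lambda^2+\tfrac{4\mu_l\bV}{\ssr(\ssr^3+\bV)}\lambda-b\big]\ErrorR$ with $b=\tfrac{2\sigma}{\ssr^2}+\tfrac{2\barsigma\ssr^2}{\ssor^{4}}$, rows $k+3$ give $\theta_k=-\coeff_k\lambda\Errorrho_{2}/(\lambda+\bkappa\eigv_k)$, and row $1$, after inserting these and using $\omega_j\coeff_j=2\ssr\bkappa(\Adi-1)/(\ssd\Adi)$ together with $\eigv_k=\pi^2k^2$, reduces to $\mathbb{M}(\lambda)=0$; since $\ErrorR=0$ forces $\zz=0$, the nonzero elements of $\operatorname{sp}(\Linearpart)$ are exactly the zeros of the meromorphic function $\mathbb{M}$, and the $-\bkappa\eigv_k$ are poles belonging to the resolvent set. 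For $\lambda=0$ a direct computation gives $\ker\Linearpart=\operatorname{span}\{(-b/(\GASCONST\Tconst),1,0,0,\dots)\}$, one-dimensional, and the obstruction to a Jordan chain at $0$ equals a positive multiple of $\mathbb{M}(0)$; by \eqref{eq-equi-eq} and $\sum_{k\ge1}k^{-2}=\pi^2/6$ one gets $\mathbb{M}(0)=\tfrac{8\pi}{3\GASCONST\Tconst}\big[\tfrac{2\sigma}{\ssr^{2}}+\tfrac{\barsigma}{\ssr\ssor}\big(3-\ssr^{3}/\ssor^{3}\big)\big]>0$ because $\ssr<\ssor$, so $0$ is an algebraically simple isolated eigenvalue and $0\notin\{\mathbb{M}=0\}$.

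\emph{The spectral gap --- the main obstacle.} It remains to show $\mathbb{M}$ has no zeros with $\operatorname{Re}\lambda\ge0$ and that its zeros stay uniformly away from $i\mathbb{R}$. Setting $\lambda=i\beta$ and writing $\tfrac43+\sum_k\tfrac{8(\Adi-1)\bkappa}{\pi^2\bkappa k^2+i\beta}=g_R(\beta)-i\beta h(\beta)$ with $g_R,h>0$, the quadratic bracket as $P(\beta)+iq\beta$ with $P(\beta)=-\ldens\tR\beta^2-b<0$ (here $\tR=\ssr-\ssr^2/\ssor>0$ since $\ssr<\ssor$) and $q=\tfrac{4\mu_l\bV}{\ssr(\ssr^3+\bV)}\ge0$, I would compute
\begin{align*}
\operatorname{Re}\mathbb{M}(i\beta)&=\tfrac{\pi}{\GASCONST\Tconst\Adi}\big(g_R(\beta)P(\beta)+\beta^2h(\beta)q\big)+\tfrac{4\pi\ssd}{\ssr},\\
\operatorname{Im}\mathbb{M}(i\beta)&=\tfrac{\pi}{\GASCONST\Tconst\Adi}\,\beta\,\big(g_R(\beta)q-h(\beta)P(\beta)\big).
\end{align*}
Since $g_R,h>0$, $P<0$, $q\ge0$, the second factor is strictly positive, so $\operatorname{sgn}\operatorname{Im}\mathbb{M}(i\beta)=\operatorname{sgn}\beta$; together with $\mathbb{M}(0)>0$ this shows $\mathbb{M}$ never vanishes on $i\mathbb{R}$. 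To exclude the open right half-plane one can apply the argument principle on $\{|\lambda|<\Lambda,\ \operatorname{Re}\lambda>0\}$: as $\Lambda\to\infty$ the arc contributes winding $+2\pi$ because $\mathbb{M}(\lambda)\sim\tfrac{4\pi\ldens\tR}{3\GASCONST\Tconst\Adi}\lambda^2$, while the sign data above make the imaginary diameter contribute $-2\pi$, so the winding number is $0$; alternatively, one linearizes the dissipation identity \eqref{eq-energy-dissipation-sph}, notes that the linearized mass is a first integral so every eigenvector with $\lambda\ne0$ lies in the flow-invariant mass-preserving subspace on which the quadratic form of Theorem \ref{t-minimizer} is coercive, and concludes boundedness of the linearized semigroup there, hence $\operatorname{Re}\lambda<0$. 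Finally, for $\operatorname{Re}\lambda>-\varepsilon$ with $\varepsilon<\pi^2\bkappa$ the sum $\sum_k\bkappa/|\pi^2\bkappa k^2+\lambda|$ is uniformly bounded and tends to $0$ as $|\lambda|\to\infty$, so $|\mathbb{M}(\lambda)|\gtrsim|\lambda|^2\to\infty$ there; combined with discreteness and the absence of zeros on $i\mathbb{R}$, only finitely many zeros lie in $\{\operatorname{Re}\lambda>-\varepsilon\}$, all with negative real part, which gives $\sup\{\operatorname{Re}(\lambda):\lambda\in\operatorname{sp}(\Linearpart)\setminus\{0\}\}\le-2\varpi<0$. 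I expect the delicate step to be precisely this right-half-plane exclusion together with the uniform gap: the sign identity on $i\mathbb{R}$ rests on the strict negativity of $P$ (i.e. on $\tR>0$ and $b>0$, encoding that the interface tensions are restoring and that $\tR$ has the right sign), and turning "no eigenvalues on $i\mathbb{R}$" into a genuine gap requires the non-accumulation estimate at $|\lambda|=\infty$.
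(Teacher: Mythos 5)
Your proposal is correct, but it reaches the conclusion by a genuinely different route than the paper. For the characterization of $\operatorname{sp}(\Linearpart)$ the paper does not go through relative boundedness and compact resolvent at all: it Laplace-transforms the linearized system \eqref{eq-IBVP} and arrives at $\tau\widetilde{\ErrorR}(\tau)\,\mathbb{M}(\tau)=\mathbb{S}(\tau)$ with $\mathbb{S}$ analytic off the poles, which exhibits $(\Linearpart-\tau I)^{-1}$ explicitly away from $\{0\}\cup\{\mathbb{M}=0\}\cup\{-\pi^2\bkappa j^2\}$; your coordinate-by-coordinate solution of $\Linearpart\zz=\lambda\zz$ together with the Kato-type perturbation argument achieves the same reduction, and your identification of the obstruction to a Jordan block at $\lambda=0$ with $\mathbb{M}(0)$ is exactly the positive quantity $\langle\Up^\top,\UU\rangle$ that the paper computes in Step 3, so that part matches in substance. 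The real divergence is in the spectral gap: the paper's Lemma \ref{l-real-part} carries out an explicit case analysis ($0<y^2\le K/2$, $y^2>K/2$, $y=0$) on the real and imaginary parts of $\mathbb{M}(x+iy)$ and produces a quantitative $\varpi$ in terms of $\Theta_1,\Theta_2,B,C,K$, which is precisely what Section \ref{s-exponential convergence} later exploits to trap the gap between $-\pi^2\bkappa$ and $-\Theta_0\pi^2\bkappa$ for small liquid volumes; your Nyquist-type argument (sign of $\operatorname{Im}\mathbb{M}(i\beta)$, winding number zero on large right half-disks since $\mathbb{M}(\lambda)\sim\frac{4\pi\ldens\tR}{3\GASCONST\Tconst\Adi}\lambda^2$ there, and non-accumulation of zeros in $\{\operatorname{Re}\lambda\ge-\varepsilon\}$ for $\varepsilon<\pi^2\bkappa$ because $|\mathbb{M}|\to\infty$) is softer, is correct, and suffices for the proposition as stated, but it yields no explicit value of $\varpi$ and so would not by itself support the paper's later quantitative decay-rate discussion. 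Two minor caveats: your fallback energy/semigroup argument only gives $\operatorname{Re}\lambda\le 0$ on its own and needs your $i\mathbb{R}$ exclusion to become strict, and both your assertion that the points $-\pi^2\bkappa k^2$ lie in the resolvent set and the paper's identity $\operatorname{sp}(\Linearpart)=\{0\}\cup\{\mathbb{M}=0\}$ silently ignore the degenerate possibility that $\ldens\tR\lambda^2+B\lambda-A$ vanishes at some $-\pi^2\bkappa k^2$ (in which case that point is an eigenvalue and the singularity of $\mathbb{M}$ there is removable); since such points lie at or below $-\pi^2\bkappa$, this does not affect the asserted bound.
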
   

By further analyzing the spectrum bound $\sup\{\operatorname{Re}(\lambda):\lambda\in\operatorname{sp}(\Linearpart)\setminus\{0\}\}$, we discover that the characterization of the operator spectrum is more accurate when the liquid volume is small.

More precisely, given the mass of the gas $M>0$, we assume that the liquid volume $V>0$ is sufficiently small. Then, using the notations in Appendix \ref{appendix-3} and applying \eqref{eq-equi-eq}, we have 
\begin{align*}
B^2-4KC^2&=\Paren{\frac{4\mu_l\bV}{\ssr(\ssr^3+\bV)}}^2-\frac{8\GASCONST\Tconst\ssd\ssor}{\ldens\ssr^2(\ssor-\ssr)}\Paren{\ldens\frac{\ssr\ssor-\ssr^2}{\ssor}}^2\\
&=\frac{16\mu_l^2}{\ssr^2(\ssr^3+\bV)^2}\bV^2-16\ldens\Paren{\frac{\sigma}{\ssr}+\frac{\barsigma}{\ssor}}\Paren{1-\frac{\ssr}{\ssor}}\\
%&\le\frac{16\mu_l^2}{\ssr^8}\bV^2-\frac{16\ldens\sigma}{\ssr}\Paren{1-\frac{\ssr}{\sqrt[3]{\ssr^3+\bV}}}\\
&\le\frac{16\mu_l^2}{\ssr^8}\bV^2-\frac{16\ldens\sigma}{\ssr}\Paren{1-\Paren{1+\bV/\ssr^3}^{-1/3}}\\
&\le\frac{16\mu_l^2}{\ssr^8}\bV^2-\frac{16\ldens\sigma}{\ssr}\Paren{\frac{3\bV}{\ssr^3}+O\Paren{\bV^2}}<0,%\ \text{for}\ V>0\ \text{small},
\end{align*}
where we have used the lower bound of the equilibrium radius $\ssr$ in \eqref{eq-Rstar-bound}. Then, from the results in Lemma \ref{l-real-part}, direct calculations show that $K=\frac{2\GASCONST\Tconst\ssd\ssor}{\ldens\ssr^2(\ssor-\ssr)}$ is large, and 
\begin{equation*}
\varpi=\frac{1}{2}\min\Brace{\Theta_2\pi^2\bkappa,\max\Brace{\frac{6\mu_l}{\ldens\ssr^2\ssor^2}\frac{1}{1+O(V)},\Theta_1\pi^2\bkappa}}=\frac{1}{2}\Theta_0\pi^2\bkappa,
\end{equation*}
where the constant $\Theta_0$ satisfying $0<\Theta_1\le\Theta_0\le\Theta_2<1$. However, by inspecting the form of $\mathbb{M}(\lambda)$, it has a pole (nearest to the origin, i.e., choosing $k=1$) at $-\pi^2\bkappa$. For $\lambda$ near this pole, since $\bV>0$ is sufficiently small, it holds that the following term in the bracket of \eqref{eq-meromorphic-function}
\begin{equation*}
\ldens\tR\lambda^2+\frac{4\mu_l\bV}{\ssr(\ssr^3+\bV)}\lambda-\Paren{\frac{2\sigma}{\ssr^2}+\frac{2\barsigma\ssr^2}{\ssor^{4}}}\ \text{is close to}\ -\frac{2\sigma}{\ssr^2}-\frac{2\barsigma\ssr^2}{\ssor^{4}}<0.
\end{equation*}
Consequently, $\mathbb{M}(\lambda)$ tends to $-\infty$ as $\lambda$ approaches the pole $-\pi^2\bkappa$ from the right side on the real axis. According to the estimates of \textbf{Case 3} in Lemma \ref{l-real-part}, $\mathbb{M}(\lambda)>0$ on $\{\lambda\in\mathbb{R}:\lambda\ge 0\}$. This observation shows that at least on the interval $(-\pi^2\bkappa,0)$, $\mathbb{M}(\lambda)$ has a real root. Thus, we find both the lower and upper bounds
\begin{equation*}
-\pi^2\bkappa<\sup\{\operatorname{Re}(\lambda):\lambda\in\operatorname{sp}(\Linearpart)\setminus\{0\}\}\le- \Theta_0\pi^2\bkappa.
\end{equation*}

From the center manifold theory in Appendix \ref{appendix-4} and main Theorem \ref{t-main result}, the index $-\varpi_\dagger$ in the convergence rate $e^{-\varpi_\dagger t}$ is determined by this spectrum bound (difference by a constant). Therefore, it is reasonable to anticipate that the convergence rate of the spherically symmetric solution will be determined by the magnitude of $-\pi^2\bkappa$. Utilizing the radius bound \eqref{eq-Rstar-bound} again, we see that
\begin{equation*}
-\pi^2\bkappa=-\frac{\Thermal\pi^2}{\ssr^2\ssd\Adi\Heatcapa}=-\frac{4\Thermal\pi^3\ssr}{3M\Adi\Heatcapa}\in\frac{\Thermal\pi^2}{\Adi\Heatcapa}\sqrt{\frac{2\pi\GASCONST}{3\sigma}}\Paren{-\sqrt{\frac{\Tconst}{M}},-\sqrt{\frac{\Tconst}{M}}\bigg/\sqrt{1+\frac{\barsigma}{\sigma}}}.
\end{equation*}
We conclude that for small liquid volumes, a reduction in gas mass or an increase in temperature can accelerate convergence. This effect has not been observed when the liquid volume is infinite.

\noindent \textbf{Step 3:}  As we have demonstrated the asymptotic stability in the space $\ell^2$ (cf. Lemma \ref{l-reformulate-L^2}), we will decompose the space into the direct sum $\ell^2=X\oplus Y$ and then derive a corresponding system, \eqref{eq-dotx-doty} below, which is equivalent to systems \eqref{eq-IBVP} and \eqref{eq-Lw+Nw} in Proposition \ref{p-infinite-ODE}, as well as the original free boundary problem \eqref{eq-main-syst}.

Following the setup in Appendix \ref{appendix-4}, we first observe that according to Proposition \ref{p-spectrum}, the eigenvalue $\lambda=0$ has multiplicity one. Therefore, the linear operator $\Linearpart$ defined in \eqref{eq-matrix-L}  
has a one-dimensional kernel $X=\ker\Linearpart=\textup{span}(\UU)$, where 
\begin{equation}\label{eq-right-eigenvector}
\UU=\begin{bmatrix} -{2\sigma}\big/\Paren{\GASCONST\Tconst\ssr^2}-{2\barsigma\ssr^2}\big/\Paren{\GASCONST\Tconst\ssor^{4}}&1&0&0&0&\cdots 
\end{bmatrix}^\top.
\end{equation}

Moreover, the vector $\Up^\top=\begin{bmatrix}{4\pi}/3& 4\pi\ssd/{\ssr}&0&{\Adi\coeff_1}/\Paren{\Adi-1}&{\Adi\coeff_2}/\Paren{\Adi-1}&\cdots\end{bmatrix}$ is the corresponding left eigenvector of $\Linearpart$ once we note that  $\sum_{j=1}^\infty\coeff_j^2=4(\Adi-1)^2\pi/\Paren{3\Adi^2}$. 

Applied the equilibrium algebraic equations \eqref{eq-equi-eq}, it follows that
\begin{equation*}
\Anglebracket{\Up^\top,\UU}=\frac{4\pi}{3\GASCONST\Tconst}\Paren{-\frac{2\sigma}{\ssr^2}-\frac{2\barsigma\ssr^2}{\ssor^{4}}+\frac{3\GASCONST\Tconst\ssd}{\ssr}}=\frac{4\pi}{3\GASCONST\Tconst}\Paren{\frac{4\sigma}{\ssr^2}+\Paren{\frac{6\barsigma}{\ssor\ssr}-\frac{2\barsigma\ssr^2}{\ssor^{4}}}},
\end{equation*} 
where we notice that the term ${6\barsigma}/{\ssor\ssr}-{2\barsigma\ssr^2}/{\ssor^{4}}$ is strict positive since $\ssr<\ssor$. Then, we can normalize $\Up^\top$ such that $\langle\Up^\top_0,\UU\rangle=1$ by setting $\Up^\top_0=\Anglebracket{\Up^\top,\UU}^{-1}\Up^\top$.

Having obtained both the left and right eigenvectors, we decompose $\ell^2=X\oplus Y$ as follows: 
$\zz=\xx+\yy$, where $\xx=\mathcal{Q}_1\zz=(\Up^\top_0\zz)\UU\in X$ and 
$\yy=\mathcal{Q}_2\zz=\zz-\mathcal{Q}_1\zz\in Y$. 

Since $\Linearpart\UU=\Up^\top\Linearpart=\Zero$, we have $0=\Up^\top_0\Linearpart\zz\UU=\mathcal{Q}_1\Linearpart\zz$, $\Linearpart\mathcal{Q}_1\zz=(\Up^\top_0\zz)\Linearpart\UU=\Zero$, and $\mathcal{Q}_2\Linearpart\zz=\Linearpart\mathcal{Q}_2\zz=\Linearpart\yy=\Linearpart\zz$. 
In particular, $\Linearpart|_X=\Zero$, and $\mathcal{Q}_2\Linearpart\big|_Y=\Linearpart\mathcal{Q}_2\big|_Y$ is the restriction of $\Linearpart$ on $Y$ satisfying $\sup\{\operatorname{Re}(\lambda):\lambda\in\operatorname{sp}(\Linearpart\big|_{Y})\}\le-2\varpi<0$. 

Then, we derive a dynamical system of $(\xx,\yy)$ from \eqref{eq-Lw+Nw},  which is of the same form as system \eqref{eq-app-4-3}. That is,
\begin{equation}\label{eq-dotx-doty}
\begin{cases}
\dot\xx=\mathcal{Q}_1\Nonlpart(\xx +\yy,\dot\xx+\dot\yy)\\
\ \ {}=\mathcal{Q}_1\Bracket{\Nonlpart^1(\xx+\yy)[\dot\xx+\dot\yy]}+\mathcal{Q}_1\Nonlpart^0(\xx+\yy),\ &t>0,\\ 
\dot\yy=\Linearpart\yy+\mathcal{Q}_2\Nonlpart(\xx+\yy,\dot\xx+\dot\yy)\\
\ \ {}=\Linearpart\yy+\mathcal{Q}_2\Bracket{\Nonlpart^1(\xx+\yy)[\dot\xx+\dot\yy]}+\mathcal{Q}_2\Nonlpart^0(\xx+\yy),\ &t>0. 
\end{cases}
\end{equation}  
The initial condition can be deduced from \eqref{eq-uRZ-initial}. 

\noindent \textbf{Step 4:} We verify the remaining requirements in (i)--(iii) in Appendix \ref{appendix-4}, especially the decay estimates in \eqref{eq-app-4-2}. Since the one-dimensional subspace  $X$ is $\Linearpart$-invariant ($\Linearpart|_X=\Zero$) and $Y$ is closed, we need to check the following results.
\begin{proposition}\label{p-invariant space}
The subspace $Y$ is $e^{\Linearpart t}$-invariant, and for any $t\ge 0$, $\norm{e^{\Linearpart t} \mathcal{Q}_2}_{\ell^2}\le ce^{-\varpi t}$, where the constant $c>0$, and the index $\varpi>0$ is given in Lemma \ref{l-real-part}.
\end{proposition}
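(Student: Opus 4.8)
The plan is to recognize $\mathcal{Q}_2$ as the spectral (Riesz) projection for $\Linearpart$ complementary to the eigenvalue $0$, and then transfer the spectral gap of Proposition \ref{p-spectrum} into an exponential decay estimate for the semigroup via the spectral mapping theorem for analytic semigroups.

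First I would fix the functional-analytic framework. Split $\Linearpart=\Linearpart_0+\Linearpart_1$, where $\Linearpart_0=\operatorname{diag}\Paren{0,0,0,-\bkappa\eigv_1,-\bkappa\eigv_2,\dots}$ on $D(\Linearpart_0)=\{\zz\in\ell^2:\sum_k\eigv_k^2|\theta_k|^2<\infty\}$ and $\Linearpart_1$ collects the remaining entries of \eqref{eq-matrix-L}. Since $-\Linearpart_0$ is non-negative and self-adjoint, $\Linearpart_0$ generates a bounded analytic $C_0$-semigroup on $\ell^2$; and as its eigenvalues are $0$ (of finite multiplicity) together with $-\bkappa(k\pi)^2\to-\infty$, it has compact resolvent. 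The perturbation $\Linearpart_1$ consists of finitely many bounded coordinate functionals plus the coupling of the $\theta$-modes through the boundary-trace functional $\theta\mapsto\sum_j\sqrt{\pi/2}(-1)^jj\,\theta_j$ tensored with $\ell^2$-vectors (recall $\coeff_k\sim 1/k$, $\omega_j\sim j$); because $\sum_j j^{2-4\beta}<\infty$ for $\beta\in(3/4,1)$, that functional --- hence $\Linearpart_1$ --- is $(1-\Linearpart_0)^\beta$-bounded for some $\beta<1$, and therefore $\Linearpart_0$-bounded with relative bound $0$ and $\Linearpart_0$-compact. Consequently $\Linearpart$ with $D(\Linearpart)=D(\Linearpart_0)$ generates an analytic $C_0$-semigroup $e^{\Linearpart t}$ on $\ell^2$ and still has compact resolvent, so $\operatorname{sp}(\Linearpart)$ is a set of isolated eigenvalues of finite algebraic multiplicity (these facts are also a by-product of the resolvent computation in Appendix \ref{appendix-3}).

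Next I would identify $\mathcal{Q}_1$ with the Riesz projection at $0$. By Proposition \ref{p-spectrum}, $0$ is isolated in $\operatorname{sp}(\Linearpart)$ with algebraic multiplicity one and $\operatorname{sp}(\Linearpart)\setminus\{0\}\subset\{\operatorname{Re}\lambda\le-2\varpi\}$, so the Riesz projection $P_0=\frac1{2\pi i}\oint_{|\lambda|=\rho}(\lambda-\Linearpart)^{-1}\,d\lambda$ (small $\rho>0$) is bounded, of rank one, and commutes with $\Linearpart$ and with $e^{\Linearpart t}$. Since the algebraic multiplicity of $0$ equals its geometric multiplicity (i.e. there is no Jordan chain), $P_0$ is the rank-one projection onto $\ker\Linearpart=\operatorname{span}(\UU)$ along $\operatorname{Ran}(\identity-P_0)$; using that $P_0^{*}$ is the Riesz projection of $\Linearpart^{*}$ at $0$ with range $\ker\Linearpart^{*}=\operatorname{span}(\Up_0)$ and the normalization $\Anglebracket{\Up_0^\top,\UU}=1$, one gets $P_0\zz=\Paren{\Up_0^\top\zz}\UU=\mathcal{Q}_1\zz$. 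Hence $\mathcal{Q}_2=\identity-\mathcal{Q}_1$ is the complementary spectral projection; it commutes with $e^{\Linearpart t}$, so $Y=\operatorname{Ran}\mathcal{Q}_2$ is $e^{\Linearpart t}$-invariant, and the part $\Linearpart_Y:=\Linearpart|_Y$ generates $e^{\Linearpart t}|_Y=e^{\Linearpart_Y t}$, an analytic semigroup on $Y$ with $\operatorname{sp}(\Linearpart_Y)=\operatorname{sp}(\Linearpart)\setminus\{0\}\subset\{\operatorname{Re}\lambda\le-2\varpi\}$. Finally, for analytic semigroups the spectral mapping theorem holds, so the growth bound of $e^{\Linearpart_Y t}$ equals $\sup\{\operatorname{Re}\lambda:\lambda\in\operatorname{sp}(\Linearpart_Y)\}\le-2\varpi$; using the slack between $-2\varpi$ and $-\varpi$ there is $c\ge1$ with $\norm{e^{\Linearpart_Y t}}_{\ell^2}\le c\,e^{-\varpi t}$ for all $t\ge0$, and since $\mathcal{Q}_2$ is bounded and $e^{\Linearpart t}\mathcal{Q}_2=e^{\Linearpart_Y t}\mathcal{Q}_2$, we obtain $\norm{e^{\Linearpart t}\mathcal{Q}_2}_{\ell^2}\le\norm{e^{\Linearpart_Y t}}_{\ell^2}\norm{\mathcal{Q}_2}_{\ell^2}\le c\,e^{-\varpi t}$.

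The \emph{hard part} will be the first paragraph: pinning down the domain of $\Linearpart$ and verifying that the boundary-trace/coupling terms are $\Linearpart_0$-compact with relative bound $0$, so that analyticity and compactness of the resolvent are inherited from $\Linearpart_0$ --- together with the input from Proposition \ref{p-spectrum} that $0$ carries no Jordan chain, which is exactly what lets us equate the abstract Riesz projection with the explicit $\mathcal{Q}_1$ built from $\UU$ and $\Up_0$. If one prefers to avoid analyticity, the same decay estimate follows on the Hilbert space $Y$ from the Gearhart--Pr\"uss theorem, using the uniform bound of $(\lambda-\Linearpart_Y)^{-1}$ on the vertical line $\operatorname{Re}\lambda=-\varpi$ that is available from the analysis of the characteristic function $\mathbb{M}(\lambda)$ in the proof of Proposition \ref{p-spectrum}.
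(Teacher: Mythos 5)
Your proposal is correct, but it takes a genuinely different route from the paper's proof. The paper establishes the $e^{\Linearpart t}$-invariance of $Y$ by a direct computation: it expands $e^{\Linearpart t}\yy_0$ in a power series and uses the identities $\mathcal{Q}_2\Linearpart=\Linearpart\mathcal{Q}_2=\Linearpart$ on $Y$ (from Step 3) to factor $\mathcal{Q}_2$ out of every term; the decay estimate is then obtained by citing the boundedness of $\mathcal{Q}_2$, the spectral bound of Proposition \ref{p-spectrum}, and the Gearhart--Pr\"uss theorem. You instead identify $\mathcal{Q}_1$ with the rank-one Riesz projection at the simple isolated eigenvalue $0$, built from the right and left null vectors $\UU$ and $\Up_0^\top$ with the normalization $\langle\Up_0^\top,\UU\rangle=1$, so that $\mathcal{Q}_2$ commutes with $e^{\Linearpart t}$ and invariance is automatic; and you get the decay from the spectral mapping theorem for analytic semigroups, after verifying through the splitting $\Linearpart=\Linearpart_0+\Linearpart_1$ (with the $\omega_j\sim j$ coupling $(1-\Linearpart_0)^\beta$-bounded for $\beta\in(3/4,1)$, hence $\Linearpart_0$-bounded with relative bound zero) that $\Linearpart$ generates an analytic semigroup with compact resolvent. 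Your route costs more functional-analytic setup but buys precision on points the paper leaves implicit: that $e^{\Linearpart t}$ is in fact generated, a rigorous substitute for the paper's formal power series of the unbounded operator $\Linearpart$, and a passage from spectrum to semigroup decay that does not hinge on the uniform resolvent bound along a vertical line which a literal use of Gearhart--Pr\"uss requires (you rightly note that this bound is anyway extractable from the analysis of $\mathbb{M}(\lambda)$); the paper's argument, in exchange, is far shorter and leans only on the projection identities already in hand plus the cited semigroup theorem. Both arguments draw their quantitative input from the same place, namely the spectral gap $\sup\{\operatorname{Re}(\lambda):\lambda\in\operatorname{sp}(\Linearpart)\setminus\{0\}\}\le-2\varpi$ of Proposition \ref{p-spectrum}.
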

\begin{proof} 
Given any $\yy_0\in Y$, one has
\begin{equation*}
e^{\Linearpart t}\yy_0=\sum_{n=0}^{\infty}\frac{(t\Linearpart)^n}{n!}\yy_0=\mathcal{Q}_2\yy_0+\sum_{n=1}^{\infty}\frac{(t\mathcal{Q}_2\Linearpart)^n}{n!}\yy_0=\mathcal{Q}_2\Paren{y_0+t\Linearpart\sum_{n=1}^{\infty}\frac{(t\mathcal{Q}_2\Linearpart)^{n-1}}{n!}\yy_0},
\end{equation*}
where we have used the fact that for $n\ge 1$,  $\Linearpart^{n-1}\Linearpart\yy_0=\Linearpart^{n-1}\mathcal{Q}_2\Linearpart\yy_0=\Linearpart^{n-2}\Linearpart\mathcal{Q}_2\Linearpart\yy_0=\Linearpart^{n-2}\mathcal{Q}_2\Linearpart\mathcal{Q}_2\Linearpart\yy_0=\Paren{\mathcal{Q}_2\Linearpart}^{n}\yy_0$. 
Therefore, we deduce $e^{\Linearpart t}\yy_0\in Y$, and we conclude that $Y$ is $e^{\Linearpart t}$-invariant. The operator estimate $\norm{e^{\Linearpart t} \mathcal{Q}_2}$ follows from the boundedness of $\mathcal{Q}_2$ and the spectrum analysis in Proposition \ref{p-spectrum}, together with the Gearhart-Pr\"uss theorem %\cite{DellOro[2023]copyright2023} 
\cite{Gearhart1978,Pruess1984} 
for $C_0$ semigroups. 
\end{proof} 

\noindent \textbf{Step 5:} In the following, we show the existence of a global center manifold for \eqref{eq-dotx-doty}, and verify the Lyapunov stability of the zero solution to the equation on the center manifold, as required in Lemma \ref{l-e-converge}. 

Compared to the local center manifold constructed in \cite[Lemma 9.6]{Lai2023}, it is worth mentioning that the manifold of equilibria $\Sigma$ given in \eqref{eq-Mstar} is actually a global center manifold since we do not utilize the smallness assumption in this part.
\begin{proposition}\label{p-center}
Given any $(\ssd,\ssr)=(\ssd[M,V],\ssr[M,V])\in \Sigma$, for $\alpha\in\mathbb{R}$, %and $|\alpha|$ small, 
we define $\hatd(\alpha)$ by
\begin{equation}\label{eq-center-density}
\hatd(\alpha)=\frac{1}{\GASCONST\Tconst}\Paren{{2\sigma}\big/{\hatr(\alpha)}+{2\barsigma}\Big/\sqrt[3]{\hatr(\alpha)^3+\bV}},
\end{equation} 
where $\hatr$ is an arbitrary positive $C^1$ function such that %$\hatr(\alpha)>0$ for $|\alpha|$ small, 
$\hatr(0) =\ssr$ and $ \hatr^{\prime}(0)\neq 1$. 

Then, it holds $\hatd(0)=\ssd$. Denote $\xx\in X$ by $\xx=\alpha\UU,\alpha\in \mathbb{R}$, and define the curve $\yy=h(\xx)=h(\alpha\UU)$ by
\begin{equation*}
h(\xx)=\begin{bmatrix}
\hatd(\alpha)+\Paren{\dfrac{2\sigma}{\GASCONST\Tconst\ssr^2}+\dfrac{2\barsigma\ssr^2}{\GASCONST\Tconst\ssor^{4}}}\alpha-\ssd&\hatr(\alpha)-\alpha-\ssr&0&0&\cdots
\end{bmatrix}^\top.
\end{equation*} 
It follows that the curve $h$ is a global center manifold for system \eqref{eq-dotx-doty}.

Moreover, for $\xx(t)=\alpha(t)\UU$ with $|\alpha(t)|$ small enough, the equation on the center manifold given by
\begin{equation}\label{eq-equation-on-center}
\dot\xx=\mathcal{Q}_1\Bracket{\Nonlpart^1(\xx+h(\xx))[\dot\xx+h^\prime(\xx)\dot\xx]}+\mathcal{Q}_1\Nonlpart^0(\xx+h(\xx)),
\end{equation} 
is trivial. That is, \eqref{eq-equation-on-center} is equivalent to $\dot\alpha\equiv0$. Therefore, the zero solution to \eqref{eq-equation-on-center} is Lyapunov stable.
\end{proposition}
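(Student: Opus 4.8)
The plan is organized around a single observation: the curve defined by $h$ is, read in the $\zz$-coordinates of \eqref{eq-change-var}--\eqref{eq-u-expansion}, exactly the equilibrium manifold $\Sigma$ of \eqref{eq-Mstar} for the fixed liquid volume $V$; everything else is bookkeeping. First I would record the elementary facts. Because $\hatr(0)=\ssr$, formula \eqref{eq-center-density} combined with the equilibrium identity \eqref{eq-equi-eq} (equivalently \eqref{eq-rhostar}) gives $\hatd(0)=\frac{2}{\GASCONST\Tconst}\bigl(\sigma/\ssr+\barsigma/\ssor\bigr)=\ssd$, so the two nonzero entries of $h(\Zero)$ vanish and $h(\Zero)=\Zero$. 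Next, for $\xx=\alpha\UU$ with $\UU$ as in \eqref{eq-right-eigenvector}, a direct computation cancels the affine shift in $h$ and gives $\xx+h(\xx)=\bigl(\hatd(\alpha)-\ssd,\ \hatr(\alpha)-\ssr,\ 0,\ 0,\dots\bigr)^{\top}$.

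The core step is to interpret this point. Transporting it back through \eqref{eq-change-var} and the eigenfunction expansion \eqref{eq-u-expansion}, all coefficients $\theta_j$ vanish, hence $\Errorrho_{1}\equiv0$, hence $\Errorrho\equiv\hatd(\alpha)-\ssd$ is independent of $y$, and the underlying state is the spatially homogeneous pair $\gdens\equiv\hatd(\alpha)$, $R\equiv\hatr(\alpha)$. I then invoke the characterization of equilibria established in the proof of Theorem \ref{t-equilibria}: a spherically symmetric state is stationary for \eqref{eq-main-syst} precisely when its density is constant and $\gdens=\frac{2}{\GASCONST\Tconst}\bigl(\sigma/R+\barsigma/\bR\bigr)$, with \emph{no} constraint on the gas mass --- and that last identity is exactly \eqref{eq-center-density}. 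Consequently, for every $\alpha$ the point $\xx+h(\xx)$ is a genuine steady state of \eqref{eq-main-syst}, namely the equilibrium $(\ssd[M_\alpha,V],\ssr[M_\alpha,V])$ with $M_\alpha=\tfrac{4\pi}{3}\hatd(\alpha)\hatr(\alpha)^3$; as $\alpha$ ranges over $\mathbb{R}$ these points sweep out $\Sigma$ in $\zz$-coordinates (at least a neighbourhood of $\Zero$ by continuity of $\hatr$, and all of $\Sigma$ when $\hatr$ is onto $(0,\infty)$).

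Two consequences then settle the center-manifold assertion. Invariance: each point $\xx+h(\xx)$ is a time-independent solution of \eqref{eq-main-syst}, hence of \eqref{eq-Lw+Nw}, so the trajectory through it is constant in time; transferring through the decomposition $\ell^2=X\oplus Y$ of Step 3 shows the curve is invariant for \eqref{eq-dotx-doty}. Tangency and regularity: $\hatr\in C^1$ and the smooth dependence of $\hatd$ on $\hatr$ make the curve $C^1$, and differentiating the parametrization together with $\hatd'(0)=-\bigl(\tfrac{2\sigma}{\GASCONST\Tconst\ssr^2}+\tfrac{2\barsigma\ssr^2}{\GASCONST\Tconst\ssor^4}\bigr)\hatr'(0)$ (from \eqref{eq-center-density}) shows the tangent vector at $\Zero$ is a scalar multiple of $\UU$, hence lies in $X$, which by Proposition \ref{p-spectrum} ($\lambda=0$ simple, $\operatorname{sp}(\Linearpart)\setminus\{0\}\subset\{\operatorname{Re}(\lambda)<0\}$) is exactly the center subspace; the hypothesis $\hatr'(0)\neq1$ is the non-degeneracy needed for this parametrization to be admissible. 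Since no smallness of $\alpha$ is used at any point, the curve is a \emph{global} center manifold for \eqref{eq-dotx-doty} in the sense of Appendix \ref{appendix-4}, which generalizes the local construction of \cite[Lemma 9.6]{Lai2023}.

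Finally, the reduced flow. Writing $\zz^{*}=\xx+h(\xx)$, which is an equilibrium of \eqref{eq-Lw+Nw} and satisfies $\Nonlpart(\zz^{*},\Zero)=\Nonlpart^0(\zz^{*})$, we get $\Linearpart\zz^{*}+\Nonlpart^0(\zz^{*})=\Zero$; applying $\mathcal{Q}_1$ and using $\mathcal{Q}_1\Linearpart=\Zero$ from Step 3 yields $\mathcal{Q}_1\Nonlpart^0(\zz^{*})=\Zero$. Hence, with $\xx=\alpha\UU$ and $\dot\xx=\dot\alpha\UU$, the center-manifold equation \eqref{eq-equation-on-center} collapses to $\dot\alpha\UU=\dot\alpha\,\mathcal{Q}_1\!\bigl[\Nonlpart^1(\zz^{*})(\UU+h'(\xx)\UU)\bigr]$; writing the right-hand vector as $g(\alpha)\UU$ and noting that $\Nonlpart^1(\Zero)=\Zero$ (each of $\bPhi^1,\bPsi^1,\bPi^1$ vanishes at $\zz=\Zero$ by its explicit formula, whence so does $\Nonlpart^1$ via \eqref{eq-def-N1N0}), we have $|g(\alpha)|<1$ for $|\alpha|$ small, forcing $\dot\alpha\equiv0$. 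So \eqref{eq-equation-on-center} is trivial and its zero solution --- indeed every solution --- is Lyapunov stable. The step I expect to be the main obstacle is squaring the explicit $h$ with the precise graph-over-$X$ formalism of Appendix \ref{appendix-4}: $h$ as written is not literally a map into $Y$, so one must either reparametrize by the genuine center coordinate $\mathcal{Q}_1\zz$ or work with the invariant curve $\Sigma$ directly, and then verify the tangency/admissibility and regularity conditions carefully; by contrast, once the curve is recognized as $\Sigma$, invariance and the triviality of the reduced equation are essentially immediate.
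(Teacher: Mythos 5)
Your proposal is correct and follows essentially the same route as the paper: you identify $\xx+h(\xx)$ with the constant-density equilibria of mass $\tfrac{4\pi}{3}\hatd(\alpha)\hatr(\alpha)^3$ and liquid volume $V$, deduce invariance from the constancy of these solutions, verify tangency via $\hatd'(0)$, and reduce \eqref{eq-equation-on-center} to $\dot\alpha\,(1-g(\alpha))=0$ with $|g(\alpha)|<1$ for $|\alpha|$ small. Your only deviations are cosmetic: you eliminate the $\mathcal{Q}_1\Nonlpart^0$ term abstractly from $\Linearpart\zz^*+\Nonlpart^0(\zz^*)=\Zero$ together with $\mathcal{Q}_1\Linearpart=\Zero$, and bound $g$ by continuity using $\Nonlpart^1(\Zero)=\Zero$, whereas the paper computes the nonlinear terms on the manifold explicitly (finding $\langle\Up^\top_0,\Nonlpart^0\rangle=0$ and an explicit formula for $g$).
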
 
\begin{proof}
Without loss of generality, for any $(\xx(0),h(\xx(0)))$, %with $\module{\xx(0)}=\module{\alpha(0)\UU}$ small, 
we consider the solution in the form of $(\xx(t)=\alpha(t)\UU,\yy(t))$ to system \eqref{eq-dotx-doty}. Note that
the initial data yield the initial condition $\zz(0)=\xx(0)+h(\xx(0))=
\begin{bmatrix}
\hatd(\alpha)-\ssd&\hatr(\alpha)-\ssr&0&0&\cdots
\end{bmatrix}^\top$.

Recalling in Proposition \ref{p-infinite-ODE}, $\zz= \begin{bmatrix}\Errorrho_{2}&\ErrorR&\ptErrorR&\theta_1&\theta_2&\cdots\end{bmatrix}^\top$, combining with the changing variables \eqref{eq-change-var} and the decomposition \eqref{eq-u-expansion}, we deduce that $\theta_i(0)=0$ and therefore $\Errorrho_{1}(\cdot,0)\equiv0$. Also, it holds $R(0)=\hatr(\alpha(0)),\Errorrho_{2}(0)=\hatd(\alpha(0))-\ssd$, and $\gdens(R(0)y,0)=\ssd+\Errorrho_{2}(0)=\hatd(\alpha(0))$ for any $y\le1$. That is, $\gdens(\cdot,0)\equiv\hatd(\alpha(0))$.

From the trajectory defined in \eqref{eq-center-density}, these initial data are exactly the equilibrium of the gas-liquid system with the gas mass  
\begin{equation*}
M=\frac{4\pi}{3}\hatd(\alpha(0))\hatr(\alpha(0))^3
\end{equation*}
and the liquid volume $V$.
Therefore, the global-in-time solution is
\begin{equation*}
(\xx(t),\yy(t))\equiv(\xx(0),\yy(0))=(\xx(0),h(\xx(0))).
\end{equation*} 
In other words, it follows that 
\begin{equation*}
\gdens(x,t)\equiv\hatd(\alpha(0)), x\in B_{\hatr(\alpha(0))}, R(t)\equiv \hatr(\alpha(0)),\ \text{and}\ \alpha(t)\equiv\alpha(0).
\end{equation*}

Next, we verify that the curve $\yy=h(\xx)$ is tangent to the subspace $X$ at the origin by differentiating \eqref{eq-center-density}
\begin{equation}\label{eq-partial-rho-s-s}
\hatd^\prime(\alpha)=-\frac{1}{\GASCONST \Tconst}\Bracket{ \frac{2\sigma}{ \hatr(\alpha)^2}+\frac{2\barsigma \hatr(\alpha)^2 }{\Paren{ \hatr(\alpha)^3+\bV}^{4/3}} }\hatr^\prime(\alpha).
\end{equation} 
It follows that 
\begin{equation*}
\hatd^\prime(0)%=-\frac{1}{\GASCONST\Tconst}\Paren{\frac{2\sigma}{ \ssr^2}+\frac{2\barsigma \ssr^2 }{\Paren{ \ssr^3+\bV}^{\frac 43}} } \hatr^\prime(0)
=-\Paren{\frac{2\sigma}{\GASCONST\Tconst\ssr^2}+\frac{2\barsigma\ssr^2}{\GASCONST\Tconst\ssor^{4}}}\hatr^\prime(0),
\end{equation*} 
by setting $\alpha=0$ and using the assumption $\hatr(0)=\ssr$. Recalling the vector $\UU$ defined in \eqref{eq-right-eigenvector}, we conclude that
\begin{equation*}
\frac{dh(\alpha\UU)}{d\alpha} \bigg|_{\alpha=0}=\begin{bmatrix}
-\Paren{\dfrac{2\sigma}{\GASCONST\Tconst\ssr^2}+\dfrac{2\barsigma\ssr^2}{\GASCONST\Tconst\ssor^{4}}}\Paren{\hatr^\prime(0)-1}&\hatr^\prime(0)-1&0&\cdots
\end{bmatrix}^\top\in X. 
\end{equation*} 

Finally, we check that equation \eqref{eq-equation-on-center} on the center manifold is trivial, provided $\xx(t)=\alpha(t)\UU$ with $|\alpha(t)|$ sufficiently small. Since $\yy(t)=h(\xx(t))$ on the center manifold, we have
\begin{equation}\label{eq-x+h(x)}
\begin{cases}
\zz(t)=\xx(t)+h(\xx(t))= 
\begin{bmatrix}
\hatd(\alpha(t))-\ssd&\hatr(\alpha(t))-\ssr&0&0&\cdots
\end{bmatrix}^\top,\\ 
\dot\zz(t)=\dot\xx(t)+h^\prime(\xx(t))\dot\xx(t)=\dot\alpha(t)
\begin{bmatrix}
\hatd^\prime(\alpha(t))&\hatr^\prime(\alpha(t))&0&0&\cdots
\end{bmatrix}^\top. 
\end{cases}
\end{equation} 
Thus, for the nonlinear terms $\Pi^0,\bPi^1,\Phi^0,\bPhi^1,\Psi^0$, and $\bPsi^1$ defined in \eqref{eq-def-nonl-term}, from $\ptErrorR\equiv0,\theta_i\equiv0$, and $\Errorrho_{1}\equiv0$, we deduce that  
\begin{equation*}
\begin{cases}
\Pi^0(\xx(t)+h(\xx(t)))=0,\quad \bPi^1(\xx(t)+h(\xx(t)))=\Zero,\\
\Phi^0(\xx(t)+h(\xx(t)))=0,\quad \bPhi^1(\xx(t)+h(\xx(t)))=\begin{bmatrix}g(\alpha(t))&0& 0&\!\!\cdots\end{bmatrix}^\top\!\!,\\
\Psi^0(\xx(t)+h(\xx(t)))=\frac1{\GASCONST\Tconst}\bigg[\frac{2\sigma}{\ssr^2}\Paren{\hatr(\alpha(t))-\ssr}+\frac{2\sigma}{\hatr(\alpha(t))}-\frac{2\sigma}{\ssr}\\ \qquad\qquad\qquad\qquad\qquad\quad \ \  +\frac{2\barsigma\ssr^2}{\ssor^{4}}\Paren{\hatr(\alpha(t))-\ssr}+\frac{2\barsigma}{\Paren{\hatr(\alpha(t))^3+\bV}^{\frac13}}-\frac{2\barsigma}{\ssor}\bigg]\ \ ,\\
\bPsi^1(\xx(t)+h(\xx(t)))=\begin{bmatrix}0&0&\frac{\ldens}{\GASCONST\Tconst}\Paren{\hatr(\alpha(t))-\ssr+\frac{\ssr^2}{\ssor}-\frac{\hatr(\alpha(t))^2}{\Paren{\hatr(\alpha(t))^3+\bV}^{\frac13}}}&0&\!\!\cdots\end{bmatrix}^\top\!\!,
\end{cases}
\end{equation*}
where
\begin{equation}\label{eq-def-g(alpha)}
g(\alpha(t))%=\frac{\ssr\hatd(\alpha(t))-\ssd\hatr(\alpha(t))}{3\Adi\ssd\hatd(\alpha(t))}
=\frac{\ssr}{3\Adi\ssd}\Paren{1-\frac{\ssd\hatr(\alpha(t))}{\hatd(\alpha(t))\ssr}}.
\end{equation} 
By \eqref{eq-def-N1N0}, the nonlinear term $\Nonlpart^1(\xx(t)+h(\xx(t)))\Bracket{\dot\xx(t)+h^\prime(\xx(t))\dot\xx(t)}$ equals 
\begin{equation*}
3\Adi\ssd\ssr^{-1}\hatd^\prime(\alpha(t))g(\alpha(t))\dot\alpha(t) \begin{bmatrix}
1&0&0&-\coeff_1&-\coeff_2&\cdots \end{bmatrix}^\top,
\end{equation*} 
and $\Nonlpart^0(\xx(t)+h(\xx(t)))=\begin{bmatrix}0&0&-\frac{\GASCONST\Tconst}{\ldens\tR}\Psi^0(\xx(t)+h(\xx(t)))&0&0&\cdots\end{bmatrix}^\top$. Then, noting that $\Anglebracket{\Up^\top_0,\Nonlpart^0(\xx(t)+h(\xx(t)))}=0$, equation \eqref{eq-equation-on-center} is further equivalent to
\begin{align*}
\dot\xx&=\Anglebracket{\Up^\top_0,\Nonlpart^1(\xx(t)+h(\xx(t)))[\dot\xx+h^\prime(\xx(t))\dot\xx]}\UU\\
&=4\pi\ssd\ssr^{-1}\Anglebracket{\Up^\top,\UU}^{-1}\hatd^\prime(\alpha(t))g(\alpha(t))\dot\alpha\UU,
\end{align*}
where we have used the identity $\sum_{j=1}^\infty\coeff_j^2=\frac{4(\Adi-1)^2\pi}{3\Adi^2}$ and the left eigenvector $\Up^\top$. Combining \eqref{eq-def-g(alpha)} and \eqref{eq-partial-rho-s-s}, we conclude that
\begin{align*}
&\dot\alpha\Bracket{1-4\pi\ssd\ssr^{-1}\Anglebracket{\Up^\top,\UU}^{-1}\hatd^\prime(\alpha(t))g(\alpha(t))}\\
={}&\dot\alpha\Bracket{1+\frac{4\pi\hatr^\prime(\alpha(t))}{3\Anglebracket{\Up^\top,\UU}\GASCONST\Tconst\Adi}\Paren{\frac{2\sigma}{\hatr(\alpha(t))^2}+\frac{2\barsigma\hatr(\alpha(t))^2}{(\hatr(\alpha(t))^3+\bV)^{\frac43}}}\Paren{1-\frac{\ssd\hatr(\alpha(t))}{\hatd(\alpha(t))\ssr}}}\\
\triangleq{}&\dot\alpha\Brace{1+\Bracket{1-\ssd\hatr(\alpha(t))/\Paren{\hatd(\alpha(t))\ssr}}K(\alpha(t))}
\end{align*} 
vanishes, since $\dot\xx(t)=\dot\alpha(t)\UU$ and $\UU\neq \Zero$. Note that for $\module{\alpha(t)}$ small enough, $K(\alpha(t))$ is bounded and the factor $1-\ssd\hatr(\alpha(t))/\Paren{\hatd(\alpha(t))\ssr}$
is sufficiently small. This yields that $\dot\alpha(t)=0$ for all $\module{\alpha(t)}$ sufficiently small. In other words, the dynamic on the center manifold is trivial, and the proof is completed. 
\end{proof}  

\noindent \textbf{Step 6:} The assumptions for the nonlinear terms in system \eqref{eq-Lw+Nw} are verified in the following lemma. 
\begin{lemma}\label{l-verify-N(W,P)}
For the nonlinear term $\Nonlpart(\zz,\ww)$ defined in \eqref{eq-N(w,}, where $\zz$ is computed by \eqref{eq-change-var} and \eqref{eq-u-expansion} from the solution $(\brho,R)%\in C^{2+2\alpha,1+\alpha}_{y,t}\times C^{3+\alpha}_t
$ of problem \eqref{eq-app-2}. 
Then, we have $\Nonlpart(\zz,\ww)\in\ell^2$, $\Nonlpart(\Zero,\cdot)\equiv\Zero$, $\parl_\ww\Nonlpart(\Zero,\Zero)=\Zero$, and $\parl_\zz\Nonlpart(\Zero,\Zero)=\Zero$.
\end{lemma}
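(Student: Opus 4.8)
The plan is to establish the four assertions in turn, reading everything off the explicit formulas \eqref{eq-FGH-F}--\eqref{eq-FGH-H} for $\Pi,\Phi,\Psi$, their decompositions \eqref{eq-def-nonl-term}, and the block structure $\Nonlpart(\zz,\ww)=\Nonlpart^1(\zz)\ww+\Nonlpart^0(\zz)$ recorded in \eqref{eq-N(w,} and \eqref{eq-def-N1N0}, while using the spatial regularity of solutions of \eqref{eq-app-2}, namely $\brho(\cdot,t)\in C^{2+2\alpha}_y(B_1)$. For $\Nonlpart(\zz,\ww)\in\ell^2$ I would split $\Nonlpart$ into its three ``head'' entries, built from $\Phi^0,\bPhi^1,\Psi^0,\bPsi^1$, which are finite because $\Errorrho_2,\ErrorR,\ptErrorR,\dot\Errorrho_2$ are bounded and, near the equilibrium, the denominators $\ssr+\ErrorR$, $\ssd+\Errorrho_2$ and $R(R^3+\bV)$ stay bounded away from $0$; and its ``tail'' entries, of the form $\Pi^0_k-\coeff_k\tfrac{3\Adi\ssd}{\ssr}\Phi^0$ and, for $\Nonlpart^1$, $\bPi^1_k-\coeff_k\tfrac{3\Adi\ssd}{\ssr}\bPhi^1$. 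Since $\coeff_k=O(1/k)$ one has $\{\coeff_k\}\in\ell^2$, so it remains only to treat $\{\Pi^0_k\}$ and $\{\bPi^1_k\}$; these are the Fourier coefficients of $\Pi^0$, respectively $\bPi^1$, in the orthonormal basis $\{\Xi_k\}$ of $L^2(B_1)$, so by Bessel's inequality the claim reduces to $\Pi^0,\bPi^1\in L^2(B_1)$, which holds because $\Laplace_y\Errorrho_1\in L^2(B_1)$ and $\nabla_y\Errorrho_1\in L^\infty(B_1)$.

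For $\Nonlpart(\Zero,\cdot)\equiv\Zero$ I would set $\zz=\Zero$, forcing $\Errorrho_2=\ErrorR=\ptErrorR=0$ and all $\theta_j=0$, hence $\Errorrho_1=\sum_j\theta_j\Xi_j\equiv0$, $\Errorrho\equiv0$, $R=\ssr$, $\bR=\ssor$ and $\ptR=0$, and then check that every summand of $\Pi,\Phi,\Psi$ carries a factor that vanishes under these substitutions --- either a bare state factor ($\Errorrho_1$, $\parl_y\Errorrho_1$, $\ErrorR$, $\Errorrho_2$, or $\ptR^2$), a difference of reciprocal monomials such as $(\ssr+\ErrorR)^{-2}(\ssd+\Errorrho)^{-1}-\ssr^{-2}\ssd^{-1}$ or $\Paren{R(R^3+\bV)}^{-1}-\Paren{\ssr(\ssr^3+\bV)}^{-1}$, or a first-order Taylor remainder such as $\tfrac{2\sigma}{\ssr^2}\ErrorR+\tfrac{2\sigma}{\ssr+\ErrorR}-\tfrac{2\sigma}{\ssr}$, $\tfrac{2\barsigma\ssr^2}{\ssor^{4}}\ErrorR+\tfrac{2\barsigma}{\bR}-\tfrac{2\barsigma}{\ssor}$, or $\tfrac{\ssr^2}{\ssor}-\tfrac{R^2}{\bR}$. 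This makes $\Pi\equiv\Phi\equiv\Psi\equiv0$ and $\Pi_k=\int_{B_1}\Pi\,\Xi_k\,dx=0$ for any $\ww$, so every entry of $\Nonlpart(\Zero,\ww)$ vanishes. For the two derivative identities I would use that $\Nonlpart$ is affine in $\ww$, so $\parl_\ww\Nonlpart(\Zero,\Zero)=\Nonlpart^1(\Zero)$, which is the zero operator once $\bPi^1(\Zero)=\bPhi^1(\Zero)=\bPsi^1(\Zero)=\Zero$; this is immediate from \eqref{eq-def-nonl-term}, where the only nontrivial entry of $\bPi^1$ is a multiple of $\tfrac13y\parl_y\Errorrho_1+\Errorrho_1$, that of $\bPhi^1$ is $-\tfrac{\ErrorR}{3\Adi(\ssd+\Errorrho_2)}+\tfrac{\ssr\Errorrho_2}{3\Adi\ssd(\ssd+\Errorrho_2)}$, and that of $\bPsi^1$ is $\tfrac{\ldens}{\GASCONST\Tconst}\Paren{\ErrorR+\tfrac{\ssr^2}{\ssor}-\tfrac{R^2}{\bR}}$, all vanishing at $\zz=\Zero$. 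Likewise $\parl_\zz\Nonlpart(\Zero,\Zero)=\parl_\zz\Nonlpart^0(\Zero)$ since the $\zz$-derivative of $\Nonlpart^1(\zz)\ww$ is killed at $\ww=\Zero$, and it remains to see that $\Pi^0,\Phi^0,\Psi^0$ vanish to second order at $\zz=\Zero$; I would get this by writing each as a finite sum of products of two quantities that are both $O(\norm{\zz})$ --- for instance, in $\Pi^0$ the term $\module{\nabla_y\Errorrho_1}^2$ times a bounded factor is quadratic in $\Errorrho_1$, while $\Bracket{(\ssr+\ErrorR)^{-2}(\ssd+\Errorrho)^{-1}-\ssr^{-2}\ssd^{-1}}\sum_j\eigv_j\theta_j\Xi_j=O(\module{\ErrorR}+\norm{\Errorrho}_{L^\infty})\cdot O(\norm{\Errorrho_1})$; in $\Psi^0$ the surface-tension remainders are $O(\ErrorR^2)$, the term $4\mu_l\bV\Bracket{\Paren{R(R^3+\bV)}^{-1}-\Paren{\ssr(\ssr^3+\bV)}^{-1}}\ptErrorR$ is $O(\module{\ErrorR}\,\module{\ptErrorR})$, and $\ldens\Paren{\tfrac32-\tfrac{2R}{\bR}+\tfrac{R^4}{2\bR^4}}\ptR^2=O(\ptErrorR^2)$; $\Phi^0$ is analogous. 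Hence $\Nonlpart^0(\zz)=O(\norm{\zz}^2)$ in $\ell^2$, its tail again controlled via $\norm{\Pi^0}_{L^2(B_1)}$, and so $\parl_\zz\Nonlpart^0(\Zero)=\Zero$.

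I expect the main obstacle to be the control of the series $\sum_j\eigv_j\theta_j\Xi_j$ (in $\Pi^0$ and $\bPi^1$) and $\sum_j(-1)^jj\theta_j$ (in $\Phi^0$ and $\bPhi^1$), that is, of the sequences $\{j^2\theta_j\}$ and $\{j\theta_j\}$, which are \emph{not} controlled by $\norm{(\theta_j)}_{\ell^2}$ alone. The way around it is exactly the hypothesis that $\zz$ comes from a solution of \eqref{eq-app-2}: the regularity $\brho(\cdot,t)\in C^{2+2\alpha}_y(B_1)$ together with $\Errorrho_1|_{\parl B_1}=0$ gives $\{\eigv_j\theta_j\}=\{\Anglebracket{-\Laplace_y\Errorrho_1,\Xi_j}\}\in\ell^2$ --- by Bessel's inequality applied to $\Laplace_y\Errorrho_1\in L^2(B_1)$ --- hence absolute convergence of the above series with bounds by $\norm{\Errorrho_1}_{C^{2+2\alpha}_y(B_1)}$, which is also consistent with the choice of the ambient space in Appendix \ref{appendix-4}. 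Granting this regularity input, the remaining estimates are routine Taylor expansions and bookkeeping.
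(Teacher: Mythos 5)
Your proposal is correct and follows essentially the same route as the paper: decompose $\Nonlpart(\zz,\ww)=\Nonlpart^1(\zz)\ww+\Nonlpart^0(\zz)$, verify $\Nonlpart(\Zero,\cdot)\equiv\Zero$ and $\Nonlpart^1(\Zero)=\Zero$ directly from \eqref{eq-def-nonl-term}, show $\Nonlpart^0(\zz)=O(\norm{\zz}^2)$ by Taylor expansion at the equilibrium, and read off the two derivative identities. The only (minor) difference is that where the paper disposes of $\Nonlpart(\zz,\ww)\in\ell^2$ by citing \cite[Proposition 9.7]{Lai2023}, you spell out the underlying argument --- Bessel's inequality for the tails $\{\Pi^0_k\},\{\bPi^1_k\}$ together with $\{j^2\theta_j\}\in\ell^2$ coming from the $C^{2+2\alpha}$ regularity of the solution of \eqref{eq-app-2} --- which is the same mechanism and is consistent with Lemma \ref{l-reformulate-L^2}.
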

\begin{proof} 
The fact that $\Nonlpart(\zz, \ww) \in \ell^2$ follows from the same arguments in \cite[Proposition 9.7]{Lai2023}. Also, it is clear that $\Nonlpart(\Zero,\ww)\equiv\Zero$ for all $\ww$ by using the definition in \eqref{eq-def-nonl-term}.
To compute the partial derivatives, we utilize the decomposition $\Nonlpart(\zz,\ww)=\Nonlpart^1(\zz)\ww+\Nonlpart^0(\zz)$ from \eqref{eq-N1P+N0}. Applying \eqref{eq-def-nonl-term} again, we have $|\bPi^1(\zz)|+|\bPhi^1(\zz)|+|\bPsi^1(\zz)|\le O(\norm\zz)$ and $|F^0(\zz)|+|\Phi^0(\zz)|+|\Psi^0(\zz)|\le O(\norm\zz^2)$. To see this, for terms $\bPsi^1(\zz)$ and $\Psi^0(\zz)$, using Taylor's theorem at the equilibrium radius $\ssr$ (e.g., ${\bR^{-1}}={\ssor^{-1}}-{\ssr^2}{\ssor^{-4}}\ErrorR+O(\ErrorR^2)$), one has
\begin{equation*}
\frac{\ldens}{\GASCONST\Tconst}\module{\ErrorR+\frac{\ssr^2}{\ssor}-\frac{R^2}{\bR}}=O(\norm{\zz}),\quad\text{as}\ \norm{\zz}\rightarrow 0.
\end{equation*}
Therefore, $|\bPsi^1(\zz)|\le O(\norm\zz)$, as $\norm{\zz}\rightarrow 0$. Also, from  the ratio ${R}/{\bR}<1$, it holds
\begin{align*} 
\module{\Psi^0(\zz)}={}&\left|\frac1{\GASCONST\Tconst}\left[4\mu_l\bV\Paren{\frac{1}{R(R^3+\bV)}-\frac{1}{\ssr(\ssr^3+\bV)}}\dot\ErrorR+{\frac{2\sigma\ErrorR^2}{\ssr^2(\ssr+\ErrorR)}}\right.\right.\\
&\qquad\ \ \left.\left.+2\barsigma\Paren{\frac{\ssr^2}{\ssor^{4}}\ErrorR+\frac{1}{\bR}-\frac{1}{\ssor}}\right]+\frac{\ldens}{\GASCONST\Tconst}\Paren{\frac32-\frac{2R}{\bR}+\frac{R^4}{2\bR^4}}\dot\ErrorR^2\right|\\
\le{}&O\Paren{\lVert\ErrorR\dot\ErrorR\rVert+\lVert\ErrorR\rVert^2+\lVert\dot\ErrorR\rVert^2}\le O(\norm\zz^2),\quad\text{as}\ \norm{\zz}\rightarrow 0.
\end{align*} 
We conclude that $\Nonlpart^1(\Zero)=\Nonlpart^0(\Zero)=\Zero$, and $\parl_\zz\Nonlpart^0(\Zero)=\Zero$ by applying \eqref{eq-def-N1N0}. This completes the proof.	
\end{proof}   

\noindent \textbf{Step 7:} By incorporating the aforementioned preparations, we close the proof of the nonlinear exponential decay rate.  

We recall that free boundary problem \eqref{eq-main-syst} is equivalent to system \eqref{eq-app-2} defined in $B_1$, and is also equivalent to dynamical system \eqref{eq-Lw+Nw} by Proposition \ref{p-infinite-ODE}. 

Applying part (i) in Theorem \ref{t-main result} and Lemma \ref{l-reformulate-L^2}, we obtain $|\pttR(t)|+|\dddot R(t)|\rightarrow 0$ and $\zz(t)\rightarrow\Zero$ as $t\rightarrow\infty$. We also deduce that $\parl_t\brho\rightarrow 0$ uniformly from \eqref{eq-app-2-1}. Combining these results, $\norm{\dot\zz(t)}_{\ell^2}\rightarrow\Zero$ follows. Without loss of generality, we may assume that $\norm{\dot\zz(t)}$ is small enough for all time. Applying Lemma \ref{l-e-converge} to  system \eqref{eq-Lw+Nw}, there exists $\alpha\in\mathbb{R}$ with $\module{\alpha}$ small, such that 
\begin{equation*}
\norm{\xx(t)-\alpha\UU+\yy(t)-h(\alpha\UU)}_{\ell^2}= O(e^{-\varpi_0 t}),\quad\text{as}\ t\rightarrow\infty.
\end{equation*} 
In other words, from the first equation in \eqref{eq-x+h(x)}, we obtain
\begin{equation*}
\norm{\begin{bmatrix}\Errorrho_{2}&\ErrorR&\ptErrorR&\theta_1&\theta_2&\cdots\end{bmatrix}-\begin{bmatrix}\hatd(\alpha)-\ssd&\hatr(\alpha)-\ssr&0&0&\cdots\end{bmatrix}}_{\ell^2}=O(e^{-\varpi_0t}),
\end{equation*} 
as $t\rightarrow\infty$. Clearly, $(\hatd(\alpha),\hatr(\alpha))$ coincides with the equilibrium $(\ssd[M,V],\ssr[M,V])$, since $\zz(t)\rightarrow\Zero$. This yields the exponential convergence of $(\Errorrho,\Errorrho_{1},\ErrorR,\ptErrorR)$
\begin{equation*}  \norm{\Errorrho_{1}}_{L^2(B_1)}^2+(\gdens(R(t),t)-\ssd[M,V])^2+(R(t)-\ssr[M,V])^2+\ptR(t)^2=O\Paren{e^{-2\varpi_0 t}}, 
\end{equation*} 
as $t\rightarrow\infty$, where we have used the changing variables in \eqref{eq-change-var}. Again from \eqref{eq-change-var}, it holds $\gdens(R(t)y,t)-\ssd[M,V]=\Errorrho_{1}(y,t)+\gdens(R(t),t)-\ssd[M,V]$,
and we deduce that
\begin{equation*}  
\norm{\gdens(R(t)y,t)-\ssd[M,V]}_{L^2_y(B_1)}^2=O\Paren{e^{-2\varpi_0t}},\quad \text{as}\ t\rightarrow\infty,
\end{equation*}
by triangle inequality. 
The remaining proof is similar to the $W^{1,\infty}$ estimates and the $C^{2+2\alpha}$ decay estimates in \cite[Lemma 9.8 and Proposition 9.9]{Lai2023}. This completes the proof of Theorem \ref{t-main result}.  
\appendix
\section{Regular spherical equilibria to system \eqref{eq-l-full}--\eqref{eq-lo-full}}\label{appendix-1}

In this appendix, we show that the regular spherically symmetric equilibrium solutions to the original full free boundary problem \eqref{eq-l-full}--\eqref{eq-lo-full} are determined by the mass-volume pairs, provided that the liquid temperature remains constant.
\begin{proposition}\label{p-full} 
Any regular spherical equilibrium solution to system \eqref{eq-l-full}--\eqref{eq-lo-full} with mass (of the gas) $M$ and liquid volume $V$ satisfying  $T_l\equiv\Tconst$ is given by
\begin{align*}
&\VV_{l}\equiv\Zero,\quad\VV_{g}\equiv\Zero,\quad p_{l}\equiv \frac{2\barsigma}{\ssor[M,V]},\quad\Omn\equiv B_{_{\ssr[M,V]}},\quad
\overline{\Omn}\cup\Omp\equiv B_{_{\ssor[M,V]}},\\
&\ssd[M,V]\equiv \frac2{\GASCONST \Tconst}\Paren{\frac{\sigma}{\ssr[M,V]}+\frac{\barsigma}{\ssor[M,V]}},\quad p_{g}\equiv 2\Paren{\frac{\sigma}{\ssr[M,V]}+\frac{\barsigma}{\ssor[M,V]}},\\
&T_{g}\equiv\Tconst,\quad T_{l}\equiv\Tconst,\quad s\equiv\Heatcapa\log\Paren{(\GASCONST\Tconst)^\Adi\Paren{\frac{2\sigma}{\ssr[M,V]}+\frac{2\barsigma}{\ssor[M,V]}}^{1-\Adi}}, 
\end{align*}
where $\ssr\in C^\infty((0,\infty)^2;(0,\infty))$ is the smooth map denoting the equilibrium radius of the gas bubble as defined in Theorem \ref{t-equilibria}
, $\ssor[M,V]=\sqrt[3]{\ssr[M,V]^3+\bV}$ represents the external radius of the entire gas-liquid region and $\bV=3V/4\pi$ is the modified liquid volume.
\end{proposition}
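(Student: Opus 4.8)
The plan is to reduce the computation of equilibria for the full system \eqref{eq-l-full}--\eqref{eq-lo-full} to the purely algebraic system \eqref{eq-equi-eq} that has already been solved in Theorem \ref{t-equilibria}. A regular spherical equilibrium has concentric ball domains $\Omn=B_{\ssr}$ and $\overline{\Omn}\cup\Omp=B_{\ssor}$ with $\ssor=\sqrt[3]{\ssr^3+\bV}$, and radial/spherically symmetric fields $\VV_l(x)=v_l(|x|)\,x/|x|$, $\VV_g(x)=v_g(|x|)\,x/|x|$, together with $p_l$, $\gdens$, $T_g$, $s$ functions of $|x|$ only.

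First I would show that all velocities vanish. Since the configuration is time-independent, both free boundaries are stationary, so $v_{-}=v_{+}=0$ in the kinematic conditions \eqref{eq-lg-full-1} and \eqref{eq-lo-full-1}. The divergence-free condition \eqref{eq-l-full-2} gives $r^2 v_l(r)\equiv\text{const}$ on $\ssr\le r\le\ssor$; evaluating at $r=\ssr$, where $v_l=v_{-}=0$, forces $\VV_l\equiv\Zero$. For the gas, the stationary continuity equation \eqref{eq-g-full-1} reads $\Divergence(\gdens\VV_g)=0$, i.e. $r^2\gdens(r)v_g(r)\equiv\text{const}$ on $0\le r\le\ssr$; regularity at the origin forces this constant to be $0$, and since a regular equilibrium keeps $\gdens>0$ on $\overline{\Omn}$, we conclude $\VV_g\equiv\Zero$.

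Next I would show that the bulk fields are constant and identify them. With $\VV_l\equiv\Zero$ and $\SSS_l(\Zero)=\Zero$, the liquid momentum balance \eqref{eq-l-full-1} collapses to $\nabla p_l=\Zero$, so $p_l$ is constant, and the liquid temperature equation \eqref{eq-l-full-3} is then satisfied identically by the hypothesis $T_l\equiv\Tconst$. Similarly \eqref{eq-g-full-2} gives $\nabla p_g=\Zero$, and the gas energy equation \eqref{eq-g-full-3} reduces to $\Divergence(\Thermal\nabla T_g)=0$, i.e. $\Laplace T_g=0$ in $B_{\ssr}$; a radial harmonic function regular at the origin is constant, so $T_g\equiv T_g|_{\parl\Omn}=\Tconst$ by \eqref{eq-lg-full-3}. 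Then \eqref{eq-g-full-4} forces $\gdens\equiv p_g/(\GASCONST\Tconst)=:\ssd>0$ and \eqref{eq-g-full-5} yields the constant value $s\equiv\Heatcapa\log(p_g/\ssd^\Adi)$. It remains to read off the jump conditions: on $\parl B_{\ssor}$ one has $\meancurv=-2/\ssor$, so \eqref{eq-lo-full-2} gives $p_l=2\barsigma/\ssor$; on $\parl B_{\ssr}$ one has $\meancurv=-2/\ssr$, so \eqref{eq-lg-full-2} gives $p_g-p_l=2\sigma/\ssr$, whence $p_g=2(\sigma/\ssr+\barsigma/\ssor)$ and $\ssd=\frac{2}{\GASCONST\Tconst}(\sigma/\ssr+\barsigma/\ssor)$. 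Combined with mass conservation $M=\GASMASS[\gdens,\ssr]=\frac{4\pi}{3}\ssd\ssr^3$, this is precisely the system \eqref{eq-equi-eq}, so Theorem \ref{t-equilibria} gives $(\ssr,\ssd)=(\ssr[M,V],\ssd[M,V])$ uniquely; substituting back into the formulas above produces the stated expressions for $p_g$, $p_l$, and $s$.

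The argument is essentially bookkeeping once the velocities are eliminated, and that elimination is the only step requiring care: it relies on the steady-state reading of the kinematic conditions \eqref{eq-lg-full-1}, \eqref{eq-lo-full-1} to obtain $v_{-}=v_{+}=0$, on spherical symmetry together with incompressibility for the liquid, and on regularity at the origin together with positivity of $\gdens$ for the gas. Everything after that mirrors the proof of Theorem \ref{t-equilibria}, the one addition being the verification that the equations that are not used to pin down the equilibrium --- the two energy equations \eqref{eq-l-full-3}, \eqref{eq-g-full-3} and the viscous-stress contributions in \eqref{eq-lg-full-2}, \eqref{eq-lo-full-2} --- are automatically consistent with a static, isothermal state.
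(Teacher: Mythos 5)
Your proposal is correct and follows essentially the same route as the paper's proof in Appendix \ref{appendix-1}: eliminate the velocities via incompressibility/continuity and the kinematic boundary conditions, conclude $p_l$, $p_g$, $T_g$, $\gdens$, $s$ are constants, read the two stress jump conditions with $\meancurv=-2/R$ to get $p_l=2\barsigma/\ssor$ and $p_g-p_l=2\sigma/\ssr$, and reduce to the algebraic system \eqref{eq-equi-eq} handled by Theorem \ref{t-equilibria}. The only cosmetic differences (using regularity at the origin instead of $v_g(\ssr)=0$ to kill $\gdens v_g$, and the explicit form of radial harmonic functions instead of the maximum principle for $T_g$) are immaterial.
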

\begin{proof}
Assume that $\VV_{l}(x)=v_{l}(r)x/r$ with $r=\module{x}$. The divergence-free condition reads $\parl_rv_{l}(r)+(2/r)v_{l}(r)=0$, or $\parl_r(r^2 v_{l}(r))=0,\ssr\le r\le\ssor$. Therefore, $v_{l}(r)=a/r^2,\ssr\le r\le\ssor$ for some constant $a$. However, the boundary condition \eqref{eq-lg-full-1} implies $v_{l}(\ssr)=0$. Thus, $v_{l}\equiv0$, and \eqref{eq-l-full-1}  becomes $\nabla p_{l}=0$. From \eqref{eq-lo-full-2}, we conclude that the pressure $p_{l}\equiv2\barsigma/\ssor$.

For the gas velocity $v_{g}$, \eqref{eq-g-full-1} implies $\parl_r(r^2\gdens v_{g})=0$ for $r\le\ssr$. Therefore, $r^2\gdens v_{g}$ is a constant.
Again by \eqref{eq-lg-full-1}, $v_{g}(\ssr)=0$ follows and $\gdens v_{g}\equiv0$. Since we consider the regular solution, $\gdens \neq0$ by  \eqref{eq-g-full-5}. Therefore, $\VV_{g}\equiv\Zero$ and $p_{g}$ is a constant from \eqref{eq-g-full-2}. Now that $v_{l}=v_{g}\equiv0$, \eqref{eq-lg-full-2} yields $p_{g}\equiv2\sigma/\ssr+p_{l}=2\sigma/\ssr+2\barsigma/\ssor$. Moreover, \eqref{eq-g-full-3} becomes $\Laplace T_{g}=0$ in $B_{\ssr}$. Since $T_{g}$ is regular, we have $T_{g}\equiv T_{g}(\ssr)=\Tconst$ by \eqref{eq-lg-full-3} and the maximum principle. 

Finally, by \eqref{eq-g-full-4}, $\gdens\equiv\Paren{2\sigma/\ssr+2\barsigma/\ssor}/\Paren{\GASCONST \Tconst}$.
Due to the conservation of mass \eqref{eq-conserve}, $M=\frac{4\pi}3\gdens\ssr^3$. These imply that the spherically symmetric equilibrium $(\gdens,\ssr)$ can be obtained by solving the algebraic equation \eqref{eq-equi-eq}. Therefore, the proposition follows from the same arguments as in the proof of Theorem \ref{t-equilibria}.
\end{proof}  
\section{Well-posedness and Lyapunov stability of problem \eqref{eq-main-syst}}\label{appendix-2}

To state the well-posedness and the Lyapunov stability results, given any gas density and bubble radius $(\gdens(\cdot,t),R(t))$, we define $\brho(y,t)=\gdens(R(t)y,t),y\in\overline{B_1}$ for $t>0$, and introduce the norm $\norm{\cdot}_{C^{2+2\alpha}(B_A)}$ as follows. For a function $f(r),r<A$, where $A>0$ is a constant, we define a radial function $\tilde{f}(x)=f(|x|)$ for $x\in B_A$ and
\begin{equation*}
\norm{f}_{C^{2+2\alpha}_r}=\norm{\tilde{f}}_{C^{2+2\alpha}_x}=\max_{|\beta|\le 2}\sup_{x\in B_A}\module{D^\beta\tilde{f}(x)}+\sup_{x_1\neq x_2\in B_A}\frac{\module{D^2\bar{f}(x_1)-D^2\bar{f}(x_2)}}{\module{x_1-x_2}^{2\alpha}}.
\end{equation*} 

We state the local well-posedness by adapting the arguments in \cite[Theorem 3.1]{Biro2000}
\begin{proposition}\label{p-lwp}
Fix any liquid volume $V$, for problem \eqref{eq-main-syst}  with initial data $(\gdens_0,R_0,\ptR_0)$, where $R_0>0$ and $\gdens_0\in C^{2+2\alpha}\Paren{[0,R_0];(\eta,\infty)}$ for some $\alpha\in(0,1/2)$ and a constant $\eta>0$, there exists a unique solution $R\in C^{3+\alpha}_t([0,\delta])$ and $\gdens\in C^{1+\alpha}_t([0,\delta];C^{2+2\alpha}_r([0,R(t))))$, where $\delta=\delta(\norm{\gdens_0}_{C^{2+2\alpha}})>0$.
\end{proposition}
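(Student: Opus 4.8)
The plan is to run a short-time Banach fixed point scheme in anisotropic Hölder spaces, following \cite[Theorem 3.1]{Biro2000} with the modifications dictated by the bounded geometry. The first move is to pull the moving domain back to the unit ball via $x=R(t)y$, $y\in\overline{B_1}$, writing $\brho(y,t)=\gdens(R(t)y,t)$; then \eqref{eq-main-syst-1} becomes a quasilinear parabolic equation for $\brho$ on $B_1$ (a drift term $-(\ptR/R)\,y\cdot\nabla_y\brho$ appears from the change of variables, and $\Laplace_y\log\brho=\Laplace_y\brho/\brho-|\nabla_y\brho|^2/\brho^2$ is the quasilinear part), whose coefficients depend on $R(t)$, $\ptR(t)$ and on the scalar $\dot p/p=\parl_t\brho(1,t)/\brho(1,t)$ obtained from \eqref{eq-main-syst-4}, while \eqref{eq-main-syst-2} and \eqref{eq-main-syst-3} turn respectively into a first-order relation expressing $\ptR$ through the boundary traces $\parl_y\brho(1,\cdot)$, $\parl_t\brho(1,\cdot)$, and an algebraic relation between $\brho(1,\cdot)$ and $(R,\ptR,\pttR,\bR,\ptbR)$. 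The crucial structural point is that the coefficient $\ldens\bigl(R-R^2/\bR\bigr)/(\GASCONST\Tconst)$ of $\pttR$ in \eqref{eq-main-syst-3} never vanishes — this is exactly where the positivity of the liquid volume enters, since it is equivalent to $R<\bR$ — so \eqref{eq-main-syst-3} may be solved for $\pttR$, yielding a genuine second-order ODE $\pttR=G\bigl(R,\ptR,\brho(1,\cdot),V\bigr)$ whose forcing inherits the temporal regularity of the boundary trace $\brho(1,\cdot)$. Before anything else, since $\gdens_0\ge\eta$, a continuity argument on a sufficiently short interval keeps $\brho$, hence $1/\brho$, $\log\brho$ and $1/\brho^2$, bounded away from $0$, which makes all the nonlinearities smooth where they occur.

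For the iteration, fix $\delta>0$ and let $\mathcal{M}$ be the closed subset of $C^{1+\alpha}_t\bigl([0,\delta];C^{2+2\alpha}_r([0,1])\bigr)\times C^{3+\alpha}_t([0,\delta])$ of pairs $(\brho,R)$ that attain the data $\bigl(\gdens_0(R_0\cdot),R_0,\ptR_0\bigr)$ at $t=0$ together with the corner compatibility forced by \eqref{eq-main-syst-3}, lie in a fixed ball around the data, and satisfy $\brho\ge\eta/2$ and $c^{-1}\le R\le c$. Define $\mathcal{T}(\brho^{\mathrm{old}},R^{\mathrm{old}})=(\brho^{\mathrm{new}},R^{\mathrm{new}})$ as follows: form $\bR^{\mathrm{old}}$, $\ptbR^{\mathrm{old}}$ and $q^{\mathrm{old}}=\parl_t\brho^{\mathrm{old}}(1,\cdot)/\brho^{\mathrm{old}}(1,\cdot)$; solve the linear parabolic Dirichlet problem on $B_1\times[0,\delta]$ obtained by freezing the coefficients of the flattened form of \eqref{eq-main-syst-1} at $(\brho^{\mathrm{old}},R^{\mathrm{old}},q^{\mathrm{old}})$, with boundary datum $\brho^{\mathrm{new}}(1,\cdot)$ read off the right-hand side of \eqref{eq-main-syst-3} evaluated at $R^{\mathrm{old}},\ptR^{\mathrm{old}},\pttR^{\mathrm{old}},\bR^{\mathrm{old}},\ptbR^{\mathrm{old}}$ and initial datum $\gdens_0(R_0\cdot)$ — classical parabolic Schauder theory on the ball provides a unique $\brho^{\mathrm{new}}\in C^{1+\alpha}_tC^{2+2\alpha}_r$; then integrate the second-order ODE $\pttR^{\mathrm{new}}=G\bigl(R^{\mathrm{new}},\ptR^{\mathrm{new}},\brho^{\mathrm{new}}(1,\cdot),V\bigr)$ with $R^{\mathrm{new}}(0)=R_0$, $\ptR^{\mathrm{new}}(0)=\ptR_0$, whose forcing is in $C^{1+\alpha}_t$, so that $R^{\mathrm{new}}\in C^{3+\alpha}_t$. (At a fixed point one must still recover \eqref{eq-main-syst-2} itself, which follows from a compatibility identity at $t=0$ propagated by the kinematic relation; equivalently one can impose \eqref{eq-main-syst-2} as a Neumann-type condition for $\brho^{\mathrm{new}}$ and use \eqref{eq-main-syst-3} to advance $R$ — either bookkeeping closes.) Choosing $\delta$ small, depending only on $\norm{\gdens_0}_{C^{2+2\alpha}}$ together with $\eta$, $R_0$, $\ptR_0$, $V$ and the fixed parameters, makes $\mathcal{T}:\mathcal{M}\to\mathcal{M}$ and a contraction in the weaker norm of $C^{\alpha/2}_tC^{2+\alpha}_r\times C^{2+\alpha/2}_t$, with the lost regularity recovered on $\mathcal{M}$ by interpolation; pushing the unique fixed point back through $x=R(t)y$ gives the desired $(\gdens,R)$ with the stated regularity.

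The main difficulty is not any individual estimate — those are routine parabolic Schauder bounds on the ball and Gronwall-type ODE estimates — but the feedback between interior and boundary: the parabolic coefficient $\dot p/p$ and the Dirichlet datum $\brho(1,\cdot)$ are both slaved to traces of $\brho$ and to $\pttR$, so the function spaces and the choice of iteration unknowns must be arranged so that (i) the frozen linear problem is solvable with precisely the regularity that one then propagates, (ii) the corner compatibility relating $\ptR_0$, $\gdens_0$ and \eqref{eq-main-syst-3} is consistent at $t=0$ and survives the iteration, and (iii) the $\pttR$-coefficient in \eqref{eq-main-syst-3} stays bounded below — which is guaranteed here precisely because $V>0$ forces $R<\bR$, and is the one genuinely new ingredient relative to the unbounded-liquid treatment of \cite{Biro2000}, where the external surface is replaced by a prescribed far-field pressure. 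Once the fixed point is produced, uniqueness in the stated class follows by subtracting two solutions, writing the difference equations for $(\brho,R)$, and closing a Gronwall estimate on $[0,\delta]$, using again the uniform positivity of the densities to control the quasilinear terms.
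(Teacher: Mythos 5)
Your proposal follows essentially the same route as the paper: flatten the moving domain to the unit ball via $x=R(t)y$ and adapt the fixed-point/Schauder scheme of \cite[Theorem 3.1]{Biro2000}, the only genuinely new ingredient being that the finite liquid volume keeps the boundary relation \eqref{eq-main-syst-3} well-behaved (your observation that $V>0$ forces $R<\bR$, so the $\pttR$-coefficient does not degenerate, is the same structural point the paper makes by noting $\bR=(R^3+\bV)^{1/3}>\bV^{1/3}$ renders the extra terms analytic in $R,\ptR,\pttR$). You simply spell out the iteration that the paper delegates to the citation, so no gap; your inclusion of the transport term $-(\ptR/R)\,y\cdot\nabla_y\brho$ in the flattened equation is in fact the careful bookkeeping.
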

\begin{proof} 
By changing the variable $x=R(t)y$, we reduce problem \eqref{eq-main-syst} on $B_{\bR(t)}$ to a problem on the fixed domain
\begin{subnumcases}	
{\label{eq-app-2}} 
\parl_t\brho(y,t)=\frac{\Thermal\Laplace_y\log\brho(y,t)}{\Adi \Heatcapa R^2}+\frac{\dot{p}}{\Adi p}\Paren{\frac{y\parl_y\brho(y,t)}{3}+\brho(y,t)},\  \module{y}\le 1,&$t>0$, \label{eq-app-2-1}\\
\ptR=-\frac{\Thermal\parl_y\brho(1,t)}{\Adi\Heatcapa R\brho(1,t)^2}-\frac{R\dot{p}}{3\Adi p},\quad p(t)=\GASCONST\Tconst\brho(1,t),&$t>0$,\\
\brho(1,t)=\frac1{\GASCONST\Tconst}\left[4\mu_l\Paren{\frac{\ptR}{R}-\frac{\ptbR}{\bR}}+\frac{2\sigma}{R}+\frac{2\barsigma}{\bR}\right.\nonumber\\
\qquad\qquad\qquad\ \ \left.+\ldens\Paren{\frac{\bR-R}{\bR}R\pttR+\Paren{\frac32-\frac{2R}{\bR}+\frac{R^4}{2\bR^4}}\ptR^2}\right],&$t>0$,\label{eq-app-2-3} 
\end{subnumcases}
where $\brho(y,t)=\gdens(R(t)y,t)$.
Compared to the proof in \cite[Theorem 3.1]{Biro2000}, the extra terms in \eqref{eq-app-2-3} are analytic in $R>0$ and $\ptR$, since $\bR=\Paren{R^3+\bV}^{\frac 13}>\bV^{\frac 13}$ and $\ptbR =R^2\ptR /\bR^2$. Thus, \eqref{eq-app-2-3} can be written in the same form as equation (3.18) in \cite[Theorem 3.1]{Biro2000}, i.e., an analytic function of $R,\ptR,\pttR$ and $t$. Then, \eqref{eq-main-syst} can be treated as that in \cite[Theorem 3.1]{Biro2000},  and one can follow the same procedure to complete the proof.
\end{proof}  

The global well-posedness and stability results are derived following the proof in \cite{Biro2000} (see also \cite[Section 6]{Lai2023}),  where the global well-posedness of the free boundary problem \eqref{eq-main-syst} is established under conditions where the liquid volume is infinite, there are no viscous terms at the gas-liquid interface, and the external far-field pressure is constant. Additionally, they showed the Lyapunov stability of the problem when the initial data are sufficiently close to a spherically symmetric equilibrium.

For the problem we are considering here for which the liquid has a finite volume $V$, the presence of viscous terms on the gas-liquid interface (see \eqref{eq-b-main-3}) and the external liquid free surface (see \eqref{eq-b-main-5}) lead to the boundary condition \eqref{eq-main-syst-3}. Consequently, a negative term,  $- 16\pi\mu_l \bV (\ptR (t))^2 R(t)/(R(t)^3+\bV)$, appears on the right-hand side of the energy dissipation law \eqref{eq-energy-dissipation-sph}. As a result, the arguments used in the proof of \cite[Theorem 4.1]{Biro2000}, particularly those involving key energy dissipation estimates (4.16) and (4.40) in \cite{Biro2000}, remain applicable.

More precisely, fix the gas mass $M$ and the liquid volume $V$, given $\vare_0>0$, there exists $\eta_0=\eta_0(\vare_0)>0$ such that the following holds: for any mass-preserving ($M=\GASMASS[\gdens_0,R_0]$) initial data $(\gdens_0,R_0,\dot R_0)$ with liquid volume $V$ satisfying 
\begin{equation}\label{eq-2000-1} 
\norm{\gdens_0(R_0y)-\ssd[M,V]}_{C^{2+2\alpha}_y(B_1)}+\module{R_0-\ssr[M,V]}+|\ptR_0|\le\eta_0,
\end{equation}
the global-in-time solution satisfies
\begin{align} 
&\norm{\gdens(R(t)y,t)-\ssd[M,V]}_{C^{2+2\alpha}_y(B_1)}+\module{R(t)-\ssr[M,V]}+|\ptR|\le\vare_0,\label{eq-2000-2}\\
&\norm{p_g}_{C^{1+\alpha}_t(\mathbb{R}_+)}\le\vare_0,\ \text{and}\ \norm{R(t)-\ssr[M,V]}_{C^{3+\alpha}_t(\mathbb{R}_+)}\le\vare_0,\ \forall t>0.\label{eq-2000-3}
\end{align} 
Therefore, given the gas mass $M$ and the liquid volume $V$, we have the spherical equilibrium Lyapunov stability for mass-preserving and volume-invariant perturbations to the equilibrium $(\ssd[M,V],\ssr[M,V])$.
\section{Center manifold theory}\label{appendix-4} 

In this short appendix, we recall the center manifold theory developed in \cite{Lai2023}, and we refer to \cite{Carr1981} for more information on this topic. 
Consider the following equation on a Banach space $\Paren{Z,\norm{\cdot}}$
\begin{equation}\label{eq-app-4-1}
\dot\zz=\Linearpart\zz+\Nonlpart(\zz, \dot\zz),\quad \zz(0)\in Z, 
\end{equation} 
where $\Nonlpart(\zz,\ww):Z\times Z\rightarrow Z$ has a uniformly continuous second order derivative satisfying $\Nonlpart(\Zero,\ww)=\Zero=\parl_{(\zz,\ww)}\Nonlpart(\Zero,\Zero)=\Zero$. We further assume that
\begin{enumerate}[(i)]
\item $Z=X\oplus Y$, where $X$ is a finite-dimensional $\Linearpart$-invariant subspace and $Y$ is a closed  $e^{\Linearpart t}$-invariant subspace. 
\item All the eigenvalues of $\Linearpart|_X$ have zero real parts. 
\item Let $\mathcal{Q}_1:Z\rightarrow X$ be a projection, and $\mathcal{Q}_2=\identity_X-\mathcal{Q}_1$. There exist positive constants $\varpi$ and $c$, such that
\begin{equation}\label{eq-app-4-2}
\norm{e^{\Linearpart t}\mathcal{Q}_2}_{Y\rightarrow Y}\le ce^{-\varpi t},\quad t\ge0.
\end{equation}  
\end{enumerate}

Decompose a solution to \eqref{eq-app-4-1} as $\zz=\xx+\yy=\mathcal{Q}_1\zz+\mathcal{Q}_2\zz$. Then, \eqref{eq-app-4-1} becomes
\begin{equation}\label{eq-app-4-3}
\begin{cases}
\dot\xx=\Linearpart|_X\xx+f(\xx,\yy,\dot\xx,\dot\yy),&\text{where}\ f(\xx,\yy,\dot\xx,\dot\yy)=\mathcal{Q}_1\Nonlpart(\xx+\yy,\dot\xx+\dot\yy),\\
\dot\yy=\mathcal{Q}_2\Linearpart\yy+g(\xx,\yy,\dot\xx,\dot\yy),&\text{where}\ g(\xx,\yy,\dot\xx,\dot\yy)=\mathcal{Q}_2\Nonlpart(\xx+\yy, \dot\xx+\dot\yy).
\end{cases}
\end{equation} 

Recall that an invariant manifold for \eqref{eq-app-4-3} is a curve $\yy = h(\xx)$, defined for $|\xx|$ small, such that the solution $(\xx(t),\yy(t))$ passing through $(\xx(0),h(\xx(0)))$ satisfies $\yy(t)=h(\xx(t))$. A center manifold is an invariant manifold that is tangent to the subspace $X$ at the origin. Let $\mathcal M$ be a center manifold for \eqref{eq-app-4-3} given by $\yy=h(\xx)$. The equation on the center manifold is given by
\begin{equation}\label{eq-app-4-4}
\dot\xx=\Linearpart|_X\xx+f(\xx,h(\xx),\dot\xx,h^\prime(\xx)\dot\xx). 
\end{equation} 
Assume that $\zz(t)$ converges to some point in $\mathcal M$ as $t\rightarrow\infty$, and that $\sup_{t\ge 0}\norm{\dot\zz(t)}$ is sufficiently small. Then, the following lemma holds: 
\begin{lemma}\label{l-e-converge}
Denote by $(\xx(t),\yy(t))$ a solution of \eqref{eq-app-4-3}. Assume that there exists $\vare>0$ such that if $\norm{(\xx(0),\yy(0))}<\vare$, then $\norm{(\dot\xx(t),\dot\yy(t))}<\vare$  for any $t\ge0$.
Then there exist constants $C_1,\varpi_1>0$ such that  $\norm{\yy(t)-h(\xx(t))}\le C_1e^{-\varpi_1t}\norm{\yy(0)-h(\xx(0))}$ for $t\ge0$. 

If we further assume that the zero solution of \eqref{eq-app-4-4} is Lyapunov stable, then there exists a solution $\hat{\xx}(t)$ of \eqref{eq-app-4-4} such that  $\xx(t)=\hat{\xx}(t)+O(e^{-\varpi_0t})$ and $\yy(t)=h(\hat{\xx}(t))+O(e^{-\varpi_0 t})$, as $t\rightarrow\infty$, where $\varpi_0=\min(\varpi,\varpi_1)$ and $\varpi$ is given in \eqref{eq-app-4-2}.
\end{lemma}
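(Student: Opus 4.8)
\textbf{Proof proposal for Lemma \ref{l-e-converge}.}

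The plan is to treat the $\yy$-equation in \eqref{eq-app-4-3} as a perturbation of the linear decay $e^{\Linearpart t}\mathcal{Q}_2$ near the center manifold, and the $\xx$-equation as a slowly-varying flow that shadows a genuine trajectory on the center manifold. First I would introduce the deviation variable $\zz^\sharp(t) = \yy(t) - h(\xx(t))$. Differentiating and using both equations of \eqref{eq-app-4-3} together with $\Linearpart|_X = \Zero$ (from (ii), since the only eigenvalue of $\Linearpart|_X$ is $0$ and $X$ is one-dimensional here, but in general one uses that $h$ is the center manifold and satisfies the invariance PDE $h'(\xx) f(\xx,h(\xx),\dot\xx, h'(\xx)\dot\xx) = \mathcal{Q}_2\Linearpart h(\xx) + g(\xx,h(\xx),\dots)$), one obtains a differential equation of the form $\dot\zz^\sharp = \Linearpart|_Y \zz^\sharp + \mathcal{N}^\sharp$, where $\mathcal{N}^\sharp$ collects the difference of the nonlinear terms evaluated at $(\xx,\yy,\dot\xx,\dot\yy)$ and at $(\xx,h(\xx),\dot\xx,h'(\xx)\dot\xx)$. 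Because $\Nonlpart$ has a uniformly continuous second derivative vanishing to second order at the origin, and because $\|(\dot\xx,\dot\yy)\| < \vare$ for all $t$ by hypothesis, $\mathcal{N}^\sharp$ is Lipschitz in $\zz^\sharp$ with a Lipschitz constant that is $O(\vare)$ uniformly in $t$ (it also absorbs the $\dot\zz^\sharp$-dependence: the $\mathcal{N}^1$-part contributes a factor that can be moved to the left side and inverted for $\vare$ small). Then the variation-of-constants formula $\zz^\sharp(t) = e^{\Linearpart|_Y t}\zz^\sharp(0) + \int_0^t e^{\Linearpart|_Y(t-s)} \mathcal{N}^\sharp(s)\, ds$ combined with the exponential bound \eqref{eq-app-4-2}, $\|e^{\Linearpart t}\mathcal{Q}_2\| \le c e^{-\varpi t}$, and a Grönwall argument applied to $e^{\varpi t}\|\zz^\sharp(t)\|$, yields $\|\zz^\sharp(t)\| \le C_1 e^{-\varpi_1 t}\|\zz^\sharp(0)\|$ with $\varpi_1 = \varpi - O(\vare) > 0$ for $\vare$ small enough. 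This establishes the first assertion.

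For the second assertion, I would feed the decay $\|\yy(t) - h(\xx(t))\| = O(e^{-\varpi_1 t})$ back into the $\xx$-equation. Write $\dot\xx = \Linearpart|_X\xx + f(\xx, h(\xx), \dot\xx, h'(\xx)\dot\xx) + \big[f(\xx,\yy,\dot\xx,\dot\yy) - f(\xx,h(\xx),\dot\xx,h'(\xx)\dot\xx)\big]$; the bracketed term is $O(e^{-\varpi_1 t})$ by the Lipschitz property of $f$ and the just-proved estimate (again absorbing the $\dot\xx$-dependence for $\vare$ small). So $\xx(t)$ solves the center-manifold equation \eqref{eq-app-4-4} up to an exponentially small forcing. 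The standard way to produce the shadowing trajectory $\hat\xx(t)$ is to use the assumed Lyapunov stability of the zero solution of \eqref{eq-app-4-4}: one sets up the integral equation $\hat\xx(t) = \xx(t) - \int_t^\infty (\text{flow difference})$ — more precisely one constructs $\hat\xx$ as the solution of \eqref{eq-app-4-4} with a suitably chosen initial datum so that $\xx(t) - \hat\xx(t) \to 0$, using that the exponentially small perturbation, integrated against the (at worst) stable/neutral behaviour guaranteed by Lyapunov stability, stays $O(e^{-\varpi_0 t})$ with $\varpi_0 = \min(\varpi,\varpi_1)$. Then $\yy(t) = h(\xx(t)) + O(e^{-\varpi_1 t}) = h(\hat\xx(t)) + h'(\cdot)(\xx(t)-\hat\xx(t)) + O(e^{-\varpi_1 t}) = h(\hat\xx(t)) + O(e^{-\varpi_0 t})$ by the differentiability of $h$.

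The main obstacle is the careful handling of the implicit dependence on $\dot\zz$ (equivalently $\dot\xx,\dot\yy$) throughout: because $\Nonlpart = \Nonlpart(\zz,\dot\zz)$ and likewise $f,g$ depend on the time derivatives, the deviation equation for $\zz^\sharp$ is quasilinear in $\dot\zz^\sharp$, and one must verify that $\identity - \mathcal{N}^1(\cdot)$-type operators are invertible with uniformly bounded inverse for $\vare$ small so that the problem can be rewritten in the standard semilinear form before applying Grönwall. A secondary delicate point is making the shadowing construction in the second half rigorous: Lyapunov stability alone (without asymptotic stability) of \eqref{eq-app-4-4} gives only that nearby solutions stay nearby, so the existence of the particular solution $\hat\xx$ tracking $\xx$ to within $O(e^{-\varpi_0 t})$ requires the fixed-point/limiting argument in \cite{Carr1981}-style, combined with the exponential smallness of the forcing; in the present application (Proposition \ref{p-center}) this is in fact easy because the dynamics on the center manifold is identically trivial ($\dot\alpha \equiv 0$), so $\hat\xx$ is simply a constant and the argument degenerates to a direct limit $\hat\xx = \lim_{t\to\infty}\xx(t)$, but the lemma is stated for the general case and should be proved as such.
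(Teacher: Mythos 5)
The paper itself contains no proof of this lemma: Appendix \ref{appendix-4} explicitly \emph{recalls} it from the center manifold theory developed in \cite{Lai2023} (see also \cite{Carr1981}) and states it without argument, so there is no in-paper proof to compare against. Your outline is the standard route used in those references: set $\zz^\sharp=\yy-h(\xx)$, use the invariance identity satisfied by $h$ to reduce the $\yy$-equation of \eqref{eq-app-4-3} to a semilinear equation for $\zz^\sharp$ driven by $\Linearpart$ restricted to $Y$, conclude exponential attraction via Duhamel with the bound \eqref{eq-app-4-2} and Gr\"onwall, and then obtain the asymptotic phase $\hat\xx$ by a Carr-type shadowing/limiting argument that uses the Lyapunov stability of the reduced equation \eqref{eq-app-4-4}. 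Your identification of the quasilinear dependence on $\dot\zz$ --- the need to absorb the $\Nonlpart^1(\zz)\dot\zz$ contribution by inverting operators of the form $\identity-\Nonlpart^1$ with uniformly bounded inverse for $\vare$ small --- is precisely where the adaptation of Carr's classical theory in \cite{Lai2023} lies, so the sketch is faithful to the intended proof.

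One point you would need to make explicit in a full write-up: the Gr\"onwall step requires the Lipschitz constant of the nonlinear difference term to be $O(\vare)$ along the trajectory, and since $\parl_\zz\Nonlpart$ and $\Nonlpart^1$ are only small when $\zz$ itself is small, the stated hypothesis (smallness of $(\dot\xx(t),\dot\yy(t))$ for small initial data) does not by itself confine $(\xx(t),\yy(t))$ to a small neighborhood for all $t\ge0$. One either works with a cut-off nonlinearity, as in \cite{Carr1981}, or invokes the a priori smallness of the trajectory, which in the present application is supplied by the Lyapunov stability \eqref{eq-2000-2} and the asymptotic stability established before Step 7 of Section \ref{s-exponential convergence}. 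Your closing remark that in this application the reduced dynamics is trivial ($\dot\alpha\equiv0$, Proposition \ref{p-center}), so that $\hat\xx$ degenerates to a constant, is correct and matches how the lemma is actually used in the paper.
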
  
\section{Asymptotic stability in $\ell^2$ and the spectrum analysis}\label{appendix-3}

We first reformulate the asymptotic stability result in the first part of Theorem \ref{t-main result} for system \eqref{eq-l-main}--\eqref{eq-b-main} by using the variable $\zz$ to express it in terms of the equivalent system \eqref{eq-Lw+Nw} within Banach space $\ell^2$.
The proof involves adapting  \cite[Proposition 9.2]{Lai2023}, as it primarily addresses the results related to \eqref{eq-main-syst-1} and \eqref{eq-main-syst-2}. %(or equivalently, \eqref{eq-uRz-u} and \eqref{eq-uRz-R}).

\begin{lemma}\label{l-reformulate-L^2}
Fix any gas mass $M$ and liquid volume $V$. For any initial data $(\gdens_0, R_0,\ptR_0)\in C^{2+2\alpha}_r(B_{R_0})\times\mathbb{R}_+\times\mathbb{R}$ with $\alpha\in(0,1/2)$ such that $\GASMASS[\gdens_0,R_0]=M$, denote $\Errorrho_{2}(0)=\gdens_0(R_0)-\ssd[M,V],\ErrorR(0)=R_0-\ssr[M,V]$, and $\theta_k(0)=\int_{B_1}(\gdens_0(R_0y)-\gdens_0(R_0))\Xi_k(y)dy$, where $\Xi_k$ is defined in Proposition \ref{p-infinite-ODE}. Then, we have $\zz(0)=(\Errorrho_{2}(0),\ErrorR(0),\ptR_0,\theta_1(0),\theta_2(0),\cdots)^\top\in\ell^2$, and the sequence $\{j^2\theta_j(0)\}_{j=1}^\infty\in\ell^2$.  

Furthermore, assume that \eqref{eq-main result-condition} holds. Let $(\gdens, R)\in C^{2+2\alpha,1+\alpha}_{r,t}(B_{R(t)}\times[0,\infty))\times C^{3+\alpha}_t$ be the global solution of \eqref{eq-main-syst} with initial data $(\gdens_0, R_0,\ptR_0)$ and liquid volume $V$ as in Theorem \ref{t-main result}. Let $\zz$ be the corresponding solution to system \eqref{eq-Lw+Nw}. Then, it follows that $\{j^2\theta_j\}_{j=1}^\infty\in\ell^2$ and $\norm{\dot\zz}_{\ell^2}+\norm{\zz(t)}_{\ell^2}\rightarrow 0$, as $t\rightarrow\infty$.
\end{lemma}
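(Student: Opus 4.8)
\textbf{Proof proposal for Lemma \ref{l-reformulate-L^2}.}

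The plan is to split the argument into two independent parts: (a) the membership statements for the initial data, and (b) the decay of $\norm{\zz(t)}_{\ell^2}+\norm{\dot\zz(t)}_{\ell^2}$ along the global solution. For part (a), I would start from the Fourier coefficients $\theta_k(0)=\int_{B_1}\big(\gdens_0(R_0y)-\gdens_0(R_0)\big)\Xi_k(y)\,dy$ and integrate by parts twice, using $-\Laplace_y\Xi_j=\eigv_j\Xi_j$, $\eigv_j=(j\pi)^2$, and the Dirichlet condition $\Xi_j|_{y=1}=0$. Since $\gdens_0(R_0\,\cdot)-\gdens_0(R_0)$ vanishes on $\parl B_1$ and lies in $C^{2+2\alpha}_y(B_1)$, the boundary terms drop and one gets $\eigv_j\theta_j(0)=\int_{B_1}(-\Laplace_y)\big(\gdens_0(R_0y)\big)\Xi_j\,dy$, i.e. $\{j^2\theta_j(0)\}$ are (up to the constant $\pi^2$) the Fourier–Bessel coefficients of an $L^2(B_1)$ function, hence in $\ell^2$ by Bessel's inequality; since $\{j^2\theta_j(0)\}\in\ell^2$ trivially implies $\{\theta_j(0)\}\in\ell^2$, and $\Errorrho_2(0),\ErrorR(0),\ptR_0$ are finite scalars controlled by \eqref{eq-main result-condition}, we conclude $\zz(0)\in\ell^2$. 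This part is routine and mirrors the setup already used in Proposition \ref{p-infinite-ODE}.

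For part (b), the strategy is to transfer the $C^{2+2\alpha}$ asymptotic stability \eqref{eq-main- results-i} already proved in Theorem \ref{t-main result}(i) into $\ell^2$ control of $\zz$ and $\dot\zz$. From \eqref{eq-main- results-i} we have $\norm{\gdens(R(t)y,t)-\ssd}_{C^{2+2\alpha}_y(B_1)}+|R(t)-\ssr|+|\ptR|+|\pttR|+|\dddot R|\to 0$; the first two give $\Errorrho_2(t)\to 0$ and $\ErrorR(t)\to 0$, and $\ptR\to 0$ gives $\ptErrorR\to 0$. For the tail $\{\theta_j(t)\}$, I would repeat the twice-integration-by-parts identity now at each time $t$: $\eigv_j\theta_j(t)=\int_{B_1}(-\Laplace_y)\big(\Errorrho_1(y,t)\big)\Xi_j\,dy$, so $\sum_j j^4\theta_j(t)^2 = \pi^{-4}\norm{\Laplace_y\Errorrho_1(\cdot,t)}_{L^2(B_1)}^2 \le C\norm{\Errorrho_1(\cdot,t)}_{C^{2}_y(B_1)}^2\to 0$; in particular $\{j^2\theta_j(t)\}\in\ell^2$ for each $t$ with $\ell^2$-norm tending to $0$, and a fortiori $\sum_j\theta_j(t)^2\le\sum_j j^4\theta_j(t)^2\to 0$. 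This shows $\norm{\zz(t)}_{\ell^2}\to 0$. For $\dot\zz$, one uses the equations: $\dot\Errorrho_2$, $\ddot\ErrorR$ are controlled by $|\pttR|,|\dddot R|\to 0$ (via the change of variables and \eqref{eq-uRZ-initial}–\eqref{eq-uRz-z}), $\dot\ErrorR=\ptR\to 0$, and $\dot\theta_k$ is read off from \eqref{eq-dot-ck}: $\dot\theta_k=-\bkappa\eigv_k\theta_k-\coeff_k\dot\Errorrho_2+\Pi_k$; since $\{\eigv_k\theta_k\}\in\ell^2$ with norm $\to 0$, $\{\coeff_k\}\in\ell^2$, $\dot\Errorrho_2\to 0$, and $\{\Pi_k\}=\{\int_{B_1}\Pi\,\Xi_k\}$ has $\ell^2$-norm bounded by $\norm{\Pi}_{L^2(B_1)}$ which is quadratically small in the $C^{2+2\alpha}$ norm of $(\Errorrho_1,\Errorrho_2,\ErrorR,\dot\ErrorR)$ by Lemma \ref{l-verify-N(W,P)}, we obtain $\norm{\dot\zz(t)}_{\ell^2}\to 0$. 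Alternatively, and more cleanly, once $\parl_t\brho\to 0$ uniformly (from \eqref{eq-app-2-1} and the decay of all terms on its right-hand side) one gets $\dot\theta_k=\int_{B_1}\parl_t\Errorrho_1\,\Xi_k\,dy$ and hence $\norm{\dot\zz}_{\ell^2}\to 0$ directly; this is the route already indicated in Step 7 of Section \ref{s-exponential convergence}.

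The main obstacle is the bookkeeping in part (b): one must be careful that the $C^{2+2\alpha}$ decay genuinely controls the relevant $\ell^2$ (and weighted $\ell^2$) norms uniformly, in particular that the nonlinear remainders $\Pi_k,\Phi,\Psi$ do not spoil the tail estimate — this is where the quadratic bounds $|F^0|+|\Phi^0|+|\Psi^0|\le O(\norm\zz^2)$ and $|\bPi^1|+|\bPhi^1|+|\bPsi^1|\le O(\norm\zz)$ from Lemma \ref{l-verify-N(W,P)} are essential, together with $\{\coeff_k\},\{\omega_k/k\}\in\ell^2$. The rest of the estimates reduce, as noted, to the integration-by-parts identity relating $\eigv_j\theta_j$ to the Laplacian of $\Errorrho_1$ and to Bessel's inequality; the structure closely parallels \cite[Proposition 9.2]{Lai2023}, the only genuinely new ingredient being the extra terms in \eqref{eq-app-2-3}/\eqref{eq-FGH-H} coming from the finite liquid volume, which are smooth in $(R,\ptR,\pttR)$ because $\bR=(R^3+\bV)^{1/3}\ge\bV^{1/3}>0$ stays bounded away from zero, so they contribute no new singular behavior.
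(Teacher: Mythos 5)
Your argument is correct and is essentially the route the paper takes (the paper only sketches this lemma as an adaptation of \cite[Proposition 9.2]{Lai2023}): the weighted $\ell^2$ bounds come from Green's identity with the radial Dirichlet eigenfunctions (both $\Errorrho_1$ and $\Xi_j$ vanish at $|y|=1$, so $\eigv_j\theta_j$ are the coefficients of $-\Laplace_y\Errorrho_1$) plus Bessel's inequality, and the time decay is transferred from the $C^{2+2\alpha}$ asymptotic stability of Theorem \ref{t-main result}(i) together with $\parl_t\brho\to0$ and the boundedness of the extra finite-volume terms, exactly as indicated before the lemma and in Step 7. The only cosmetic slip is invoking \eqref{eq-main result-condition} in the initial-data part, which needs no smallness; the memberships follow from $\gdens_0\in C^{2+2\alpha}_r$ alone.
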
  

Then, we analyze the spectrum of the operator $\Linearpart$ on the space $\ell^2$ using the Laplace transform. Given a function $f(t)$ defined for $t\ge0$, we denote the Laplace transform by $\tilde{f}(\tau) = \int_0^\infty e^{-t\tau}f(t)dt$.
\begin{proof}[Proof of Proposition \ref{p-spectrum}]
Note that the linear system $\dot\zz = \Linearpart\zz$ is equivalent to the linear part of system \eqref{eq-IBVP}
\begin{numcases}
{}\parl_t\Errorrho_{1}=\bkappa\Laplace_y\Errorrho_{1}-\Paren{1-{\Adi}^{-1}}\dot\Errorrho_{2},\quad 0\le y\le1,\quad\Errorrho_{1}(1,t)=0,&$t>0$,\label{eq-app-3-1}\\
\ptErrorR=-\bkappa\ssr\ssd^{-1}\parl_y\Errorrho_{1}(1,t)-\ssr\Adi^{-1}\ssd^{-1}\dot\Errorrho_{2}/3,&$t>0$,\label{eq-app-3-2}\\
\Errorrho_{2}=\Paren{\GASCONST\Tconst}^{-1}(-A\ErrorR+B\dot\ErrorR+\ldens\tR\ddot\ErrorR),&$t>0$,\label{eq-app-3-3}
\end{numcases} 
where 
\begin{equation*}
A=2\sigma/\ssr^2+2\barsigma\ssr^2/\ssor^{4}\ \text{and}\  B=4\mu_l\bV/\ssr(\ssr^3+\bV).
\end{equation*}
Similar to the proof as in Proposition \ref{p-infinite-ODE}, substituting the decomposition \eqref{eq-u-expansion} into \eqref{eq-app-3-1} and testing by $\Xi_k$, it holds $\dot\theta_k(t)=-\bkappa\eigv_k\theta_k(t)-\coeff_k\dot\Errorrho_{2}(t)$.
Taking the Laplace transform yields
\begin{equation}\label{eq-app-3-4}
\widetilde{\theta}_k(\tau)=\frac{\theta_k(0)+\coeff_k\Errorrho_{2}(0)}{\bkappa\eigv_k+\tau}-\frac{\coeff_k\tau}{\bkappa\eigv_k+\tau}\widetilde\Errorrho_{2}(\tau).
\end{equation} 
Again by the Laplace transform, \eqref{eq-app-3-2} and \eqref{eq-app-3-3} become
\begin{numcases}
{}-\ErrorR(0)+\tau\widetilde\ErrorR(\tau)=\sum_{j=1}^\infty\widetilde\theta_j(\tau)\omega_j-3^{-1}\ssr\Adi^{-1}\ssd^{-1}(-\Errorrho_{2}(0)+\tau\widetilde\Errorrho_{2}(\tau)),\label{eq-app-3-5}\\
(\ldens\tR\tau^2+B\tau-A)\widetilde\ErrorR(\tau)-\GASCONST\Tconst\widetilde\Errorrho_{2}(\tau)=\ldens\tR(\ptErrorR(0)+\tau\ErrorR(0)).\label{eq-app-3-6}
\end{numcases} 
%Taking Laplace transform of \eqref{eq-app-3-3}, we derive $\GASCONST \Tconst \widetilde \Errorrho_{2}(\tau) = -A \widetilde R(\tau) + B \Paren{-\ErrorR(0) + \tau \widetilde R(\tau) } + \ldens\tR \Paren{-\ptErrorR(0) - \tau\ErrorR(0) + \tau^2\widetilde R(\tau)}$ 
Substituting \eqref{eq-app-3-4} and \eqref{eq-app-3-6} into  \eqref{eq-app-3-5} and using  $\coeff_j\omega_j=\frac{2(\Adi-1)\bkappa\ssr}{\Adi\ssd}$, we obtain
\begin{align*}
-\ErrorR(0)+\tau\widetilde\ErrorR(\tau)
%&= \sum_{k=1}^\infty \Bracket{\frac{\theta_k(0) + \coeff_k\Errorrho_{2}(0)}{\bkappa\eigv_k+\tau} - \frac{\coeff_k\tau}{\bkappa\eigv_k+\tau}  \widetilde \Errorrho_{2}(\tau)}\omega_k - \frac{\ssr}{3\Adi\ssd} (-\Errorrho_{2}(0) + \tau\widetilde \Errorrho_{2}(\tau))\\
%={}& \Bracket{\sum_{k=1}^\infty \frac{\theta_k(0) + \coeff_k\Errorrho_{2}(0)}{\bkappa\eigv_k+\tau}\omega_k + \frac{\ssr}{3\Adi\ssd} \Errorrho_{2}(0)}-\Paren{\sum_{k=1}^\infty \frac{\coeff_k \omega_k}{\bkappa\eigv_k+\tau} + \frac{\ssr}{3\Adi\ssd}}\tau\widetilde \Errorrho_{2}(\tau)\\
={}&-\Paren{\GASCONST\Tconst}^{-1}\Bracket{(\ldens\tR\tau^2+B\tau-A)\tau\widetilde\ErrorR(\tau)-\ldens\tR\tau(\ptErrorR(0)+\tau\ErrorR(0))}\\
&\cdot\Paren{\sum_{k=1}^\infty\frac{\coeff_k\omega_k}{\bkappa\eigv_k+\tau}+\frac{\ssr}{3\Adi\ssd}}+\Bracket{\sum_{k=1}^\infty\frac{\theta_k(0)+\coeff_k\Errorrho_{2}(0)}{\bkappa\eigv_k+\tau}\omega_k+\frac{\ssr}{3\Adi\ssd}\Errorrho_{2}(0)}.
\end{align*}
Therefore, we obtain $\tau\widetilde\ErrorR(\tau)\mathbb{M}(\tau)=\mathbb{S}(\tau)$, where $\mathbb{M}(\tau)$ is defined in \eqref{eq-meromorphic-function} and 
\begin{align*}
\mathbb{S}(\tau)={}&4\pi\frac{\ssd}{\ssr}\left[\frac{\ldens\tR\tau}{\GASCONST\Tconst}\Paren{\sum_{k=1}^\infty\frac{\coeff_k\omega_k}{\bkappa\eigv_k+\tau}+\frac{\ssr}{3\Adi\ssd}}(\ptErrorR(0)+\tau\ErrorR(0))\right.\\
&\qquad\quad+\left.\sum_{k=1}^\infty\frac{\theta_k(0)+\coeff_k\Errorrho_{2}(0)}{\bkappa\eigv_k+\tau}\omega_k+\frac{\ssr}{3\Adi\ssd}\Errorrho_{2}(0)+\ErrorR(0)\right] 
\end{align*}
is analytic for all $\tau\in\mathbb{C}\setminus\{-\pi^2\bkappa j^2:j=1,2,\cdots\}$ since $-\bkappa\eigv_j=-\pi^2\bkappa j^2$. Then, \eqref{eq-app-3-4} and \eqref{eq-app-3-6} yield that $\widetilde\zz=(\widetilde\Errorrho_{2}(\tau),\widetilde\ErrorR(\tau),\widetilde{\ptErrorR}(\tau),\widetilde{\theta}_1(\tau),\widetilde{\theta}_2(\tau),\cdots)^\top=(\Linearpart-\tau I)^{-1}\widetilde\zz(0)$ for $\tau\in\mathbb{C}\setminus\{ -\pi^2\bkappa j^2:j=1,2,\cdots\}$ satisfying $\tau \mathbb{M}(\tau)\neq0$. 

To estimate the upper bound of $\sup\{\operatorname{Re}(\lambda):\lambda\in\operatorname{sp}(\Linearpart)\setminus\{0\}\}$, we rewrite 
\begin{equation*}
\mathbb{M}(\lambda)=4\pi\frac{\ssd}{\ssr}+\frac{1}{\GASCONST\Tconst}\Paren{\sum_{k=1}^\infty\frac{1}{\Adi}\frac{8(\Adi-1)\pi\bkappa}{\bkappa\pi^2 k^2+\lambda}+\frac{4\pi}{3\Adi}}\Paren{C\lambda^2+B\lambda -A},
\end{equation*}
where $C=\ldens(\ssr-{\ssr^2}/{\ssor})$. The remaining proof of the lemma is a consequence of Lemma \ref{l-real-part} below. 
\end{proof} 
\begin{lemma}\label{l-real-part}
There exists a constant $\varpi>0$ such that $x<-2\varpi$ for all complex roots $\lambda=x+iy$ to the function $\mathbb{M}$ in \eqref{eq-meromorphic-function}.
The constant $\varpi$ can be chosen as 
\begin{subnumcases}
{\varpi=}
\frac{1}{2}\min\Brace{\Theta_2\pi^2\bkappa,\max\Brace{\frac{B}{2C},\min\Brace{\Theta_1\pi^2\bkappa,\sqrt{\frac{K}{2}}}}},&$B^2\le 4KC^2$,\nonumber\\
\frac{1}{2}\min\Brace{\Theta_2\pi^2\bkappa,\frac{B-\sqrt{B^2-4KC^2}}{2C}},&$B^2>4KC^2$,\nonumber
\end{subnumcases} 
where the constants $\Theta_1$ and $\Theta_2$ are defined in \eqref{eq-Theta-1} and \eqref{eq-Theta-2}, respectively. Moreover, $K=2\GASCONST\Tconst\ssd\ssor/[\ldens\ssr^2(\ssor-\ssr)]$.
\end{lemma}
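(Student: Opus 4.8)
\textbf{Proof proposal for Lemma \ref{l-real-part}.}

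The plan is to show that every root $\lambda=x+iy$ of $\mathbb{M}$ must have negative real part bounded away from $0$, by combining two complementary mechanisms: a ``pole barrier'' coming from the meromorphic structure of $\mathbb{M}$ near $-\pi^2\bkappa$, and a ``quadratic-factor'' analysis of the bracket $C\lambda^2+B\lambda-A$ in \eqref{eq-meromorphic-function}. First I would rewrite $\mathbb{M}(\lambda)=4\pi\ssd/\ssr+\Paren{\GASCONST\Tconst}^{-1}\mathbb{D}(\lambda)\Paren{C\lambda^2+B\lambda-A}$, where $\mathbb{D}(\lambda)=\frac{4\pi}{3\Adi}+\sum_{k\ge1}\frac{8(\Adi-1)\pi\bkappa}{\Adi(\pi^2\bkappa k^2+\lambda)}$ is a Herglotz-type function: on $\operatorname{Re}\lambda>-\pi^2\bkappa$ it is analytic, and when $\lambda$ is real and $\lambda>-\pi^2\bkappa$ it is a strictly positive, strictly decreasing function of $\lambda$ (its derivative is a negative sum). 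Since $A=2\sigma/\ssr^2+2\barsigma\ssr^2/\ssor^4>0$, the quantity $C\lambda^2+B\lambda-A$ is negative at $\lambda=0$; and since $\mathbb{M}(0)=4\pi\ssd/\ssr+\Paren{\GASCONST\Tconst}^{-1}\mathbb{D}(0)(-A)$, one checks via \eqref{eq-equi-eq} (the equilibrium relation $\ssd=\frac{2}{\GASCONST\Tconst}(\sigma/\ssr+\barsigma/\ssor)$) that $\mathbb{M}(0)>0$, which is consistent with $\lambda=0$ being a removable point rather than a root of $\mathbb{M}$ — this is the ``Case 3'' sign computation already referenced in the main text.

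Next I would carry out the root localization in the half-plane $\operatorname{Re}\lambda>-\pi^2\bkappa$. The key estimate is a lower bound $\operatorname{Re}\mathbb{M}(\lambda)>0$ on a suitable region. Writing $\lambda=x+iy$ with $x>-\pi^2\bkappa$, one has $\operatorname{Re}\frac{1}{\pi^2\bkappa k^2+\lambda}=\frac{\pi^2\bkappa k^2+x}{(\pi^2\bkappa k^2+x)^2+y^2}>0$, so $\operatorname{Re}\mathbb{D}(\lambda)>0$ throughout this half-plane; more importantly $|\mathbb{D}(\lambda)|$ is comparable to $\mathbb{D}(x)$ when $|y|$ is not too large, and one needs the sign of $\operatorname{Re}\Bracket{\mathbb{D}(\lambda)(C\lambda^2+B\lambda-A)}$. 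The clean route is: if $\lambda$ is a root, then $\mathbb{D}(\lambda)(C\lambda^2+B\lambda-A)=-4\pi\ssd\GASCONST\Tconst/\ssr<0$ is real and negative. Split into cases according to the sign of $B^2-4KC^2$, where $K=2\GASCONST\Tconst\ssd\ssor/[\ldens\ssr^2(\ssor-\ssr)]$ is defined so that $\mathbb{D}(0)\cdot(\text{something})$ reproduces $A/C=K$ up to the corrections handled by \eqref{eq-equi-eq}; concretely the threshold $B^2=4KC^2$ is exactly where the quadratic $C\lambda^2+B\lambda-A$ (with $A$ replaced by its ``effective'' value dictated by the $\mathbb{D}(0)$ factor) transitions between complex and real roots. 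In the case $B^2>4KC^2$ the relevant quadratic has two negative real roots, the larger being $\frac{-B+\sqrt{B^2-4KC^2}}{2C}$ i.e. at distance $\frac{B-\sqrt{B^2-4KC^2}}{2C}$ left of the origin, and one argues that no root of $\mathbb{M}$ can lie to the right of (a fixed fraction of) that point because there the bracket is already too negative for $\operatorname{Re}\mathbb{D}\cdot(\cdots)$ to equal the fixed negative constant while $\operatorname{Re}\lambda$ is large. In the case $B^2\le 4KC^2$ the quadratic factor never vanishes and a direct modulus/real-part inequality on $C\lambda^2+B\lambda-A$ combined with monotonicity of $\mathbb{D}$ on the real axis pins the real part below $-\min\{B/2C,\ldots\}$; the auxiliary constants $\Theta_1,\Theta_2$ in \eqref{eq-Theta-1}, \eqref{eq-Theta-2} absorb the quantitative slack coming from the infinite sum and from the imaginary part $y$. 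Throughout, the pole at $-\pi^2\bkappa$ forces any root with $\operatorname{Re}\lambda$ very close to $-\pi^2\bkappa$ to be excluded because there $|\mathbb{M}(\lambda)|\to\infty$, which is where the factor $\Theta_2\pi^2\bkappa$ (with $\Theta_2<1$) enters as the outermost $\min$.

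The main obstacle I anticipate is the estimate for complex (non-real) roots: on the real axis everything is monotone and the sign analysis is transparent, but for $y\neq0$ one must control $\operatorname{Im}\Bracket{\mathbb{D}(\lambda)(C\lambda^2+B\lambda-A)}=0$ simultaneously with the real-part equation, and the infinite sum $\mathbb{D}$ does not have a closed form, so one cannot simply solve. The way around this is to use only the two robust facts about $\mathbb{D}$ — namely $\operatorname{Re}\mathbb{D}(\lambda)>0$ and $|\operatorname{Im}\mathbb{D}(\lambda)|$ controlled by $|y|\,\mathbb{D}'(x)$-type bounds on the region $\operatorname{Re}\lambda\ge-\Theta_2\pi^2\bkappa$ — and feed them into an argument that the argument (phase) of $C\lambda^2+B\lambda-A$ cannot be compensated by that of $\mathbb{D}(\lambda)$ to land on the negative real axis unless $\operatorname{Re}\lambda$ is already sufficiently negative. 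This is essentially a winding-number / Rouché-free phase-counting argument on the boundary of the region $\{\operatorname{Re}\lambda\ge-2\varpi\}\setminus\{|\lambda|\le N\}$, letting $N\to\infty$ and using that $\mathbb{M}(\lambda)\to 4\pi\ssd/\ssr+\Paren{\GASCONST\Tconst}^{-1}\mathbb{D}(\infty)C\lambda^2$ dominates (so no roots at infinity in the closed right region). I would present the real-axis case in full detail (it gives the explicit formula for $\varpi$) and handle the complex case by the phase estimate, invoking the Gearhart–Pr\"uss-type resolvent bound only afterwards in Proposition \ref{p-invariant space} rather than here.
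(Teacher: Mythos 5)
Your starting point is the same as the paper's: at a root, $\mathbb{D}(\lambda)\,(C\lambda^2+B\lambda-A)=-4\pi\ssd\GASCONST\Tconst/\ssr$ is a fixed negative real number, and the constants $K$, $\Theta_1$, $\Theta_2$ must come out of the equilibrium identity \eqref{eq-equi-eq}. But the core of the lemma — the quantitative exclusion of \emph{non-real} roots with real part near $0$ — is exactly the part you leave as a sketch, and the sketch does not close the gap. The paper handles it by writing out the real and imaginary parts of $\mathbb{M}(x+iy)=0$ explicitly and exploiting an elementary but decisive observation: for $y\neq 0$ the imaginary-part equation reads $\bigl(\tfrac{4\pi}{3\Adi}+E\bigr)(2Cx+B)+F\bigl[C(x^2+y^2)+A\bigr]=0$ with $E,F>0$, which \emph{immediately} forces $x<-B/(2C)$; the remaining bounds ($\Theta_1\pi^2\bkappa$, $\sqrt{K/2}$ for $0<y^2\le K/2$, and a separate argument via $|C\lambda^2+B\lambda-A|^2>C^2y^4$ for $y^2>K/2$) then come from feeding this back into the real-part equation and using $2\GASCONST\Tconst\ssd/\ssr<KC+A<3\GASCONST\Tconst\ssd/\ssr$. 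Your proposed replacement — a phase-compensation / winding-number argument using only $\operatorname{Re}\mathbb{D}>0$ and a bound on $|\operatorname{Im}\mathbb{D}|$ — is not carried out, and it is not clear it can produce the explicit constants in the statement (it could at best show absence of roots in a qualitative half-plane); the contributions $B/(2C)$, $\Theta_1\pi^2\bkappa$, $\sqrt{K/2}$ in the formula for $\varpi$ come precisely from the complex-root cases, so your plan to "present the real-axis case in full detail (it gives the explicit formula for $\varpi$)" is incorrect — the real-axis case only yields the $\Theta_2\pi^2\bkappa$ and $\bigl(B-\sqrt{B^2-4KC^2}\bigr)/(2C)$ pieces.

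Two further points are wrong as stated. First, the "pole barrier": $|\mathbb{M}(\lambda)|\to\infty$ only at the point $\lambda=-\pi^2\bkappa$ itself, not along the vertical line $\operatorname{Re}\lambda=-\pi^2\bkappa$, so the pole does not exclude roots with real part near $-\pi^2\bkappa$ — indeed the paper later shows that for small liquid volume there \emph{is} a real root in $(-\pi^2\bkappa,0)$. The factor $\Theta_2\pi^2\bkappa$ does not enter as a pole-avoidance device; it is produced by the real-root contradiction argument (assume $x\ge-\Theta'\pi^2\bkappa$ and $Cx^2+Bx-A\ge -KC-A$, then \eqref{eq-equi-eq} forces $\Theta'>\Theta_2$). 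Second, your identification of the threshold $B^2=4KC^2$ is only heuristic; in the paper it arises because the auxiliary assumption $x>\bigl(-B+\sqrt{B^2-4KC^2}\bigr)/(2C)$ is what guarantees $Cx^2+Bx-A\ge -KC-A$ on the real axis, and that inequality is the input to the $\Theta_2$ contradiction — none of which appears in your proposal. To repair the argument you would need to actually derive and manipulate the coupled real/imaginary-part equations (or an equivalent quantitative substitute), as the paper does; the qualitative Herglotz-type properties of $\mathbb{D}$ alone are not enough.
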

\begin{proof} 
Let $\lambda=x+iy$ be a complex root of $\mathbb{M}$. Substituting $\lambda=x+iy$ into \eqref{eq-meromorphic-function} and splitting the real and imaginary parts of $\mathbb{M}$, we obtain
\begin{subnumcases}
{}\frac1{\GASCONST\Tconst}\left\lbrace\Bracket{\frac{4\pi}{3\Adi}+\sum_{j=1}^\infty\frac{1}{\Adi}\frac{8(\Adi-1)\pi^3\bkappa^2j^2}{\Paren{\pi^2\bkappa j^2+x}^2+y^2}}\Bracket{C\Paren{x^2-y^2}+Bx-A}\right.\nonumber\\
\qquad\ \ \ \left.+\sum_{j=1}^\infty\frac{1}{\Adi}\frac{8(\Adi-1)\pi\bkappa}{\Paren{\pi^2\bkappa j^2+x}^2+y^2}\Bracket{(Cx+B)(x^2+y^2)-Ax}\right\rbrace+4\pi\frac{\ssd}{\ssr}=0,\label{eq-Re-Q}\\
\frac{i}{\GASCONST\Tconst}\left\lbrace\Bracket{\frac{4\pi}{3\Adi}+\sum_{j=1}^\infty\frac{1}{\Adi}\frac{8(\Adi-1)\pi\bkappa\Paren{\pi^2\bkappa j^2+x}}{\Paren{\pi^2\bkappa j^2+x}^2+y^2}}\Paren{2Cx+B}y\right.\nonumber\\
\qquad\ \ \ \left.-\sum_{j=1}^\infty\frac{1}{\Adi}\frac{8(\Adi-1)\pi\bkappa}{\Paren{\pi^2\bkappa j^2+x}^2+y^2}\Bracket{C\Paren{x^2-y^2}+Bx-A}y\right\rbrace=0.\label{eq-Im-Q}
\end{subnumcases} 
To simplify the above formulas, we will denote
$D=\frac{4\pi}{3\Adi}+\sum_{j=1}^\infty\frac{1}{\Adi}\frac{8(\Adi-1)\pi\bkappa\Paren{\pi^2\bkappa j^2+x}}{\Paren{\pi^2\bkappa j^2+x}^2+y^2},E=\sum_{j=1}^\infty\frac{1}{\Adi}\frac{8(\Adi-1)\pi^3\bkappa^2j^2}{\Paren{\pi^2\bkappa j^2+x}^2+y^2}$, and $F=\sum_{j=1}^\infty\frac{1}{\Adi}\frac{8(\Adi-1)\pi\bkappa}{\Paren{\pi^2\bkappa j^2+x}^2+y^2}$. 

\noindent \textbf{Case 1: $0<y^2\le K/2$.}  If $y\neq0$,  eliminating $y$ for the imaginary part \eqref{eq-Im-Q} yields
\begin{equation}\label{eq-Im-Q-1}
0=\Paren{\frac{4\pi}{3\Adi}+E}\Paren{2Cx+B}+F\Bracket{C(x^2+y^2)+A}.
\end{equation}  
From $E>0$ and $F\Bracket{C(x^2+y^2)+A}>0$, it holds 
\begin{equation}\label{eq-Im-Q-2}
x<-\frac{B}{2C}<0, 
\end{equation} 
provided $y\neq 0$. Splitting $F$ in \eqref{eq-Im-Q-1} and substituting into the real part  \eqref{eq-Re-Q}, we derive
\begin{align}
&\Paren{\frac{4\pi}{3\Adi}+E}\Brace{C(x^2-y^2)+Bx-A-\frac{\Paren{2Cx+B}\Bracket{(Cx+B)(x^2+y^2)-Ax}}{C(x^2+y^2)+A}}\nonumber\\
={}&-4\pi\GASCONST\Tconst\frac{\ssd}{\ssr}.\label{eq-Re-Q-1}
\end{align}  
Denote the term in the big brace by $\omega$, and a straightforward calculation shows that 
\begin{align*}
&\Bracket{C(x^2+y^2)+A}\omega\\
={}&-C^2(x^2+y^2)^2-B\Paren{2Cx+B}(x^2+y^2)+A\Paren{2Cx^2+2Bx-2Cy^2}-A^2\\
>{}&-C^2(x^2+y^2)^2-2AC(x^2+y^2)-A^2=-\Bracket{C(x^2+y^2)+A}^2.
\end{align*}  
In the above, we have used \eqref{eq-Im-Q-2} to deduce $2Cx^2+2Bx=x\Paren{2Cx+2B}>-x(2Cx)$. Then, one has $\omega>-C(x^2+y^2)-A$, and \eqref{eq-Re-Q-1} yields
\begin{equation}\label{eq-Re-Q-2}
\frac1{\GASCONST\Tconst}\Paren{\frac{4\pi}{3\Adi}+E}\Bracket{C(x^2+y^2)+A}> 4\pi\frac{\ssd}{\ssr}.
\end{equation} 

Assume that $x\ge-\Theta\pi^2\bkappa$, where $\Theta\in(0,1)$ will be chosen later. It is clear that $x\ge-\Theta\pi^2\bkappa j^2$ for all $j\ge 1$. By equilibrium equation \eqref{eq-equi-eq} and recalling $\ssr<\ssor$, it holds
\begin{equation}\label{eq-KC+A}
\frac{2\GASCONST\Tconst\ssd}{\ssr}<KC+A=\frac{2\GASCONST\Tconst\ssd}{\ssr}+\frac{2\sigma}{\ssr^2}+\frac{2\barsigma\ssr^2}{\ssor^{4}}<\frac{3\GASCONST\Tconst\ssd}{\ssr}.
\end{equation}
We further assume that 
\begin{equation}\label{eq-ass-1}
x\ge-\sqrt{K-y^2},
\end{equation} 
if $0<y^2\le K$. Since \eqref{eq-Im-Q-2} implies $x<0$, we have $C(x^2+y^2)+A\le KC+A$, if $0<y^2\le K$. Then \eqref{eq-Re-Q-2} yields 
\begin{equation*}
0>-\frac1{\GASCONST\Tconst}\Paren{\frac{4\pi}{3\Adi}+\frac{8(\Adi-1)}{\pi\Adi}\frac1{(1-\Theta)^2}\sum_{j=1}^\infty j^{-2}}\Paren{KC+A}+4\pi\frac{\ssd}{\ssr},
\end{equation*}
where we have substituted $x\ge-\Theta\pi^2\bkappa j^2$.  Upon simplification the above inequality, one has %$3\GASCONST \Tconst \frac{\ssd}{\ssr} < \Bracket{\frac1\Adi + \Paren{1-\frac1\Adi} \frac1{(1-\Theta)^2}} \Paren{KC+A}$, and it holds
\begin{equation}\label{eq-Theta-1}
\Theta>\Theta_1\triangleq 1-\sqrt{\Paren{\Adi-1}\Big/\Paren{\frac{3\GASCONST\Tconst\ssd\Adi}{\ssr(KC+A)}-1}}\in(0,1).
\end{equation}

We simply choose $\Theta =\Theta_1$ to reach a contradiction to \eqref{eq-ass-1}, since we will deduce $\Theta>\Theta_1$. Thus, we have for $0<y^2\le K$ that $x<-\min\Brace{\Theta_1\pi^2\bkappa,\sqrt{K-y^2}}$. Combining \eqref{eq-Im-Q-2}, we have
\begin{equation}\label{eq-ass-2}
x<-\max\Brace{\frac{B}{2C},\min\Brace{\Theta_1\pi^2\bkappa,\sqrt{\frac{K}{2}}}},\ \text{if}\ 0<y^2\le \frac{K}{2}.
\end{equation}
\noindent \textbf{Case 2: $y^2> K/2$.} We eliminate $D$ in the imaginary part \eqref{eq-Im-Q} and substitute it into the real part \eqref{eq-Re-Q}
\begin{equation}\label{eq-ass-3}
4\pi\ssd\ssr^{-1}\GASCONST\Tconst\Paren{2Cx+B}=-F\module{C\lambda^2+B\lambda-A}^2.
\end{equation} 
Note that $A,B,C>0$ and therefore $\module{C\lambda^2+B\lambda-A}^2=C^2\module{\lambda-\lambda_1}^2\module{\lambda-\lambda_2}^2$ has real roots $\lambda_{i}\in\mathbb{R},i=1,2$. Then, we have $\module{C\lambda^2+B\lambda-A}^2>C^2y^4$ since $|\lambda-\lambda_{i}|=|x+iy-\lambda_{i}|>|iy|$. Using $y^2>K/2$, we further derive
\begin{equation}\label{eq-ass-4}
4\pi\ssd\ssr^{-1}\GASCONST\Tconst\Paren{2Cx+B}<-C^2\Paren{\frac{K}{2}}^2\sum_{j=1}^\infty\frac{1}{\Adi}\frac{8(\Adi-1)\pi\bkappa}{\pi^4\bkappa^2j^4+K/2}. 
\end{equation}  
Then, recalling the definitions of $C,K$ and $\bkappa$, we have from \eqref{eq-ass-4} that  
\begin{equation}\label{eq-ass-5}
x%<-\frac{B}{2C}-\Bracket{\frac{\GASCONST\Tconst\ssd\ssor}{\pi^4\bkappa\ldens\ssr^2(\ssor-\ssr)}}\Paren{1-\frac1\Adi}\Paren{\frac{\pi^4}{90}+O\Paren{\Paren{\frac1{\pi^2\bkappa}\sqrt{K/2}}^{3/2}}}
<-\frac{B}{2C}-\frac{(\Adi-1)\GASCONST\Tconst}{\pi^4\ldens\Thermal}\frac{\ssd^2}{1-\ssr/\ssor}\Paren{\sum_{j=1}^\infty\frac{1}{j^4+\frac{K}{2\pi^4\bkappa^2}}}<-\frac{B}{2C},\ \text{if}\ y^2>\frac{K}{2}.
\end{equation}  
\noindent \textbf{Case 3: $y=0$.} 
We claim that $x<0$. Suppose $x\ge 0$, and we rewrite $\mathbb{M}(x)$ as
\begin{align*}
\frac{\pi x(Cx+B)}{\GASCONST\Tconst\Adi}\Paren{\frac{4}{3}+\sum_{j=1}^\infty\frac{8(\Adi-1)\bkappa}{\pi^2\bkappa j^2+x}}+\Bracket{\frac{4\pi\ssd}{\ssr}-\frac{\pi A}{\GASCONST\Tconst\Adi}\Paren{\frac{4}{3}+\sum_{j=1}^\infty \frac{8(\Adi-1)\bkappa}{\pi^2\bkappa j^2+x}}}.
\end{align*} 
From $x\ge 0$, the first term is non-negative, and we apply \eqref{eq-equi-eq} to deduce that the term in the bracket is greater than
\begin{align*}
&\frac{8\pi}{\GASCONST\Tconst}\Paren{\frac{\sigma}{\ssr^2}+\frac{\barsigma}{\ssr\ssor}}-\frac{\pi}{\GASCONST\Tconst\Adi}\Paren{\frac{2\sigma}{\ssr^2}+\frac{2\barsigma\ssr^2}{\ssor^{4}}}\Paren{\frac{4}{3}+\frac{8(\Adi-1)\bkappa}{\pi^2\bkappa}\sum_{j=1}^{\infty}j^{-2}}\\
\ge{}&\frac{8\pi}{\GASCONST\Tconst}\Paren{\frac{\sigma}{\ssr^2}+\frac{\barsigma}{\ssr\ssor}}-\frac{1}{3}\frac{8\pi}{\GASCONST\Tconst}\Paren{\frac{\sigma}{\ssr^2}+\frac{\barsigma\ssr^2}{\ssor^{4}}}>0.
\end{align*}  
This contradicts $\mathbb{M}(x)=0$. To search for a negative upper bound, we assume that $x \ge -\Theta^\prime\pi^2\bkappa$, 
where $0<\Theta^\prime<1$ will be chosen. We further assume that 
\begin{equation}\label{eq-ass-6}
x>\frac{-B+\sqrt{B^2-4KC^2}}{2C},
\end{equation}
if $B^2>4KC^2$. In this case, $Cx^2+Bx-A\ge -KC-A$, and this inequality also holds when $B^2\le 4KC^2$.
Then, we have 
\begin{align*}
0=\mathbb{M}(x)%&= \frac1{\GASCONST\Tconst}\Paren{\frac{4\pi}{3\Adi}+\frac{8(\Adi-1)}{\pi\Adi}\sum_{j=1}^\infty\frac{\pi^2\bkappa}{\pi^2\bkappa j^2+x}}\Paren{Cx^2+Bx-A}+4\pi\frac{\ssd}{\ssr}\\
&>-\frac1{\GASCONST\Tconst}\Paren{\frac{4\pi}{3\Adi}+\frac{8(\Adi-1)}{\pi\Adi(1-\Theta^\prime)}\sum_{j=1}^\infty j^{-2}}\Paren{KC+A}+4\pi\frac{\ssd}{\ssr}\\
&=-\frac1{\GASCONST\Tconst}\Paren{\frac{4\pi}{3\Adi}+\frac{4\pi}3\Paren{1-\frac1\Adi}\frac1{1-\Theta^\prime}}\Paren{KC+A}+4\pi\frac{\ssd}{\ssr}.
\end{align*} 
Recalling \eqref{eq-KC+A}, it follows that $3\GASCONST\Tconst{\ssd}/{\ssr}<[\Adi^{-1}+\Paren{1-\Adi^{-1}}\Paren{1-\Theta^\prime}^{-1}]\Paren{KC+A}$, or equivalently,
\begin{equation}\label{eq-Theta-2}
\Theta^\prime>\Theta_2\triangleq1-\Paren{\Adi-1}\Big/\Paren{\frac{3\GASCONST\Tconst\Adi\ssd}{\ssr(KC+A)}-1}\in(0,1).
\end{equation}
Therefore, we reach a contradiction to \eqref{eq-ass-6} by choosing $\Theta^\prime=\Theta_2$. 

Summing up, in the case of $y=0$, $x<-\min\Brace{\Theta_2\pi^2\bkappa,\frac{B-\sqrt{B^2-4KC^2}}{2C}}$, if $B^2>4KC^2$. Otherwise, $x<-\Theta_2\pi^2\bkappa$. This, combined with the upper bounds \eqref{eq-ass-2}, \eqref{eq-ass-5} and the fact $\Theta_2>\Theta_1$ gives the upper bound.
%\begin{equation*}
%\begin{cases}
%\frac{1}{2}\max\Brace{-\max\Brace{\frac{B}{2C},\min\Brace{\Theta_1\pi^2\bkappa,\sqrt{\frac{K}{2}}}},-\Theta_2\pi^2\bkappa},&B^2-4KC^2\le 0,\\
%\frac{1}{2}\max\Brace{-\max\Brace{\frac{B}{2C},\min\Brace{\Theta_1\pi^2\bkappa,\sqrt{\frac{K}{2}}}},-\min\Brace{\Theta_2\pi^2\bkappa,\frac{B-\sqrt{B^2-4KC^2}}{2C}}},&B^2-4KC^2>0,
%\end{cases}
%\end{equation*}
\end{proof}

\section*{Acknowledgments}
C. Hao and S. Yang were partially supported by the NSF of China under Grant 12171460; C. Hao was additionally supported by the CAS Project for Young Scientists in Basic Research under Grant YSBR-031 and the National Key R\&D Program of China under Grant 2021YFA1000800. T. Luo was supported by a General Research Fund of Research Grants Council of the Hong Kong (Grant No. 11310023).


\begin{thebibliography}{10}
	
	\bibitem{Barber1992}
	B.~P. Barber and S.~J. Putterman.
	\newblock Light scattering measurements of the repetitive supersonic implosion
	of a sonoluminescing bubble.
	\newblock {\em Phys. Rev. Lett.}, 69:3839--3842, Dec 1992.
	
	\bibitem{Barber1994}
	B.~P. Barber, C.~C. Wu, R.~L\"ofstedt, P.~H. Roberts, and S.~J. Putterman.
	\newblock Sensitivity of sonoluminescence to experimental parameters.
	\newblock {\em Phys. Rev. Lett.}, 72:1380--1383, Feb 1994.
	
	\bibitem{Biro2000}
	Z.~Biro and J.~J.~L. Velazquez.
	\newblock Analysis of a free boundary problem arising in bubble dynamics.
	\newblock {\em SIAM J. Math. Anal.}, 32(1):142--171, 2000.
	
	\bibitem{Carr1981}
	J.~Carr.
	\newblock {\em Applications of centre manifold theory}, volume~35 of {\em
		Applied Mathematical Sciences}.
	\newblock Springer-Verlag, New York-Berlin, 1981.
	
	\bibitem{Chowdhury2020}
	S.~M. Chowdhury, L.~Abou-Elkacem, T.~Lee, J.~Dahl, and A.~M. Lutz.
	\newblock Ultrasound and microbubble mediated therapeutic delivery: Underlying
	mechanisms and future outlook.
	\newblock {\em Journal of Controlled Release}, 326:75--90, 2020.
	
	\bibitem{Gearhart1978}
	L.~Gearhart.
	\newblock Spectral theory for contraction semigroups on {H}ilbert space.
	\newblock {\em Trans. Amer. Math. Soc.}, 236:385--394, 1978.
	
	\bibitem{Hashmi2012}
	A.~Hashmi, G.~Yu, M.~Reilly-Collette, G.~Heiman, and J.~Xu.
	\newblock Oscillating bubbles: a versatile tool for lab on a chip applications.
	\newblock {\em Lab Chip}, 12:4216--4227, 2012.
	
	\bibitem{Koehne2013}
	M.~K\"ohne, J.~Pr\"uss, and M.~Wilke.
	\newblock Qualitative behaviour of solutions for the two-phase
	{N}avier-{S}tokes equations with surface tension.
	\newblock {\em Math. Ann.}, 356(2):737--792, 2013.
	
	\bibitem{Lai2023}
	C.-C. Lai and M.~I. Weinstein.
	\newblock Free boundary problem for a gas bubble in a liquid, and exponential
	stability of the manifold of spherically symmetric equilibria.
	\newblock {\em Arch. Ration. Mech. Anal.}, 247(5):Paper No. 100, 87, 2023.
	
	\bibitem{Lai2024a}
	C.-C. Lai and M.~I. Weinstein.
	\newblock Asymmetric deformations of a perturbed spherical bubble in an
	incompressible fluid.
	\newblock {\em Preprint, arXiv:2408.03787}, 2024.
	
	\bibitem{Lai2024}
	C.-C. Lai and M.~I. Weinstein.
	\newblock Thermal relaxation toward equilibrium and periodically pulsating
	spherical bubbles in an incompressible liquid.
	\newblock {\em Nonlinear Anal.}, 238:Paper No. 113397, 35, 2024.
	
	\bibitem{Landau2013}
	L.~D. Landau and E.~M. Lifshitz.
	\newblock {\em Statistical Physics: Volume 5}, volume~5.
	\newblock Elsevier, 2013.
	
	\bibitem{Leifer2003}
	I.~Leifer and I.~MacDonald.
	\newblock Dynamics of the gas flux from shallow gas hydrate deposits:
	interaction between oily hydrate bubbles and the oceanic environment.
	\newblock {\em Earth and Planetary Science Letters}, 210(3):411--424, 2003.
	
	\bibitem{Lozinski2007}
	A.~Lozinski and M.~V. Romerio.
	\newblock Motion of gas bubbles, considered as massless bodies, affording
	deformations within a prescribed family of shapes, in an incompressible fluid
	under the action of gravitation and surface tension.
	\newblock {\em Math. Models Methods Appl. Sci.}, 17(9):1445--1478, 2007.
	
	\bibitem{Prosperetti1991}
	A.~Prosperetti.
	\newblock The thermal behaviour of oscillating gas bubbles.
	\newblock {\em J. Fluid Mech.}, 222:587--616, 1991.
	
	\bibitem{Prosperetti2004}
	A.~Prosperetti.
	\newblock Bubbles.
	\newblock {\em Phys. Fluids}, 16(6):1852--1865, 2004.
	
	\bibitem{Pruess1984}
	J.~Pr\"{u}ss.
	\newblock On the spectrum of {$C\sb{0}$}-semigroups.
	\newblock {\em Trans. Amer. Math. Soc.}, 284(2):847--857, 1984.
	
	\bibitem{Shapiro2011}
	A.~M. Shapiro and M.~I. Weinstein.
	\newblock Radiative decay of bubble oscillations in a compressible fluid.
	\newblock {\em SIAM J. Math. Anal.}, 43(2):828--876, 2011.
	
	\bibitem{Shatah2008a}
	J.~Shatah and C.~Zeng.
	\newblock Geometry and a priori estimates for free boundary problems of the
	{E}uler equation.
	\newblock {\em Comm. Pure Appl. Math.}, 61(5):698--744, 2008.
	
	\bibitem{Siegel1999}
	M.~Siegel.
	\newblock Influence of surfactant on rounded and pointed bubbles in
	two-dimensional {S}tokes flow.
	\newblock {\em SIAM J. Appl. Math.}, 59(6):1998--2027, 1999.
	
	\bibitem{Xiong2021}
	R.~Xiong, R.~X. Xu, C.~Huang, S.~De~Smedt, and K.~Braeckmans.
	\newblock Stimuli-responsive nanobubbles for biomedical applications.
	\newblock {\em Chem. Soc. Rev.}, 50:5746--5776, 2021.
	
	\bibitem{Zhao2022}
	L.~Zhao and L.~Zou.
	\newblock The viscous damping of three dimensional spherical gas bubble inside
	unbounded compressible liquid.
	\newblock {\em Preprint, arXiv:2212.00299}, 2022.
	
\end{thebibliography}
\end{document}